\title[A Koebe distortion theorem for quasiconformal mappings in the Heisenberg group]{A Koebe distortion theorem for quasiconformal mappings\\ in the Heisenberg group}
\author[T.\ Adamowicz]{Tomasz Adamowicz}
\address{The Institute of Mathematics, Polish Academy of Sciences \\ ul. \'Sniadeckich 8, 00-656 Warsaw, Poland}
\email{tadamowi@impan.pl}
\author[K. F\"assler]{Katrin F\"assler}
\address{Department of Mathematics\\ University of Fribourg \\ Chemin du Mus\'{e}e 23,
CH-1700 Fribourg, Switzerland}
\email{katrin.faessler@unifr.ch}
\author[B.\ Warhurst]{Ben Warhurst}
\address{Institute of Mathematics, University of Warsaw\\ ul.Banacha 2, 02-097 Warsaw, Poland}
\email{B.Warhurst@mimuw.edu.pl}
\thanks{K.F.\ was supported by the Swiss National Science Foundation through the grant 161299 `Intrinsic rectifiability and mapping theory on the Heisenberg group'.}
\keywords{}
\date{\today}
\subjclass[2010]{(Primary) 30L10  (Secondary) 30C65, 30F45 }
\newcommand{\R}{\mathbb{R}}
\newcommand{\C}{\mathbb{C}}
\newcommand{\diam}{\operatorname{diam}}
\newcommand{\dist}{\operatorname{dist}}
\newcommand{\Om}{\Omega}
\newcommand{\Hei}{{\mathbb{H}}^{1}}
\newcommand{\finv}{{f^{-1}}}
\def\Barint_#1{\mathchoice
          {\mathop{\vrule width 6pt height 3 pt depth -2.5pt
                  \kern -8pt \intop}\nolimits_{#1}}%
          {\mathop{\vrule width 5pt height 3 pt depth -2.6pt
                  \kern -6pt \intop}\nolimits_{#1}}%
          {\mathop{\vrule width 5pt height 3 pt depth -2.6pt
                  \kern -6pt \intop}\nolimits_{#1}}%
          {\mathop{\vrule width 5pt height 3 pt depth -2.6pt
                  \kern -6pt \intop}\nolimits_{#1}}}
\numberwithin{equation}{section}
\theoremstyle{plain}
\newtheorem{thm}[equation]{Theorem}
\newtheorem{lemma}[equation]{Lemma}
\newtheorem{cor}[equation]{Corollary}
\newtheorem{proposition}[equation]{Proposition}
\theoremstyle{definition}
\newtheorem{definition}[equation]{Definition}
\theoremstyle{remark}
\newtheorem{remark}[equation]{Remark}
\definecolor{blau}{rgb}{0.1,0.0,0.9}
\newcommand{\blue}{\color{blau}}
\definecolor{lila}{rgb}{0.5,0.0,0.5}
\definecolor{Bcolor}{rgb}{0.5,0.0,0.0}
\newcounter{komcounter}
\numberwithin{komcounter}{section}
\newcommand{\kom}[1]{}
\renewcommand{\kom}[1]{{\bf \blue /#1/}}
\begin{document}

\begin{abstract}
We prove a Koebe distortion theorem for the average derivative of a quasiconformal mapping between domains in the sub-Riemannian Heisenberg group $\Hei$.
Several auxiliary properties of quasiconformal mappings between subdomains of
$\Hei$ are proven, including BMO-estimates for the logarithm of the Jacobian.
Applications of the Koebe theorem include diameter bounds for images of curves, comparison of integrals of the average derivative and the operator norm of the horizontal differential, as well as the study of quasiconformal densities and metrics in domains in $\Hei$. The theorems are discussed for the sub-Riemannian and the Kor\'anyi distances. This extends results due to Astala--Gehring, Astala--Koskela, Koskela and Bonk--Koskela--Rohde.
\end{abstract}

\maketitle
\tableofcontents
\vfill\eject
\section{Introduction}

The Koebe distortion theorem is a classical result in complex analysis that provides control over the absolute value of the derivative of a conformal function between domains in the complex plane \cite[Corollary 1.4]{pommerenke1975univalent}, see also \cite[Theorem 1.6]{MR777305}. K.\ Astala and F.\ Gehring \cite[Theorem 1.8]{MR777305} extended this result to the class of quasiconformal maps in $\mathbb{R}^n$, $n\geq 2$.

\begin{thm}[Astala, Gehring]\label{t:AstGehr} Let $n \geq 2$. For every $K\geq 1$, there exists a constant $1\leq c_K<\infty$ such that for every $K$-quasiconformal map $f:\Omega \to \Omega'$  between domains in
 $\mathbb{R}^n$ with $\Om\subsetneq \mathbb{R}^n$, it holds
\begin{equation}\label{eq:KoebeEst}
\frac{1}{c_K} \frac{d(f(x),\partial \Omega')}{d(x,\partial \Omega)}\leq a_f(x) \leq c_K \frac{d(f(x),\partial \Omega')}{d(x,\partial \Omega)}\quad \text{for all } x\in \Omega.
\end{equation}
Here,
\begin{displaymath}
a_f(x) := \exp \left(\frac{1}{n}\frac{1}{\mathcal{L}^n(B(x))}\int_{B(x)} \log J_f \;\mathrm{d}\mathcal{L}^n\right),\quad B(x):=B(x,d(x,\partial \Omega)),
\end{displaymath}
and $\mathcal{L}^n$ denotes Lebesgue measure on $\mathbb{R}^n$.
\end{thm}
 Quasiconformal mappings are not necessarily differentiable everywhere, but they belong to the Sobolev class $W_{loc}^{1,n}$. Consequently, Theorem \ref{t:AstGehr} is formulated not for the pointwise derivative, but for $a_f$. This is a natural geometric quantity which, for $n=2$ and $f$ conformal, agrees with $|f'(z)|$. Both $a_f$ and Theorem \ref{t:AstGehr}  have found various applications, for instance in connection with the global distortion properties of quasiconformal mappings \cite{MR1191747}, diameter bounds for
images of curves \cite{Koskela1994}, in the studies of conformal metrics \cite{bkr98}, and more recently related to harmonic quasiconformal mappings \cite{MR3430458}. We address counterparts of some of these results as well as their generalizations.

More precisely, the goal of this paper is to prove a Koebe distortion theorem for quasiconformal mappings in the Heisenberg group and to study several applications thereof. The Heisenberg group $\mathbb{H}^1$ endowed with a left-invariant sub-Riemannian metric $d_s$ has played an important role as a testing ground and motivational example for the extension of the theory of quasiconformal maps from Euclidean to more abstract metric spaces. This development can be seen from a series of papers and notes \cite{Pansu1, MR1317384, zbMATH00746069, HeinonenNotes, MR1654771}. Given the role of the sub-Riemannian Heisenberg group in the development of the theory of quasiconformality, and the wealth of quasiconformal mappings which can be constructed in this particular space by methods described in \cite{MR1317384,zbMATH00784029,zbMATH01640258,zbMATH06195912,Aus}, we consider $\mathbb{H}^1$ a natural non-Euclidean setting where it is worthwhile to study counterparts for $a_f$ and Koebe's theorem.

\begin{definition}\label{d:a_f}
For a quasiconformal map $f:\Omega \to \Omega'$ between domains $\Omega,\Omega'  \subsetneq \mathbb{H}^1$, we define
\begin{equation}\label{eq:def_a_f}
a_f(x):= \exp \left(\tfrac{1}{4} \left(\log J_f\right)_{B(x)}\right)
\end{equation}
with $B(x):=B(x,d(x,\partial \Omega))$ and $u_B:= \frac{1}{m(B)}\int_B u \;\mathrm{d}m$.
\end{definition}
Here and in the following, $B=B(x,r)$ denotes an open ball with center $x$ and radius $r>0$ with respect to a metric $d$ which will depend on the context. Moreover, $\lambda B:= B(x,\lambda r)$.
 The measure $m$ is a bi-invariant Haar measure on $\Hei$ as defined in Section \ref{ss:Heis}. A \emph{domain} is an open connected set. The constant $4$ which appears in \eqref{eq:def_a_f} is unrelated to the factor $4$ in Koebe's distortion theorem for conformal functions in the plane, but instead agrees with the Hausdorff dimension of the sub-Riemannian Heisenberg group. The following is the main theorem of this paper.

\begin{thm}\label{t:KoebeHeis}
For every $K\geq 1$, there exists a constant $1\leq c_K<\infty$ such that for every $K$-quasiconformal mapping $f:\Omega \to \Omega'$ between domains in $\mathbb{H}^1$ with $\Om\subsetneq \Hei$, it holds
\begin{equation}\label{eq:KoebeEst}
\frac{1}{c_K} \frac{d(f(x),\partial \Omega')}{d(x,\partial \Omega)}\leq a_f(x) \leq c_K \frac{d(f(x),\partial \Omega')}{d(x,\partial \Omega)}\quad \text{for all } x\in \Omega.
\end{equation}
\end{thm}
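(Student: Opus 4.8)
The plan is to route everything through the volume of the image $|f(B(x))|$, which bridges the analytically defined quantity $a_f(x)$ and the geometric ratio of boundary distances. Write $r=d(x,\partial\Omega)$ and $B(x)=B(x,r)$. Since $f$ is a homeomorphism of $\Omega$ onto $\Omega'$ satisfying the area formula $|f(E)|=\int_E J_f\,\mathrm{d}m$, the first step is to establish, with $c=c(K)$, the two-sided comparison
$$\frac{1}{c}\left(\frac{d(f(x),\partial\Omega')}{d(x,\partial\Omega)}\right)^4\le\frac{|f(B(x))|}{|B(x)|}\le c\left(\frac{d(f(x),\partial\Omega')}{d(x,\partial\Omega)}\right)^4.$$
Here I would invoke the distortion-of-balls estimates: since $f$ is quasiconformal, hence quasisymmetric, in $\mathbb{H}^1$, the image $f(B(x))$ has bounded eccentricity, i.e.\ it is trapped between concentric balls $B(f(x),\rho_-)$ and $B(f(x),\rho_+)$ with $\rho_+\le H(K)\rho_-$. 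Because $B(x)$ is the maximal ball in $\Omega$ centred at $x$, continuity of $f^{-1}$ forces its image to reach $\partial\Omega'$, so $\rho_-\le d(f(x),\partial\Omega')\le\rho_+$; together with $|B(f(x),\rho)|\asymp\rho^4$ and $|B(x)|\asymp r^4$ this yields the displayed comparison.

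The second step reduces the theorem to comparing the arithmetic and geometric means of $J_f$ over $B(x)$. By definition $a_f(x)^4=\exp\bigl((\log J_f)_{B(x)}\bigr)$, so Jensen's inequality for the convex function $\exp$ gives at once
$$a_f(x)^4=\exp\bigl((\log J_f)_{B(x)}\bigr)\le(J_f)_{B(x)}=\frac{|f(B(x))|}{|B(x)|},$$
which, combined with the upper half of the first step, produces the upper bound $a_f(x)\le c_K\,d(f(x),\partial\Omega')/d(x,\partial\Omega)$. The reverse direction is the crux: from the local BMO-estimate for $\log J_f$, whose norm is bounded in terms of $K$, I would deduce that $J_f$ is an $A_\infty$ weight with constants depending only on $K$, which is exactly the reverse-Jensen inequality
$$\frac{|f(B)|}{|B|}=(J_f)_B\le C(K)\exp\bigl((\log J_f)_B\bigr)$$
for balls $B$ compactly contained in $\Omega$; feeding this into the lower half of the first step gives the lower bound for $a_f(x)$.

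The main obstacle is to legitimise the reverse-Jensen step on the boundary-touching ball $B(x)$, since the reverse Hölder / $A_\infty$ property of $J_f$ is only available on balls whose concentric enlargement stays inside $\Omega$. I would circumvent this by applying the $A_\infty$ inequality on the shrunken ball $\tilde B=B(x,r/2)$, for which $2\tilde B\subset\Omega$, and then transferring the estimate back to $B(x)$: volume doubling controls $|B(x)|/|\tilde B|$, while the BMO bound on $\log J_f$ controls the discrepancy $|(\log J_f)_{B(x)}-(\log J_f)_{\tilde B}|$ by a constant multiple of $\|\log J_f\|_{*}\log 2\le C(K)$. Consequently $a_f$ computed on $B(x)$ and on $\tilde B$, together with the two volume ratios, are comparable with constants depending only on $K$, and the theorem follows with $c_K$ obtained by tracking these constants through the two steps.
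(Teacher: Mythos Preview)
Your plan has the right architecture --- bridge $a_f(x)$ and the boundary-distance ratio through $|f(B(x))|/|B(x)|$, use Jensen for one direction and an $A_\infty$-type reverse-Jensen for the other --- and this is essentially the paper's strategy as well. But Step~1 contains a genuine gap that the rest of the argument does not repair.

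You write that ``$f$ is quasiconformal, hence quasisymmetric, in $\mathbb{H}^1$'' and conclude that $f(B(x))$ has bounded eccentricity $\rho_+/\rho_-\le H(K)$. This is exactly what fails when $\Omega\subsetneq\mathbb{H}^1$: a quasiconformal map on a proper subdomain is only \emph{locally} quasisymmetric (the egg yolk principle gives quasisymmetry on $B(x,d(x,\partial\Omega)/(4\kappa+1))$, not on the maximal ball $B(x)$). Since $B(x)$ touches $\partial\Omega$, there is no eccentricity control on $f(B(x))$; indeed $\rho_+$ may even be infinite. Your topological observation that $\overline{f(B(x))}$ meets $\partial\Omega'$ gives $\rho_-\le d(f(x),\partial\Omega')\le\rho_+$, but without $\rho_+\lesssim\rho_-$ neither inequality in your displayed comparison follows. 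If you instead shrink to a ball $\tilde B=B(x,r/L)$ on which quasisymmetry holds, you recover bounded eccentricity of $f(\tilde B)$, but you lose the link to $\partial\Omega'$: there is no a priori reason why the outer radius $\tilde\rho_+$ of $f(\tilde B)$ should be comparable to $d(f(x),\partial\Omega')$. Establishing precisely this comparison (the paper's estimate $c_1^{-1}\le d_1/d_2\le c_1$) is the crux, and the paper obtains it by a Loewner modulus argument comparing the ring $B_1\setminus\overline{B_2}$ in the source with continua in the target; this ingredient is absent from your proposal.

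Two smaller points. First, the implication ``$\log J_f\in\mathrm{BMO}$ with norm $\le C(K)$ $\Rightarrow$ $J_f\in A_\infty$'' is false in general (e.g.\ $\log|x|\in\mathrm{BMO}(\mathbb{R}^n)$ but $|x|^{-n}\notin A_\infty$); the reverse-Jensen inequality you need comes instead from the reverse H\"older / $A_p$ property of $J_f$ (Proposition~\ref{p:A_p_Jacobian}), not from BMO. Second, the admissibility condition for that $A_p$ estimate is $3kB\subset\Omega$ with $k=k(K)>1$, so shrinking by a fixed factor~$2$ is not enough; one must shrink by a $K$-dependent factor --- harmless for the conclusion, but worth noting. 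Your BMO-transfer between $B(x)$ and $\tilde B$ via Lemma~\ref{l:ball_ratio} is correct and is exactly how the paper moves between scales.
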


Theorem~\ref{t:KoebeHeis} is flexible with respect to the choice of the underlying distance in $\Hei$.
In the Heisenberg group one often considers two bi-Lipschitz equivalent distances: the \emph{sub-Rie\-mannian distance} $d_s$ and the \emph{Kor\'{a}nyi distance} $d_{\mathbb{H}^1}$, see Section \ref{ss:Heis} for the definitions. Our results apply both to $d=d_s$ and $d=d_{\mathbb{H}^1}$. Since the two distances are bi-Lipschitz equivalent, a homeomorphism $f:\Omega \to \Omega'$ is quasiconformal with respect to $d_{\mathbb{H}^1}$ if and only if it is quasiconformal with respect to $d_s$. More is true: as explained in \cite[\S 1.1]{MR1317384}, one obtains the same class of $K$-quasiconformal mappings, $K\geq 1$, with respect to either metric.
The definition of $a_f$ as given in Definition~\ref{d:a_f} depends on the metric $d$ used to define the ball $B(x)=B(x,d(x,\partial \Omega))$; let us momentarily denote $a_f^{\mathbb{H}^1}$ and $a_f^s$ to indicate dependence on $d_{\mathbb{H}^1}$ or $d_s$, respectively. Using Theorem \ref{t:BMO_BMO_loc}, Theorem \ref{c:logBMOdomain}, and \eqref{e:comp_BMO_diff_metric}, we deduce
by a similar argument as in the proof of Lemma \ref{l:a_f_comparison}
 that for every $K\geq 1$, there exists a constant $0<\Lambda_K<\infty$ such that
\begin{displaymath}
\Lambda _K^{-1} a_f^{\mathbb{H}^1}(x) \leq a_f^{s}(x)\leq \Lambda_K a_f^{\mathbb{H}^1}(x),\quad\text{for all }x\in\Omega.
\end{displaymath}
It follows that once we have established Theorem \ref{t:KoebeHeis} for either the Kor\'{a}nyi or the sub-Riemannian distance, then it also holds for the other one.\smallskip

\noindent \textbf{Proof and applications of the main result.}
A crucial ingredient in the proof of Theorem \ref{t:KoebeHeis} is the following result, which we establish both with respect to the Kor\'{a}nyi distance $d_{\Hei}$ and the sub-Riemannian distance $d_s$. The necessary concepts, in particular BMO spaces and BMO seminorms $\|\cdot\|_{\ast}$ on open sets $\Omega \subset \Hei$, are introduced in Section \ref{s:JacobianQC}.
\begin{thm}\label{c:logBMOdomain}
Let $f:\Omega \to \Omega'$ be a $K$-quasiconformal map between domains in $\Hei$. Then $\log J_f$ belongs to $\mathrm{BMO}(\Omega)$ with a bound for $\|\log J_f\|_{\ast}$ in terms of $K$.
\end{thm}
As far as we know, a direct proof of this result in the case $\Om$ is a domain, not the whole space, does not appear explicitly in the literature, even in the Euclidean setting (cf.\ \cite[Remark 2]{MR0361067}). One way to obtain the result is by first proving that $\log J_f$ belongs to some \emph{local} BMO space $\mathrm{BMO}_{loc}(\Omega)$ and then using
 the identity $\mathrm{BMO}(\Omega)=\mathrm{BMO}_{loc}(\Omega)$. This is the approach which we pursue here. In the case of the sub-Riemannian distance $d_s$, the equality of $\mathrm{BMO}$ and $\mathrm{BMO}_{loc}$ goes back to work of S.\ Buckley and O.\ Maasalo \cite{MR1749313,maas}. We employ results by S.\ Staples \cite{staples} in order to deduce the corresponding identity for the Kor\'{a}nyi distance $d_{\mathbb{H}^1}$ in place of $d_s$. To be precise, Staples' result is used to establish the following.
 \begin{thm}\label{t:introThm2} For every open set $\Omega \subset \Hei$,
 \begin{displaymath}
  \mathrm{BMO}^s(\Omega)=\mathrm{BMO}^{\Hei}(\Omega)
  \end{displaymath}
  with
 \begin{displaymath}
 c_1 \|\cdot\|^{\mathbb{H}^1}_{\ast} \leq \|\cdot\|^s_{\ast} \leq c_2 \|\cdot\|^{\mathbb{H}^1}_{\ast}
 \end{displaymath}
 for  constants $0<c_1 \leq c_2<\infty$ that do not depend on $\Omega$.
 \end{thm}
The equivalence of various BMO spaces is used in the proof of Theorem \ref{c:logBMOdomain} together with distortion estimates that we deduce from the local quasisymmetry property of quasiconformal mappings.

The proof of our main result,  Theorem \ref{t:KoebeHeis}, utilizes the auxiliary results established in Section  \ref{s:JacobianQC}, Theorem \ref{c:logBMOdomain},
quantitative control over the local quasisymmetry data of quasiconformal mappings,
  as well as other observations such as the distance estimate in Proposition~\ref{c:dist_est}. The latter extends \cite[Lemma 5.15]{MR861687} from planar disks to arbitrary domains in $\Hei$.

  We also discuss various applications of Theorem \ref{t:KoebeHeis}, both for the sub-Riemannian and the Kor\'{a}nyi distance:
  \begin{itemize}
  \item Coupled with ball estimates and covering arguments, the Koebe theorem yields quasiconformal versions of results established in \cite{LRj} for quasisymmetries in an abstract setting. In Proposition~\ref{t:diam} we extend a diameter estimate for images of curves under quasiconformal mappings by P.\ Koskela, \cite[Lemma 2.6]{Koskela1994}, to the setting of $\Hei$
and we use
 an $\mathbb{H}^1$ version of the radial stretch mapping
  to
  show the sharpness of this result.
  \item Section \ref{subs:comp} is devoted to proving the comparability relation between the $L^p$-ope{\-}ra{\-}tor norm of the horizontal differential of a quasiconformal mapping and the $L^p$-integral of $a_f$, see Theorem~\ref{t:comparable}. This extends a result by Astala and Koskela \cite{MR1191747} to the Heisenberg setting and it shows how the global integrability properties of the horizontal derivative of a quasiconformal map  on a domain in $\Hei$ depend on the distortion properties encoded by $a_f$.
Amongst others,
our proof requires a specific Whitney decomposition, Lemma~\ref{l:whitney},
which we believe to be of independent interest.
\item  Finally, in Section \ref{sec:metrics}, we apply several of the mentioned results together with
Theorem \ref{t:KoebeHeis} to extend a result of Bonk--Koskela--Rohde~\cite{bkr98} regarding conformal metrics
  and quasiconformal mappings on the unit ball
  to general domains in the Heisenberg group, equipped with either $d_{\Hei}$ or $d_s$. Namely, we prove the following result (see Proposition~\ref{p:a_f_conf_density} for the precise statement):
\end{itemize}
  \begin{thm}\label{t:introThm3}
  If $f:\Omega \to \Omega'$ is a $K$-quasiconformal map between domains $\Omega,\Omega' \subsetneq \Hei$, then $a_f$ satisfies
  \begin{enumerate}
  \item a Harnack inequality,
  \item a growth condition for volume:
  \begin{displaymath}
  \int_{B_{a_f}(x,r)}a_f^4 \,dm \lesssim r^4\quad\text{for all }x\in \Omega,\, r>0,
  \end{displaymath}
  where
 $
  B_{a_f}(x,r):= \{y\in \Omega:\; \inf \int_{\gamma} a_f \,ds<r\}
$ with the infimum taken over all locally rectifiable curves in $\Omega$ that connect $x$ and $y$.
  \end{enumerate}
 The implicit multiplicative constants in (1) and (2) depend only on $K$ and the properties of the metric space $(\Hei,d)$, $d\in \{d_s,d_{\Hei}\}$.
  \end{thm}

For quasiconformal maps defined on the entire space $\Hei$, the results mentioned above are either meaningless or already known.
We consider it one of the contributions of the present paper to provide appropriate localizations and to handle the technical difficulties that arise when dealing with maps defined on subdomains $\Omega \subsetneq \Hei$. Our work is inspired by results for quasiconformal maps on disks in the plane, and more generally on domains in $\mathbb{R}^n$.
Several tools available in the Euclidean setting, such as the Teichm\"uller rings used in \cite[Lemma 4]{MR0361067}, an extension results for quasiconformal mappings or the Mori distortion theorem used in \cite{MR861687}, are not available in the Heisenberg group. We show that the local $\eta$-quasisymmetry of quasiconformal maps with a good control over $\eta$ can be used as a substitute for these missing tools.

 \noindent\textbf{Structure of the paper.} In Section \ref{s:prelim} we introduce the most important notions used throughout this paper. We recall some basic information about the Heisenberg group and discuss quasiconformal and quasisymmetric mappings in $\Hei$. Section \ref{s:JacobianQC} is devoted  to the proof of Theorem \ref{c:logBMOdomain}, along the way we also establish Theorem \ref{t:introThm2}. In Section \ref{s:Koebe} we prove our main result, Theorem \ref{t:KoebeHeis}.   We conclude the paper with Section \ref{s:applications}, in which we discuss various applications of Theorem \ref{t:KoebeHeis} that culminate in Theorem \ref{t:introThm3}.

\noindent \textbf{Acknowledgements.} We thank Pekka Koskela for bringing the article \cite{MR1191747} to our attention. Part of the work on the present paper was done while K.F. visited IMPAN in October 2016 and while T.A. and B.W. visited the University of Fribourg in February 2017. We would like to thank the respective hosting institution for creating the scientific atmosphere and support. We are also grateful to the referee for numerous insightful comments that helped to improve the presentation of the manuscript. In particular we acknowledge a remark that prompted us to use local quasisymmetry to establish Proposition \ref{p:ball_dist} and Theorem \ref{t:KoebeHeis}, rather than repeating the modulus arguments  involved in the proof of the local quasisymmetry of quasiconformal maps.

\section{Definitions and preliminaries}\label{s:prelim}

The purpose of this section is to introduce concepts appearing in this paper: Loewner spaces, the Heisenberg group, and quasiconformal mappings (in the Heisenberg group). The definitions given here are standard, and a reader who is familiar with the subject may wish to go directly to Section \ref{s:JacobianQC}.

\subsection{Curves and Loewner spaces}

An important tool in the theory of quasiconformal mappings is the modulus of curve families, discussed in detail for instance in \cite{MR1654771} and in the monographs \cite{MR0454009, martio2008moduli}. Crucial properties of quasiconformal mappings that will be used in this paper, for instance Propositions \ref{p:egg_yolk}  and \ref{p:ball_dist}, are ultimately based on modulus estimates.

By a \emph{curve} in a metric space $(X,d)$ we mean a continuous map $\gamma: I \to X$ of an interval $I\subset \mathbb{R}$. A Borel function $\rho: X \to [0,+\infty]$ can be integrated with respect to arc length along rectifiable curves. For a locally rectifiable curve $\gamma: I \to X$, we set
\begin{displaymath}
\int_{\gamma}\rho\;\mathrm{d}s:= \sup_{\gamma'}\int_{\gamma'}\rho\;\mathrm{d}s,
\end{displaymath}
where the supremum is taken over all rectifiable subcurves $\gamma'$ of $\gamma$.

\begin{definition} \label{defn:curv-mod}
Let $(X,d)$ be a metric space and let $\mu$ be a Borel measure on $X$. The \emph{admissible densities} of a family $\Gamma$ of curves in $X$ are defined as
\begin{displaymath}
\mathrm{adm}(\Gamma):= \left\{\rho:X\to [0,+\infty]\text{ Borel\,: }\int_{\gamma}\rho\;\mathrm{d}s\geq 1\text{ for all }\gamma\in\Gamma\text{ locally rectifiable}\right\}.
\end{displaymath}
The \emph{$p$-modulus} of $\Gamma$ for $p\geq 1$, of $\Gamma$ is given by
\begin{displaymath}
\mathrm{mod}_p(\Gamma):= \inf \left\{\int_X \rho^p\;\mathrm{d}\mu:\; \rho\in\mathrm{adm}(\Gamma)\right\}.
\end{displaymath}
\end{definition}


The family of all curves in $X$ connecting two sets $E$ and $F$ is denoted by $\Gamma(E,F,X)$.


\begin{definition} Let $(X,d)$ be a rectifiably connected metric space of Hausdorff dimension $Q\geq 1$ and assume that $X$ is endowed with a
locally finite Borel regular measure $\mu$  with dense support. Then $X$ is said to be a \emph{($Q$-)Loewner space} if for all $t\in (0,\infty)$ one has
\begin{equation}\label{eq:LoewnerFunction}
\psi(t):=\inf \left\{ \mathrm{mod}_Q \Gamma(E,F,X):\; \triangle(E,F):= \frac{\mathrm{dist}(E,F)}{\min \{\mathrm{diam} E, \mathrm{diam} F\}}\leq t\right\}>0,
\end{equation}
where the infimum is taken over  disjoint nondegenerate continua
(compact and connected sets)
 $E$ and $F$ in $X$. We call the function $\psi$ the \emph{Loewner function} of $(X,d,\mu)$.
\end{definition}

\subsection{The Heisenberg group}\label{ss:Heis}

The first Heisenberg group $\mathbb{H}^1$ is a noncommutative nilpotent Lie group homeomorphic to $\mathbb{R}^3$. It can be endowed with a left-invariant distance $d$ such that $(\mathbb{H}^1,d)$ does not biLipschitzly embed into any Euclidean space, yet exhibits a rich and interesting geometry.
For an introduction to the subject,  we refer the interested reader to the monograph \cite{capogna2007introduction}.

Our model for $\mathbb{H}^1$ is the group $(\R^3, *)$ where the group law is given by
\begin{displaymath}
(x,y,t) * (x',y',t')=(x+x',y+y',t+t'-2xy' + 2x'y).
\end{displaymath}
Using this group law, one defines a frame of left-invariant vector fields which agree with the standard basis at the origin:
\begin{displaymath}
X:= \partial_x + 2y\partial_t,\quad Y: = \partial_y-2x\partial_t,\quad T:=\partial_t.
\end{displaymath}
The vector fields $X$ and $Y$, which are called \emph{horizontal}, have a non-vanishing commutator $[X,Y]=-4T$. This ensures that any two points $p$ and $q$ in $\mathbb{H}^1$ can be connected by an absolutely continuous curve $\gamma:[0,1]\to\mathbb{H}^1$ with the property that
\begin{displaymath}
\dot{\gamma}(s)\in H_{\gamma(s)},\quad \text{a.e. }s\in [0,1],\text{ where }H_p:= \mathrm{span}\{X_p,Y_p\}.
\end{displaymath}
Such a $\gamma$ is called a \emph{horizontal curve}. The \emph{sub-Riemannian distance} $d_s$ is defined by
\begin{displaymath}
d_s(p,q)=\inf_{\gamma} \int_0^1 \sqrt{\dot{\gamma}_1(s)^2 + \dot{\gamma}_2(s)^2}\;\mathrm{d}s,
\end{displaymath}
where the infimum is taken over all horizontal curves $\gamma=(\gamma_1,\gamma_2,\gamma_3):[0,1]\to \mathbb{H}^1$ that connect $p$ and $q$. It is well known that $d_s$ defines a left-invariant metric on $\mathbb{H}^1$ which is homogeneous under the \emph{Heisenberg dilations}  $(\delta_{\lambda})_{\lambda>0}$, given by
\begin{displaymath}
\delta_{\lambda}:\mathbb{H}^1 \to \mathbb{H}^1,\quad \delta_{\lambda}(x,y,t)=(\lambda x,\lambda y,\lambda^2 t)\,\hbox{ for } (x,y,z)\in \Hei.
\end{displaymath}
Any two homogeneous left-invariant metrics on $\mathbb{H}^1$ are bi-Lipschitz equivalent, and it is often more convenient to work with a left-invariant metric which is given by an explicitly computable formula, rather than to use $d_s$. An example of such a metric is the \emph{Kor\'{a}nyi distance}, defined by
\begin{displaymath}
d_{\mathbb{H}^1}(p,q) := \|q^{-1}p\|_{\mathbb{H}^1},\quad \text{where }\,\,\|(x,y,t)\|_{\mathbb{H}^1}= \sqrt[4]{(x^2+y^2)^2 + t^2}.
\end{displaymath}
For all $p,q\in \Hei$ it holds that
\begin{equation}\label{eq:comp_metric}
 \frac{1}{\sqrt{\pi}} d_s(p, q) \leq d_{\Hei}(p,q)  \leq d_s(p, q),
\end{equation}
see~\cite{bel}, and the length distance associated to $d_{\mathbb{H}^1}$ is exactly $d_s$.

In addition to the metric structure, we endow the Heisenberg group with a bi-invariant Haar measure $m$ which is given by the Lebesgue measure on $\mathbb{R}^3$. We recall that this measure $m$ is Ahlfors $4$-regular.
It agrees, up to a positive and finite multiplicative factor, with the $4$-dimensional Hausdorff measure with respect to a left-invariant homogeneous distance on $\mathbb{H}^1$.
Unless otherwise stated, ``measurable'' and ``integrable'' will in the following always mean ``$m$ measurable'' and ``$m$ integrable''. We denote $m(A)=:|A|$ for $A\subseteq \mathbb{H}^1$, and we write $\int f \;\mathrm{d}m= \int f(x) \;\mathrm{d}x$. Equipped with $m$ and any homogeneous left-invariant distance, the Heisenberg group becomes a $4$-Loewner space, see for instance~\cite[\S 9.25]{heinonen2012lectures}.

\begin{quote}\textbf{Convention.} Whenever we discuss quantitative dependencies of parameters on certain constants, we will omit information that such constants may also depend on the data of the metric measure space $(\mathbb{H}^1,d_{\mathbb{H}^1},m)$ or $(\mathbb{H}^1,d_s,m)$. For instance, if we say that ``a constant $C$ depends only on the distortion $K$ of the mapping'', the constant $C$ may depend also on the Loewner function, the quasiconvexity and doubling constants, etc associated to $d\in \{d_s,d_{\Hei}\}$.\\
As remarked in the introduction, Theorem \ref{t:KoebeHeis} for $d_{\mathbb{H}^1}$ is equivalent to the analogous statement with respect to $d_s$. The same holds true for the applications (Proposition \ref{t:diam}, Theorem \ref{t:comparable}, Proposition \ref{p:a_f_conf_density}). For auxiliary results needed in these proofs, we will always specify whether they hold with respect to $d_s$, $d_{\mathbb{H}^1}$, or both.\end{quote}

\subsection{Quasiconformal and quasisymmetric mappings}\label{subs-2-3}

In this section we collect the relevant facts about quasiconformal mappings in the Heisenberg group. Quasiconformal maps can be defined primarily by three definitions, the metric, analytic and geometric definition, all of which are mutually and quantitatively equivalent on domains in
$\mathbb{H}^1$, even though the distortion factor need not be the same for each definition.  The equivalence of these definitions is a central part of the general theory of quasiconformal maps and we refer the reader to \cite{HeinonenNotes,  MR1654771, MR1869604,tys2001} for details at a general level and \cite{MR1317384} for the specific case of the Heisenberg group. An important feature to note is that the class of metrically defined quasiconformal maps is the same for any pair of bi-Lipschitz equivalent metrics with a quantitative control on the distortion. As remarked in the introduction, if the two metrics are the sub-Riemannian distance $d_s$ and the Kor\'{a}nyi metric $d_{\Hei}$, then one  gets even the same  class of $K$-quasiconformal maps.
Thus, in our context it often does not matter if we use the sub-Riemannian metric  or the Kor\'{a}nyi metric and so we leave the metric unspecified in the respective statements.

In order to state the metric definition of quasiconformal mappings we introduce the following notation. Let  $\Omega \subseteq \Hei$ be an open set and let further $f:\Omega \to f(\Omega)\subseteq \mathbb{H}^1$ be a homeomorphism. For all $p \in \Omega$ and all $r>0$ we define
	\begin{align*}
	L_f (p, r) &:= \sup\{d (f(p), f(q)) : q\in \Omega,\,d(p, q) \leq r\},\\
	l_f (p, r) &:= \inf\{d(f(p), f(q)) : q\in\Omega,\,d(p, q) \geq r\}, \hbox{ and }\\
	H_f (p) &:= \limsup_{r \to 0} \frac{L_f (p, r)}{l_f(p,r)}.
	\end{align*}
\begin{definition}[Metric definition]\label{QCmetdef}
	We say that a homeomorphism $f:\Omega \to f(\Omega)\subseteq \mathbb{H}^1$ of an open set $\Omega \subseteq \Hei$ is \emph{quasiconformal}, if $H_f$ is bounded  on  $\Omega$.
\end{definition}
While metric quasiconformality is an infinitesimal property, \emph{quasisymmetry} is a global and generally stronger condition.

\begin{definition}[Quasisymmetric definition]\label{QCqsymdef}
 If $\Omega$ is an open set in $\Hei$ and $\eta:[0,\infty) \to [0,\infty)$ is a homeomorphism, then we say that a homeomorphism $f : \Omega \to  f(\Omega)\subseteq \Hei$ is \emph{$\eta$-quasisymmetric} if
\begin{align}
 \frac{d(f(p_1),f(p_2))}{d(f(p_1),f(p_3))} \leq \eta(t)  \label{etaqsymcond}
\end{align}
 for all $t>0$ and all triples of distinct points $p_1,p_2,p_3 \in \Omega$ satisfying $d(p_1,p_2) \leq t d(p_1,p_3)$. A map $f$ is said to be \emph{quasisymmetric} if it is $\eta$-quasisymmetric for some $\eta$.\\
 \medskip
 We say that $f$ is \emph{(weakly) $H$-quasisymmetric} if there exists a constant $H\geq 1$ such that
 \begin{align}
 \frac{d(f(p_1),f(p_2))}{d(f(p_1),f(p_3))} \leq H  \label{etaqsymcond}
\end{align}
 for all  triples of distinct points $p_1,p_2,p_3 \in \Omega$ satisfying $d(p_1,p_2) \leq d(p_1,p_3)$.
\end{definition}

A quasiconformal map defined on all of $\Hei$ is $\eta$-quasisymmetric for some $\eta$ that depends on the quasiconformal distortion \cite{zbMATH00746069}. An analogous statement is not true in general for mappings defined on a subdomain of $\Hei$, but the metric definition still implies a local quasisymmetry condition in the sense of Theorem \ref{qc-implies-qs} and Proposition \ref{p:egg_yolk} below. This goes back to \cite[Proposition 22]{MR1317384}.
Theorem \ref{qc-implies-qs} was proven (for  Carnot groups of dimension at least $2$)
 by Heinonen and Koskela \cite[Theorem 1.3]{zbMATH00746069} for globally defined maps, but
Heinonen remarked  in \cite[p.25]{HeinonenNotes} that the argument can be adapted to mappings between open subsets. The details for the proof showing that $\eta$ can be chosen independently of the domain $\Omega$ are given in \cite{MR3029176}; see also the comment below.

\begin{thm}[Heinonen, Koskela] \label{qc-implies-qs} For every $K\geq 1$, there exists a homeomorphism $\eta:[0,\infty) \to [0,\infty)$ such that the following holds.
 If $f$ is a $K$-quasiconformal mapping of an open set $\Omega \subseteq (\Hei,d_s)$ according to Definition~\ref{QCmetdef}, then for all triples $p,q_1,q_2 \in \Omega$ with $q_1,q_2 \in B(p, \frac{1}{2}d_s(p, \partial \Omega) )$, the mapping $f$ satisfies:
\begin{align*}
 \frac{d_s(f(p),f(q_1))}{d_s(f(p),f(q_2))} \leq \eta\left(\frac{d_s(p, q_1)}{d_s(p, q_2)}\right).
\end{align*}
\end{thm}
By the triangle inequality, the ``$p$-centered'' quasisymmetry property in Theorem \ref{qc-implies-qs} implies quasisymmetry of $f$ on the sub-Riemannian ball $B(p, \frac{1}{5}d_s(p, \partial \Omega) )$. Following the terminology in \cite[p.93]{heinonen2012lectures}, we call this fact an ``egg yolk principle''. {E.\ Soultanis and M.\ Williams \cite[Lemma 5.2]{MR3029176} provided a proof for this principle in great generality, and with a quantitative control both on the size of the ``egg yolk'' and the $\eta$-function in the definition of local quasisymmetry. See Proposition \ref{p:egg_yolk_cor} below for a related corollary. Naturally, one can also establish the local quasisymmetry property for the metric $d_{\Hei}$ instead of $d_s$ (with a possibly different homeomorphism $\eta$). This is because the comparability of the metrics $d_s$ and $d_{\Hei}$ as
stated in \eqref{eq:comp_metric} implies that
\begin{equation}\label{eq:Whitney_ball_inclusion}
B_{\Hei}\left(p,\frac{d_{\Hei}(p,\partial \Omega)}{5 \sqrt{\pi}}\right) \subseteq B_s\left(p,\frac{d_s(p,\partial \Omega)}{5}\right)
\end{equation}
for all domains $\Omega \subsetneq \Hei$ and all $p\in \Omega$. We record these observations as a  proposition, for which we do not claim any novelty.

\begin{proposition}\label{p:egg_yolk}
Let $K\geq 1$ and $d\in \{d_s,d_{\Hei}\}$. Then there exists a homeomorphism $\eta: [0,+\infty) \to [0,+\infty)$ such that  every $K$-quasiconformal map $f:\Omega \to \Omega'$ between domains in $\Hei$ is \emph{locally $\eta$-quasisymmetric} in the following sense. For every $p\in \Omega$, the map $f$ restricted to the $d$-ball $B(p,d(p,\partial \Omega)/c_d)$ is $\eta$-quasisymmetric, where $c_{d}=5$ for $d=d_s$, and $c_{d} = 5\sqrt{\pi}$ for $d=d_{\Hei}$.
\end{proposition}}
Quasiconformal maps also exhibit useful analytic properties. It was shown by
G.\
Mostow that a quasiconformal map on a domain in $\mathbb{H}^1$ is \emph{absolutely continuous on lines (ACL)}, see the discussion in \cite{MR1317384}. This property is defined analogously as the ACL property for mappings on open subsets of $\mathbb{R}^n$, but in terms of the fibrations given by the left invariant horizontal vector fields $X$ and $Y$ instead of lines parallel to the coordinate axes.
In \cite{Pansu1}, P.\ Pansu showed that local quasisymmetry for a map $f$ on an open subset of $\Hei$ implies further analytic features similar to those of quasiconformal mappings on domains in $\mathbb{R}^n$:
 for a quasiconformal map $f$ on $\Omega$ the \emph{horizontal derivatives}  $X f(p)$ and $Y f(p)$ exist for almost every $p \in \Omega$, and $f$ is \emph{Pansu differentiable}
almost everywhere in $\Omega$.

We define the \emph{Jacobian} $J_f(p)$ of a quasiconformal map $f$ at $p\in \Om$ as the volume derivative
$$
J_f(p) =\limsup_{r \to 0} \frac{|f(B(p,r))|}{|B(p,r)|} \quad\hbox{for } p \in \Omega.
$$
According to Lebesgue's differentiation theorem, the $\limsup$ can be replaced by $\lim$ in almost every point $p\in \Omega$.
If a quasiconformal map $f=(f_1,f_2,f_3)$ is $P$-differentiable at $p$, then
$J_f(p) = \det D_H f(p)^2$, where
$$
D_Hf(p) =\left( \begin{array}{cc}
Xf_1  & Yf_1 \\
Xf_2  & Yf_2   \end{array} \right).
$$
The analytic definition of quasiconformal mappings can now be stated as follows.

\begin{definition}[Analytic definition]\label{QCandef}
	If $\Omega$ is an open set in $\Hei$, we say that a homeomorphism $f : \Omega \to  f(\Omega)\subset \Hei$ is \emph{$K$-quasiconformal} if it is $\mathrm{ACL}$, Pansu differentiable almost everywhere,  and satisfies the following distortion condition: there exists $1 \leq K < \infty$ such that
\begin{equation}\label{eq:analyt_dist_ineq}
\|D_Hf (p)\|^4  \leq K J_f(p) \,\,\hbox{ for almost every }p\in\Om,
\end{equation}
where
$$
\|D_H f(p)\| = \max \{ |D_H f(p) \xi| \, :\, \xi \in H_p, \, \,|\xi|=1\}
$$
and $|\cdot|$ is obtained from the inner product which makes $\{X,Y\}$ orthonormal. A map $f$ is said to be \emph{quasiconformal} if it is $K$-quasiconformal for some $1\leq K<\infty$.
\end{definition}
Quasiconformal mappings are also absolutely continuous in measure (Proposition 3 in \cite{MR1317384}) with $J_f>0$ almost everywhere on $\Omega$.
This allows to show that the metric and analytic definition for quasiconformal mappings between domains in $\Hei$ are quantitatively equivalent to a third condition,  the \emph{geometric definition}, based on the $4$-modulus of curve families. We have decided to define ``$K$-quasiconformal'' through the analytic distortion inequality \eqref{eq:analyt_dist_ineq}. This is a matter of taste, but is convenient due to the following implications for a homeomorphism $f$ on $\Omega \subset \Hei$:
\begin{equation}\label{eq:equivQC}
\text{$K$-quasiconformal}\quad\Leftrightarrow\quad\text{metrically quasiconformal with }\mathrm{ess sup}_{p\in \Omega} H_f(p)\leq \sqrt{K}.
\end{equation}
Finally, we state a theorem due to Kor\'anyi and Reimann \cite[Proposition 20]{MR1317384}.
\begin{thm}[Kor\'{a}nyi, Reimann]
The inverse of a $K$-quasiconformal mapping between domains in $\Hei$ is $K$-quasiconformal.
\end{thm}
In \cite{MR1317384}, the metric definition was used to define ``$K$-quasiconformal'', but it can be seen from the proof, or by applying \eqref{eq:equivQC}, that the statement holds equivalently if the distortion is defined via the analytic definition as done in the present paper.

\section{$\mathrm{BMO}$ spaces and Jacobians of quasiconformal mappings}\label{s:JacobianQC}
It is well known that the Jacobian $J_f$ of a quasiconformal map $f:\mathbb{R}^n \to \mathbb{R}^n$ is an $A_{\infty}$-weight and hence $\log J_f$ is of bounded mean oscillation (BMO).
The situation is more subtle if one considers quasiconformal maps on a subdomain $\Omega \subset \mathbb{R}^n$. As shown in \cite{MR1169029}, it is not true in general that $J_f$ is an $A_{\infty}$-weight on $\Omega$, but even so $\log J_f$ lies in the  (appropriately defined) space $\mathrm{BMO}(\Omega)$. The goal of this section is to extend the latter statement from $\mathbb{R}^n$ to $\mathbb{H}^1$ by proving Theorem \ref{c:logBMOdomain}. We start with the relevant definitions.
For further reading, a classical reference for BMO spaces on homogeneous groups is \cite{MR657581}.
\subsection{BMO spaces on domains in $\Hei$}

\begin{definition}\label{d:BMO_loc}
Let $\Omega$ be an open subset of $\mathbb{H}^1$. We say that a function $u\in L^1_{loc}(\Omega)$ belongs to $\mathrm{BMO}^s(\Omega)$ if there is a constant $C$ such that
\begin{displaymath}
\Barint_B |u-u_B|\;\mathrm{d}m \leq C,\quad\text{for every $d_s$-ball }B\subseteq \Omega.
\end{displaymath}
The space $\mathrm{BMO}^{\Hei}(\Omega)$ is defined analogously with ``$d_s$'' replaced by ``$d_{\Hei}$''.
\end{definition}

\begin{definition}\label{d:BMO-norm}
For a domain $\Omega \subseteq \mathbb{H}^1$ and a function $u\in\mathrm{BMO}^s(\Omega)$, we define the
\emph{$\mathrm{BMO}^s(\Omega)$-(semi)norm} as
\begin{displaymath}
\|u\|_{\ast}^s:= \sup_{B} \Barint_B |u-u_B|\;\mathrm{d}m,
\end{displaymath}
where the supremum is taken over all $d_s$-balls $B\subset \Omega$.\\
The seminorm $\|\cdot\|_{\ast}^{\Hei}$ is defined analogously with ``$d_s$'' replaced by ``$d_{\Hei}$''.
\end{definition}

If the choice of metric $d\in \{d_s,d_{\Hei}\}$ is clear from the context (or irrelevant) we sometimes omit the superscript simply write $\mathrm{BMO}(\Omega)$ and $\|\cdot\|_{\ast}$.

We will prove Theorem \ref{c:logBMOdomain}, which states that $\log J_f \in \mathrm{BMO}(\Omega)$  for every quasiconformal map $f$ defined on a domain $\Omega \subset \Hei$, and moreover, $\|\log J_f\|_{\ast}$ can be bounded in terms of the distortion constant of $f$.
The outline of the proof  follows its Euclidean predecessors in \cite{MR0361067,MR0511997}. The main technical difficulty stems from the fact that we consider mappings which might be defined only on a subdomain of $\mathbb{H}^1$, and we work both with the sub-Riemannian distance and the Kor\'{a}nyi metric. For mappings of the entire Heisenberg group, it is well known that $J_f$ is a Muckenhoupt $A_p$-weight for some $1\leq p<\infty$, and hence an $A_{\infty}$-weight. This is a consequence of a `reverse H\"older inequality' due to Kor\'{a}nyi and Reimann (Theorem \ref{t:GehringHeisenberg} below), see for instance the overview in Section 3 of \cite{Aus}.
In Section \ref{s:BMO}, we discuss various local BMO spaces in the setting of the Heisenberg group $\Hei$, based on a characterization of $\mathrm{BMO}$ spaces in doubling length metric spaces due to S.\ Buckley.
Sections \ref{s:revHol} and \ref{s:weights} contain properties of  the Jacobian $J_f$ of a quasiconformal mapping, which are used to deduce in Section~\ref{s:logJacobian-proof} that $\log J_f$ belongs to a certain local $\mathrm{BMO}$ space. This in turn gives us the proof of Theorem~\ref{c:logBMOdomain}.

\subsection{Local BMO spaces on domains in $\Hei$}\label{s:BMO}

 The goal of this section is to study the local $\mathrm{BMO}$ spaces defined with respect to distance functions $d_{\mathbb{H}^1}$ and $d_s$. Among the results we show that all the respective spaces agree.
\begin{definition}\label{d:BMO_loc}
Let $\Omega$ be an open subset of $\mathbb{H}^1$.
We say that $u\in L^1_{loc}(\Omega)$ belongs to the \emph{local $n$-$\mathrm{BMO}$ space} $\mathrm{BMO}^s_{n,loc}(\Omega)$ for $n \geq 1$ if there is a constant $C$ such that
\begin{equation}\label{d:bmoloc}
\Barint_B |u-u_B|\;\mathrm{d}m \leq C,\quad\text{for every $d_s$-ball $B$ with }nB \subseteq \Omega.
\end{equation}
We say that $u\in L^1_{loc}(\Omega)$ belongs to $\mathrm{BMO}^s_{loc}(\Omega)$ if there is $n>1$ such that $u\in \mathrm{BMO}^s_{n,loc}(\Omega)$.\footnote{In our application, the constant $n$ will be determined by the proof. In the standard definition of  $\mathrm{BMO}_{loc}(\Omega)$ one would take $n=2$, as for instance in \cite{MR1749313}.}

The spaces $\mathrm{BMO}_{n,loc}^{\Hei}(\Omega)$ and $\mathrm{BMO}_{loc}^{\Hei}(\Omega)$ are defined analogously with ``$d_s$'' replaced by ``$d_{\Hei}$''.
\end{definition}

As before, we may write $\mathrm{BMO}_{n,loc}(\Omega)$ and $\mathrm{BMO}_{loc}$ without specifying the metric $d\in \{d_s,d_{\Hei}\}$.

\begin{definition}\label{d:BMO-norm-loc}
For a domain $\Omega \subseteq \mathbb{H}^1$ and a function $u\in\mathrm{BMO}_{n,loc}(\Omega)$, we define the
\emph{{$\mathrm{BMO}_{n,loc}(\Omega)$}-(semi)norm} as
\begin{displaymath}
\|u\|_{\mathrm{BMO}_{n,loc}(\Omega)}:= \sup_{B} \Barint_B |u-u_B|\;\mathrm{d}m,
\end{displaymath}
where the supremum is taken over all $d$-balls $B$ which satisfy $nB\subset \Omega$.
\end{definition}

The following lemma addresses some of the claims made in the introduction.

\begin{lemma}\label{c:bmo-sub-Hei}
For all open sets $\Omega \subseteq \mathbb{H}^1$, for all $n\geq 1$ and for all $u\in L^1_{loc}(\Omega)$
\[
\|u\|_{{\rm BMO}_{\sqrt{\pi}n,loc}^s(\Omega)}\leq 2\pi^2 \|u\|_{{\rm BMO}_{n,loc}^{\mathbb{H}^1}(\Omega)}
\quad\hbox{ and }\quad
\|u\|_{{\rm BMO}_{\sqrt{\pi}n,loc}^{\mathbb{H}^1}(\Omega)}\leq 2\pi^2 \|u\|_{{\rm BMO}_{n,loc}^{s}(\Omega)}.
\]
In particular, one has
 \begin{equation}
  {\rm BMO}_{n, loc}^{\Hei}(\Om)\subset {\rm BMO}_{\sqrt{\pi}n, loc}^{s}(\Om)\quad\text{and}\quad {\rm BMO}_{n', loc}^{s}(\Om) \subset {\rm BMO}_{\sqrt{\pi}n', loc}^{\Hei}(\Om),
  \label{in:loc-loc}
 \end{equation}
 for all $n,n'\geq 1$.
\end{lemma}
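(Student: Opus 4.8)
The plan is to deduce the lemma from the sharp bi-Lipschitz comparison between the two distances together with a standard mechanism for transferring mean-oscillation bounds between comparable balls. Throughout, write $B^s(x,r)$ and $B^{\mathbb{H}^1}(x,r)$ for the $d_s$- and $d_{\mathbb{H}^1}$-balls. The starting point is the inequality
\[
d_{\mathbb{H}^1}\le d_s\le \sqrt{\pi}\,d_{\mathbb{H}^1},
\]
in which the left-hand estimate is automatic because $d_s$ is the length (geodesic) distance associated to $d_{\mathbb{H}^1}$, while the right-hand estimate with the \emph{sharp} constant $\sqrt{\pi}$ is the source of the dilation factor in the statement. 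I would record this as a short preliminary observation: the extremal ratio is attained on the center, since a sub-Riemannian geodesic from $0$ to $(0,0,t)$ projects to a circle enclosing area $|t|/4$, whence the isoperimetric inequality gives $d_s(0,(0,0,t))=\sqrt{\pi t}=\sqrt{\pi}\,\|(0,0,t)\|_{\mathbb{H}^1}$. Translating the comparison into ball inclusions yields, for all $x$ and $r>0$,
\[
B^s(x,r)\subseteq B^{\mathbb{H}^1}(x,r)\quad\text{and}\quad B^{\mathbb{H}^1}(x,\rho)\subseteq B^s(x,\sqrt{\pi}\,\rho).
\]

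Next I would isolate the two elementary facts driving the transfer. First, oscillation is insensitive up to a factor $2$ to the subtracted constant: $\Barint_{B}|u-u_{B}|\,\mathrm{d}m\le 2\Barint_{B}|u-c|\,\mathrm{d}m$ for any $c$. Second, Korányi balls scale exactly, $|B^{\mathbb{H}^1}(x,r)|=\omega\,r^4$ with $\omega:=|B^{\mathbb{H}^1}(0,1)|$, by left-invariance and the fact that $\delta_\lambda$ has Jacobian $\lambda^4$. Combined with the inclusions above this pins down the relevant measure ratios by $\pi^2$: from $B^{\mathbb{H}^1}(x,r/\sqrt{\pi})\subseteq B^s(x,r)\subseteq B^{\mathbb{H}^1}(x,r)$ one gets $|B^{\mathbb{H}^1}(x,r)|/|B^s(x,r)|\le \pi^2$, and from $B^s(x,\sqrt{\pi}r)\subseteq B^{\mathbb{H}^1}(x,\sqrt{\pi}r)$ one gets $|B^s(x,\sqrt{\pi}r)|/|B^{\mathbb{H}^1}(x,r)|\le \pi^2$.

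With these in hand each asserted inequality is a one-line sandwich. For the first, take a sub-Riemannian ball $B^s=B^s(x,r)$ with $\sqrt{\pi}n\,B^s\subseteq\Omega$ and compare it against $B^{\mathbb{H}^1}(x,r)\supseteq B^s$. Since $B^{\mathbb{H}^1}(x,nr)\subseteq B^s(x,\sqrt{\pi}nr)\subseteq\Omega$, the $n$-local Korányi condition applies to $B^{\mathbb{H}^1}(x,r)$, and then
\[
\Barint_{B^s}|u-u_{B^s}|\,\mathrm{d}m\le 2\Barint_{B^s}|u-u_{B^{\mathbb{H}^1}}|\,\mathrm{d}m\le \frac{2|B^{\mathbb{H}^1}|}{|B^s|}\Barint_{B^{\mathbb{H}^1}}|u-u_{B^{\mathbb{H}^1}}|\,\mathrm{d}m\le 2\pi^2\,\|u\|_{\mathrm{BMO}_{n,loc}^{\mathbb{H}^1}(\Omega)},
\]
where $B^s\subseteq B^{\mathbb{H}^1}$ and nonnegativity of the integrand are used to pass to the larger domain. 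Taking the supremum over admissible $B^s$ gives the first inequality with $C=2\pi^2$. The second is entirely symmetric: given $B^{\mathbb{H}^1}(x,r)$ with $\sqrt{\pi}n\,B^{\mathbb{H}^1}\subseteq\Omega$, compare against $\tilde B:=B^s(x,\sqrt{\pi}r)\supseteq B^{\mathbb{H}^1}(x,r)$, note $n\tilde B=B^s(x,\sqrt{\pi}nr)\subseteq B^{\mathbb{H}^1}(x,\sqrt{\pi}nr)\subseteq\Omega$ so that the $n$-local sub-Riemannian condition applies, and run the same three-step estimate. The inclusions \eqref{in:loc-loc} then follow at once.

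The computation is routine once the metric comparison is fixed, so I expect the only genuinely delicate point to be the bookkeeping of dilation factors, namely verifying in each direction that the enlarged ball used to invoke the hypothesis still lies in $\Omega$; this is exactly where the sharp constant $\sqrt{\pi}$ must be deployed consistently, as a cruder bi-Lipschitz constant would force a larger, non-sharp dilation factor. The universality of $C=2\pi^2$ rests on the exact $r^4$-scaling of Korányi balls, so the one external input I would state with care is the sharp comparison $d_s\le\sqrt{\pi}\,d_{\mathbb{H}^1}$.
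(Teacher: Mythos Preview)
Your proposal is correct and follows essentially the same approach as the paper: both use the sharp comparison $d_{\mathbb{H}^1}\le d_s\le \sqrt{\pi}\,d_{\mathbb{H}^1}$, the resulting ball inclusions, the measure ratio bound $\pi^2$, and the standard BMO device for transferring mean oscillation between nested balls, arriving at the same constant $C=2\pi^2$. The only cosmetic difference is that the paper writes out the intermediate bound $|u_{B'}-u_B|\le \tfrac{|B'|}{|B|}\Barint_{B'}|u-u_{B'}|\,\mathrm{d}m$ explicitly before applying the triangle inequality, whereas you invoke the equivalent ``factor $2$'' inequality $\Barint_B|u-u_B|\le 2\Barint_B|u-c|$ directly.
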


\begin{proof}
We prove the first inclusion in \eqref{in:loc-loc} and the estimate for the corresponding seminorms. The proof of the remaining claims follows the same lines. The argument uses the precise relation between $d_s$ and $d_{\mathbb{H}^1}$ stated in \eqref{eq:comp_metric}. Let us denote by $B=B_s(x,r)\subset \Om$ a ball defined with respect to the sub-Riemannian distance such that $\sqrt{\pi}nB\subset \Om$, for a given $n\geq 1$. By the above relation between distances, there exists a ball $B'=B_{\Hei}(x,r)$, defined with respect to the Kor\'{a}nyi distance, satisfying $B'\subset \Om$ and such that $B\subset B'$ and $nB'\subset \Om$. We verify by direct computations that for any function $u\in {\rm BMO}_{n, loc}^{\Hei}(\Om)$ it holds that
\begin{align*}
 |u_{B'}-u_{B}|
 =\frac{1}{|B|}\left|\int_{B}(u_{B'}-u)\;\mathrm{d}m\right|
 \leq \frac{c}{|B'|}\int_{B'}\left|u_{B'}-u\right|\;\mathrm{d}m,
\end{align*}
where $c=\pi^2$. From this estimate we infer the following inequality:
\begin{align}\label{e:comp_BMO_diff_metric}
\notag \frac{1}{|B|}\int_{B}\left|u-u_{B}\right|\;\mathrm{d}m& \leq  \frac{1}{|B|}\int_{B}\left|u-u_{B'}\right|+|u_{B}-u_{B'}| \;\mathrm{d}m\\
 & \leq \frac{c}{|B'|}\int_{B'}\left|u-u_{B'}\right|\;\mathrm{d}m+\frac{c}{|B'|}\int_{B'}\left|u_{B'}-u\right| \;\mathrm{d}m\\
\notag & \leq \frac{2c}{|B'|}\int_{B'}\left|u-u_{B'}\right|\;\mathrm{d}m.
\end{align}
Applying this reasoning to all sub-Riemannian balls with $\sqrt{\pi}nB\subseteq \Omega$, it follows that $u\in {\rm BMO}_{\sqrt{\pi}n, loc}^{s}(\Om)$ provided that $u\in {\rm BMO}_{n, loc}^{\mathbb{H}^1}(\Om)$, with the desired bound for the $\mathrm{BMO}$-norm.

The second inclusion in \eqref{in:loc-loc} with the corresponding estimate for the BMO-norm follows the same reasoning with $B:= B_{\mathbb{H}^1}(x,r)$, $B':=B_s(x,\sqrt{\pi}r)$ and $n$ replaced by $n'$.
\end{proof}

\begin{remark}
Lemma \ref{c:bmo-sub-Hei} is a special case of a more general result holding in doubling metric measure spaces for two given bi-Lipschitz equivalent metrics, as can be seen from the proof. Our focus here lies on the two metric $d_s$ and $d_{\Hei}$ and the precise relations that can be derived from \eqref{eq:comp_metric}. For more information about BMO spaces in doubling length metric spaces and their applications, we refer the interested reader to \cite{MR1749313} and the references therein.
\end{remark}

We will later see that $\mathrm{BMO}^s(\Omega) = \mathrm{BMO}^s_{n, loc}(\Omega)$ for all $n>1$.
Lemma~\ref{c:bmo-sub-Hei} then implies ${\rm BMO}^{\mathbb{H}^1}(\Omega)\subseteq {\rm BMO}^s(\Omega)$, but the reverse inclusion  does not follow directly from the lemma
since this would require to consider arbitrary balls contained in $\Omega$. To discuss this, we follow Staples~\cite[Definition 2.1]{staples}, and introduce the following definition.

\begin{definition}\label{def-Lp-ave}
Consider the metric measure space $(\mathbb{H}^1,d_s,m)$. A domain $D$ in this space is said to be an \emph{$L^1$-averaging domain} if $m(D)<\infty$  and for some $n>1$ we have
\begin{displaymath}
\frac{1}{|D|}\int_D |u(x)-u_D|\;\mathrm{d}m\leq C_{ave} \left(\sup_{nB_s \subset D}\frac{1}{|B_s|}\int_{B_s}|u(x)-u_{B_s}|\;\mathrm{d}m\right)
\end{displaymath}
for a constant $0<C_{ave}<\infty$ which does not depend on $u$. The supremum in this definition is taken over all sub-Riemannian balls $B_s$ for which the enlarged ball $nB_s$ is contained in $D$.
\end{definition}

Staples defines more generally $L^p$-averaging domains for $p\geq 1$ in the  setting of homogeneous spaces in the sense of Coifman and Weiss, see Section 2 in \cite{staples} for details.

We wish to show that all Kor\'{a}nyi balls $D$ in $\mathbb{H}^1$ are $L^1$-averaging domains with uniform constants $C_{ave}$ and $n$. To this end we will show that the unit ball is an $L^1$-averaging domain, and then conclude by left-invariance and homogeneity. The $L^1$-averaging property of $D$ is a consequence of the geometric condition we now discuss.

Following Definition 2.1 in~\cite{bukolu} we say that a domain $D$ in $(\mathbb{H}^1,d_s,m)$ satisfies the \emph{Boman chain condition} if there exist constants $M>0$, $\lambda>1$, $C_2>C_1>1$, and, a family $\mathcal{F}$ of $d_s$-balls such that \\
(1) $D=\bigcup_{B\in \mathcal{F}} C_1B$\, (the domain $D$ is the union of enlarged balls in $\mathcal{F}$).\\
(2) $\sum_{B\in \mathcal{F}} \chi_{_{C_2B}}(x) \leq M \chi_{_{D}}(x)$ for all $x\in \Hei$\,(a point in $D$ is covered by at most $M$ enlarged balls in $\mathcal{F}$).\\
(3) there exists a so-called ``central ball'' $B_{*}\in \mathcal{F}$ such that for each ball $B\in \mathcal{F}$ there is a positive integer $k=k(B)$ and a chain of balls $B_0, \ldots, B_{k}$ such that $B_0=B, B_k=B_{*}$ and the following properties hold:
\begin{itemize}
\item[(3.1)]  for every $j=0,\ldots, k-1$ there exists a ball $D_j$ satisfying
\[
 D_j\subset C_1B_j\cap C_1B_{j+1}\quad\hbox{and}\quad m(B_j)\approx m(D_j) \approx m(B_{j+1}).
\]
\item[(3.2)] $B\subset \lambda B_j$ for all $j=0,\ldots, k(B)$.
\end{itemize}

Theorem 3.1 in \cite{bukolu}, stated for more general homogeneous metric spaces, in particular shows that John domains in  $(\mathbb{H}^1,d_s,m)$  satisfy the Boman chain condition.

\begin{proposition}\label{p:UnitBallAveraging}
The Kor\'{a}nyi unit ball $D=B_{\mathbb{H}^1}(0,1)$ is an $L^1$-averaging domain in $(\Hei,d_s,m)$.
\end{proposition}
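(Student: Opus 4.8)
The plan is to verify that $D$ meets a sufficient geometric condition under which Staples' theory \cite{staples} forces the $L^1$-averaging property, namely that $D$, regarded as a domain in the homogeneous space $(\mathbb{H}^1,d_s,m)$, is a \emph{John domain} with respect to the sub-Riemannian distance, with John center at the origin. John domains satisfy the chain (Boman-type) condition that Staples uses, and chaining then yields the averaging inequality: one telescopes $u_{B_i}-u_{B_{i+1}}$ along a chain of sub-Riemannian balls $B_i$ with $nB_i\subset D$ and substantial overlap, bounding each increment by the local seminorm on the right-hand side, and sums with summable weights. The finiteness $m(D)<\infty$ is immediate, since $D$ is bounded.

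The geometric heart is the construction, for every $p\in D$, of a horizontal curve $\gamma$ from $p$ to $0$ lying in $D$ and obeying the carrot condition $\dist(\gamma(s),\partial D)\geq c\,s$ (arc length $s$ measured from $p$, and $\dist$ taken in $d_s$), with $c$ independent of $p$. I would first cut down the work using the symmetries of the gauge: rotational invariance of $\|\cdot\|_{\mathbb{H}^1}$ about the $t$-axis and the reflection $t\mapsto -t$ reduce the problem to a two-parameter family of base points, while the sandwiching $B_s(0,1)\subseteq D\subseteq B_s(0,\sqrt{\pi})$ coming from \eqref{eq:comp_metric} controls $\dist(\cdot,\partial D)$ from both sides in terms of the gauge. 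Where $\partial D$ is non-characteristic, it is a smooth hypersurface transverse to the horizontal distribution, and the John curves can be taken to be the explicit sub-Riemannian geodesics from the origin (lifts of circular arcs); their interior position and the linear lower bound on the distance to $\partial D$ follow from standard ball--box estimates.

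The main obstacle is the behaviour at the two characteristic points $(0,0,\pm1)$, where the horizontal plane is tangent to $\partial D$ and sub-Riemannian balls collapse in the vertical direction. Near $(0,0,1)$ the boundary opens as a blunt quartic cap, $x^2+y^2\sim\big(2(1-t)\big)^{1/2}$, rather than a cusp, and one computes $\dist\big((0,0,1-h),\partial D\big)\asymp\sqrt{h}$, the cheapest escape being the vertical one toward the pole. The two things to check are that the geodesic joining $0$ to such a near-pole point remains well inside $D$ -- which holds because the enclosed-area constraint keeps the lifted circle at bounded horizontal radius and moderate height, so the gauge stays bounded away from $1$ along it -- and that the carrot inequality persists as $p\to(0,0,\pm1)$, where the competition between the $\sqrt{h}$ distance to the boundary and the arc length spent leaving the polar region is tightest. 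I expect this local analysis at the characteristic set, not the interior estimate, to be the delicate point; alternatively one may quote known regularity results showing that gauge balls in $\mathbb{H}^1$ are uniform (NTA) domains, which already implies the John property.

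Once $D$ is shown to be a John domain for $d_s$, the conclusion follows by invoking Staples' result that such a domain of finite measure in a homogeneous space is $L^1$-averaging. The constants obtained are uniform, which is precisely what permits the passage from the unit ball to every Kor\'{a}nyi ball by the left-translation and dilation invariance of $d_s$ and $m$, as announced before the statement.
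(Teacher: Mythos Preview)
Your proposal is correct and follows essentially the same logical skeleton as the paper: show that $D$ is a John domain in $(\mathbb{H}^1,d_s,m)$, pass from John to the Boman chain condition, and then invoke Staples' result that Boman chain domains are $L^p$-averaging. The difference lies in how the John property is obtained. You make the direct geometric verification your primary route---constructing explicit horizontal carrot curves and analyzing their behaviour near the characteristic points $(0,0,\pm1)$---and mention the NTA result only as a fallback. The paper does the opposite: it simply cites Capogna--Garofalo for the fact that Kor\'{a}nyi balls are (X-)NTA, hence John, then cites Buckley--Koskela--Lu for John~$\Rightarrow$~Boman chain, and finally Staples for Boman chain~$\Rightarrow$~$L^1$-averaging. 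Your hands-on approach would be self-contained and would make the John constant explicit, at the cost of the delicate local analysis at the poles that you correctly flag as the nontrivial step; the paper's approach is a three-line chain of citations but imports the geometric work from the literature.
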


\begin{proof}
First, $D$ is a John domain in $(\Hei,d_s)$. To prove this, it suffices to show that there exists a point $p_0$ (the ``John center'') and a constant $C\geq 1$ such that every $p\in D$ can be connected to $p_0$ by a rectifiable curve $\gamma$ with the property that
\begin{displaymath}
d_s(\gamma(t),\partial D) \geq C^{-1}\min \{d_s(p_0,\gamma(t)),d_s(\gamma(t),p)\}
\end{displaymath}
for all $\gamma(t)$. Clearly we can take $p_0=0$. For $p\in D$, we let $\gamma$ be a (sub-Riemannian) geodesic with $\gamma(0)=0$ and $\gamma(1)=p$. Then, it suffices to observe that for all $q\in B_s(\gamma(t),1-t)$, one has
\begin{displaymath}
d_{\Hei}(q,0)\leq d_{\Hei}(q,\gamma(t))+ d_{\Hei}(\gamma(t),0) \leq d_s(q,\gamma(t))+d_s(\gamma(t),0) \leq 1-t + t = 1.
\end{displaymath}
 By the discussion preceding the statement of Proposition \ref{p:UnitBallAveraging}, the John domain $D$ satisfies the Boman chain condition.
Theorem 4.2 in~\cite{staples} implies that every domain which satisfies the Boman chain condition is an $L^r$-averaging domain for every $1\leq r<\infty$, and thus, in particular for $r=1$. Hence $D$ is an $L^1$-averaging domain as claimed.
\end{proof}

\begin{cor}\label{lem-kor-ave}
Every Kor\'{a}nyi ball $B_{\mathbb{H}^1}(p,r)$ is an $L^1$-averaging domain with the same constants $C_{ave}$ and $n$.
\end{cor}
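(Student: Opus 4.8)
The plan is to reduce the general Kor\'{a}nyi ball to the unit ball treated in Proposition~\ref{p:UnitBallAveraging} by exploiting the two symmetries of the space that are compatible with the Kor\'{a}nyi distance: left translations and Heisenberg dilations. Concretely, given an arbitrary ball $B_{\mathbb{H}^1}(p,r)$, I would write the map
\begin{displaymath}
\Phi(q):= p\,\delta_r(q),
\end{displaymath}
and observe that $\Phi$ is a bijection of $\Hei$ sending the unit ball $D=B_{\mathbb{H}^1}(0,1)$ onto $B_{\mathbb{H}^1}(p,r)$. The key structural facts are that $\Phi$ maps Kor\'{a}nyi balls to Kor\'{a}nyi balls (since $\|\cdot\|_{\mathbb{H}^1}$ is left-invariant and homogeneous of degree one under $\delta_r$, i.e.\ $\|\delta_r(q)\|_{\mathbb{H}^1}=r\|q\|_{\mathbb{H}^1}$), and that $\Phi$ scales the Haar measure by a single constant factor $r^4$ (left translations preserve $m$, and $\delta_r$ multiplies it by $r^4$ because $m$ is Ahlfors $4$-regular). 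In particular $\Phi$ sends the collection of balls $B_s$ with $nB_s\subset D$ exactly onto the collection of balls with $nB'_s\subset B_{\mathbb{H}^1}(p,r)$, with the dilation factor $n$ preserved.

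Next I would perform the change of variables $x=\Phi(z)$ in both the left-hand average of Definition~\ref{def-Lp-ave} and in each ball-average on the right-hand side. Setting $v(z):=u(\Phi(z))$, the constant measure-scaling factor $r^4$ cancels in every normalized average, so that for any ball $B'=\Phi(B)$ one has $v_{B}=u_{B'}$ and $v_D=u_{B_{\mathbb{H}^1}(p,r)}$, together with
\begin{displaymath}
\frac{1}{|B'|}\int_{B'}|u-u_{B'}|\;\mathrm{d}m = \frac{1}{|B|}\int_{B}|v-v_{B}|\;\mathrm{d}m.
\end{displaymath}
Applying Proposition~\ref{p:UnitBallAveraging} to the function $v$ on the unit ball $D$ then yields the averaging inequality for $u$ on $B_{\mathbb{H}^1}(p,r)$, with the same constants $C_{ave}$ and $n$, since the bijective correspondence $B\leftrightarrow B'$ identifies the two suprema over admissible balls.

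The only genuine point requiring care, and the step I expect to be the main obstacle, is verifying that $\Phi$ really does preserve the relevant geometric data uniformly: that it sends $n$-times-enlarged balls to $n$-times-enlarged balls (so the constant $n$ is the same for every $(p,r)$) and that it induces a measure-preserving correspondence after normalization. Both follow from left-invariance of $d_{\mathbb{H}^1}$ and the homogeneity $\|\delta_r(q)\|_{\mathbb{H}^1}=r\|q\|_{\mathbb{H}^1}$, combined with the fact that $m\circ\delta_r = r^4 m$; since these transformations act the same way regardless of the base point $p$ and radius $r$, the constant $C_{ave}$ and the dilation parameter $n$ obtained for the unit ball carry over without change. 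This is exactly the ``left-invariance and homogeneity'' argument anticipated in the paragraph preceding Proposition~\ref{p:UnitBallAveraging}.
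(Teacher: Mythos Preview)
Your argument is correct and is essentially identical to the paper's proof: both use the map $\Phi=\tau_p\circ\delta_r$, the measure scaling $m\circ\delta_r=r^4 m$, and the change of variables $v=u\circ\Phi$ to transport the averaging inequality from the unit ball to an arbitrary Kor\'{a}nyi ball. One small point of care: the balls $B_s$ appearing in Definition~\ref{def-Lp-ave} are \emph{sub-Riemannian} balls, so the relevant fact is that $\Phi$ sends sub-Riemannian balls to sub-Riemannian balls (and preserves the dilation factor $n$); your justification is phrased for the Kor\'{a}nyi norm, but since $d_s$ is equally left-invariant and homogeneous under $\delta_r$, the same reasoning applies verbatim.
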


\begin{proof}
Denote $D_0:=B_{\mathbb{H}^1}(0,1)$. By Proposition \ref{p:UnitBallAveraging}, there exist  $0<C_0<\infty$ and $n_0>1$  such that
\begin{equation}\label{eq:UnitAveraging}
\frac{1}{|D_0|}\int_{D_0} |u(x)-u_{D_0}|\;\mathrm{d}m\leq C_{0} \left(\sup_{n_0B_s \subset D_0}\frac{1}{|B_s|}\int_{B_s}|u(x)-u_{B_s}|\;\mathrm{d}m\right).
\end{equation}
Consider now an arbitrary Kor\'{a}nyi ball $D:= B_{\mathbb{H}^1}(p,r)$. Recall that left translations, denoted $\tau_p$, have Jacobian determinant equal to $1$, and dilations by $r$, denoted $\delta_r$, are diffeomorphisms with Jacobian $r^4$. This yields by the transformation formula that
\begin{displaymath}
u_D:= \frac{1}{|D|}\int_D u \;\mathrm{d}m = \frac{1}{r^4}\frac{1}{|D_0|} \int_{D_0} (u\circ \tau_p \circ \delta_r) r^4\;\mathrm{d}m= (u\circ \tau_p\circ \delta_r)_{D_0}.
\end{displaymath}
Hence, by \eqref{eq:UnitAveraging},
\begin{align*}
\frac{1}{|D|}\int_{D} |u(y)-u_{D}|\;\mathrm{d}y
&= \frac{1}{|D_0|} \int_{D_0}|v(x)-v_{D_0}|\;\mathrm{d}x\\
&\leq C_{0} \left(\sup_{n_0B_s \subset D_0}\frac{1}{|B_s|}\int_{B_s}|v(x)-v_{B_s}|\;\mathrm{d}m\right)\\
&=  C_{0} \left(\sup_{n_0B_s \subset D}\frac{1}{|B_s|}\int_{B_s}|u(x)-u_{B_s}|\;\mathrm{d}m\right)
\end{align*}
for $v:= u \circ \tau_p \circ \delta_r$.
\end{proof}

\begin{thm}\label{t:BMO_BMO_loc}
The following statements hold true for every open set $\Om$ in $\mathbb{H}^1$ with constants independent of $\Omega$.
\begin{enumerate}
\item\label{i:BMO1} $\mathrm{BMO}^s(\Omega)=\mathrm{BMO}^s_{n,loc}(\Omega)$ {for all }$n>1$ with
\begin{equation}\label{eq:BMO_norm_s_loc}
 \|\cdot\|_{{\rm BMO}_{n,loc}^{s}(\Omega)} \leq \|\cdot\|_{\ast}^s \leq c_n^s  \|\cdot\|_{{\rm BMO}_{n,loc}^{s}(\Omega)},
\end{equation}
\item\label{i:BMO2} $\mathrm{BMO}^s(\Omega)=\mathrm{BMO}^{\mathbb{H}^1}(\Omega)$ with
\begin{equation}\label{eq:BMO_s_H_norms}
c_1 \|\cdot\|^{\mathbb{H}^1}_{\ast} \leq \|\cdot\|^s_{\ast} \leq c_2 \|\cdot\|^{\mathbb{H}^1}_{\ast}
\end{equation}
\item\label{i:BMO3} $\mathrm{BMO}^{\Hei}(\Omega)=\mathrm{BMO}^{\Hei}_{n,loc}(\Omega)$ {for all }$n>1$ with
\begin{displaymath}
 \|\cdot\|_{{\rm BMO}_{n,loc}^{\Hei}(\Omega)} \leq \|\cdot\|_{\ast}^{\Hei} \leq c_n^{\Hei}  \|\cdot\|_{{\rm BMO}_{n,loc}^{\Hei}(\Omega)}.
\end{displaymath}
\end{enumerate}
\end{thm}

\begin{proof}
First, the following holds trivially for any $n>1$:
$$
\mathrm{BMO}^{\Hei}(\Omega)\subseteq \mathrm{BMO}^{\Hei}_{n, loc}(\Omega)\quad\text{and}\quad \|\cdot\|_{{\rm BMO}_{n,loc}^{\Hei}(\Omega)} \leq \|\cdot\|_{\ast}^{\Hei}
$$
and the same  is true for the BMO-spaces considered with respect to the sub-Riemannian distance. Next, as claimed in \eqref{i:BMO1}, we note that
\begin{equation}\label{eq:equality_s_BMO}
\mathrm{BMO}^s(\Omega)=\mathrm{BMO}_{n,loc}^s(\Omega)\text{ for all }n>1,
\end{equation}
 where
 only the inclusion ``$\supseteq$'' is nontrivial.
In the setting of doubling metric measure spaces with a length metric, the assertion is essentially proven by Buckley in Theorem 0.3 in \cite{MR1749313}. However, Buckley's proof specifically considers balls $B$ such that $2B \subset \Omega$ and requires significant effort in its adaptation to the more general condition $nB \subset \Omega$ for some $n>2$. Instead we follow the shorter and more transparent proof of Theorem 2.2 in~\cite{maas}. As $\Hei$ equipped with the sub-Riemannian distance $d_s$ is a length space, Theorem 2.2 in~\cite{maas} can now be applied with $\lambda=n$, see the proof of~\cite[Theorem 2.2]{maas}. Since our reasoning is verbatim the same as in \cite{maas} for $n=2$, let us focus only on the modifications required for general $n$.

Consider a ball $B(x_0,R)$ that is admissible for $\|\cdot\|_{\ast}^s$ and a point $x\in B(x_0,R)$.
The key part of the proof is to construct a certain chain of balls $B_i(x_i,r_i)$ in $\Om$ centered on  a geodesic and connecting small enough balls $B(x_0,r_0)$ and $B(x,r_x)$ in such a way that  for $i=1,\ldots, N-1$ one has $r_i=\frac{R-d_s(x_0,x_i)}{2^{n+1}}$. Then one shows that $r_i=\alpha^{N-i}r_N$, with $\alpha:=\frac{2^{n+1}+1}{2^{n+1}-1}$. As in \cite[(2.11)]{maas}, we then obtain the following estimate for the length of the chain: $N-1\leq c\log_{\alpha}\left(\frac{R}{R-d_s(x_0,x)}\right)$. Thus,
the
 counterpart of \cite[(2.12)]{maas} reads: $N-1\leq c\log_{\alpha}n^k$ if $x$ is at a certain distance of $x_0$, as quantified by $k$.
The rest of the proof  follows exactly as in \cite{maas}. This yields \eqref{eq:equality_s_BMO} with the quantitative control on the respective $\mathrm{BMO}$-norms stated in \eqref{eq:BMO_norm_s_loc} (depending on $n$ and the doubling constant of $(\Hei,d_s,m)$).

We now proceed to part \eqref{i:BMO2} of the theorem.
It follows from Lemma \ref{c:bmo-sub-Hei} and  \eqref{eq:equality_s_BMO} that
\begin{displaymath}
\mathrm{BMO}^{\mathbb{H}^1}(\Omega) \subseteq \mathrm{BMO}_{n,loc}^{\mathbb{H}^1}(\Omega) \subseteq \mathrm{BMO}_{\sqrt{\pi}n,loc}^s(\Omega)=\mathrm{BMO}^s(\Omega)
\end{displaymath}
for all $n>1$, with $\|\cdot\|_{\ast}^s$ bounded from above by a universal constant times $\|\cdot\|_{\ast}^{\mathbb{H}^1}$.
 The goal now is to show the reverse inclusion. To do so, we use the
 fact that all Kor\'{a}nyi balls are $L^1$-averaging domains with uniform constants, cf. Corollary~\ref{lem-kor-ave}. Let $D_{\mathbb{H}^1}$ denote a Kor\'{a}nyi ball contained in $\Om$. Then
\begin{align*}
\|u\|_{{\rm BMO}^{\mathbb{H}^1}(\Omega)}&= \sup_{D_{\mathbb{H}^1}\subseteq \Omega}\frac{1}{|D_{\mathbb{H}^1}|}\int_{D_{\mathbb{H}^1}}|u-u_{D_{\mathbb{H}^1}}|\;\mathrm{d}m\\
&\leq \sup_{D_{\mathbb{H}^1}\subseteq \Omega} C_{ave}
\left(\sup_{nB_s \subset D_{\mathbb{H}^1}}\frac{1}{|B_s|}\int_{B_s}|u(x)-u_{B_s}|\;\mathrm{d}m\right)\\
&\leq  C_{ave} \sup_{B_s \subset \Omega} \frac{1}{|B_s|}\int_{B_s}|u(x)-u_{B_s}|\;\mathrm{d}m\\
&=  C_{ave} \|u\|_{{\rm BMO}^{s}(\Omega)}.
\end{align*}
This concludes the proof of \eqref{eq:BMO_s_H_norms}. Combined with this and \eqref{eq:equality_s_BMO}, part \eqref{i:BMO3} of the theorem  is now a direct consequence of Lemma~\ref{c:bmo-sub-Hei}.
\end{proof}

As mentioned in the introduction to Section~\ref{s:JacobianQC}, our goal is to show that  $\log J_f$ belongs to $\mathrm{BMO}(\Omega)$ (both with respect to $d_{\Hei}$ and $d_s$) for every quasiconformal map $f$ on the domain $\Omega$. In order to complete this goal, we need to recall and show a number of auxiliary results. This is done in Sections~\ref{s:revHol} and~\ref{s:weights}.

\subsection{Higher integrability and reverse H\"older inequality}\label{s:revHol}

Quasiconformal mappings between domains in $\mathbb{H}^n$ satisfy a higher integrability property analogous to the one established by F.\ Gehring in $\mathbb{R}^n$, see~\cite{MR0402038}. Specialized to the first Heisenberg group (endowed with the Kor\'{a}nyi distance), Theorem G in \cite{MR1317384} reads as follows.

\begin{thm}[Kor\'{a}nyi, Reimann]\label{t:GehringHeisenberg}
 For every $K\geq 1$, there exist constants $c>1$ and $\kappa>0$ depending only on $K$ such that
the Jacobian of a $K$-quasiconformal mapping $f: \Omega \to \Omega'$ between domains in $(\mathbb{H}^1,d_{\mathbb{H}^1})$ satisfies the inequality
\begin{equation}\label{eq:reverseHoelder}
\left(\Barint_{B(x,r)} J_f^{\frac{p}{4}} \;\mathrm{d}m\right)^{\frac{4}{p}} \leq \left(\frac{\kappa}{4+\kappa-p}\right)^{\frac{1}{p}} \Barint_{B(x,r)} J_f\;\mathrm{d}m,
\end{equation}
for all $p\in [4,4+\kappa)$ and $r\leq \frac{\mathrm{d}_{\mathbb{H}^1}(x,\partial \Omega)}{c}$.
\end{thm}

In this theorem, balls are defined with respect to $d_{\Hei}$. The statement of  Theorem G in~\cite{MR1317384} contains no mean value integrals, however the proofs of Propositions 19, 21 and 23 in~\cite{MR1317384} reveal that assertion \eqref{eq:reverseHoelder} holds as stated above. The proof is based on a Heisenberg version of Gehring's lemma and a reverse H\"older inequality for $J_f$.

\subsection{$A_p$-weights and distortion of balls}\label{s:weights} In this section we apply Theorem \ref{t:GehringHeisenberg} in order to show that the Jacobian of a quasiconformal map satisfies an $A_p$-weight  condition on balls which lie well inside the domain of the mapping, see Proposition \ref{p:A_p_Jacobian} for the precise statement. This is one of the key ingredients in the proof of the $\mathrm{BMO}$ property of the Jacobian (Theorem~\ref{c:logBMOdomain}).

\subsubsection{Distortion of balls}

For the proof of Proposition \ref{p:A_p_Jacobian}, we use the following auxiliary observation, which can be deduced from the local quasisymmetry property of quasiconformal mappings between domains in $\Hei$. In descriptive terms, it says that if a ball $B$ lies well enough inside the domain of a quasiconformal map $f$, then also its image lies well inside the image domain $\Omega'$ and moreover one can find an annulus around $B$ whose image looks again like a spherical annulus, where the statements can be quantified in terms of the quasiconformality constant of $f$ and the distance of $f(B)$ from $\partial \Omega'$.

\begin{proposition}\label{p:ball_dist}
For every $K\geq 1$, $c>1$, and $d\in \{d_s,d_{\Hei}\}$, there exists a constant $k>1$ such that the following holds. Whenever $f:\Omega \to \Omega'$ is a $K$-quasiconformal map between domains in $\Hei$, and $B=B(x,r)$ is a ball in $(\Hei,d)$ such that $B(x,10 k r)\subset \Omega$, then there exists a ball $B'=B(f(x),r')$ in $(\Hei,d)$ such that
\begin{enumerate}
\item\label{i:dist1} $f(B) \subset B'\subset f(kB)$,
\item\label{i:dist2} $cB' \subset \Omega'$.
\end{enumerate}
Moreover, every such ball $B'$ satisfies
\begin{equation}\label{eq:cor_ball_dist}
\mathrm{diam}(f(B))\leq \frac{2}{c-1} d(f(x),\partial \Omega')\quad \text{and}\quad d(f(x),\partial \Omega') \leq \frac{c}{c-1}  \mathrm{dist}(f(B),\partial \Omega').
\end{equation}
\end{proposition}

\begin{proof} For $k>1$ to be determined, let $B=B(x,r)$ be as in the assumptions of the proposition and set
\begin{displaymath}
r':= \sup_{\{u:\,d(x,u)=r\}} d(f(x),f(u)).
\end{displaymath}
We will verify that $B':= B(f(x),r')$ has properties (1) and (2) if $k$ is chosen sufficiently large (depending on $K$ and $c$). The inclusion $f(B)\subseteq B'$, and hence one half of the claim (1), is immediate from the choice of $r'$ and the path-connectedness of $\Hei\setminus B'$. Indeed, if there was $w\in B$ with $f(w)\in \Hei\setminus B'$, then by connecting $f(w)$ with a point in $\Hei \setminus f(kB)$ by a curve in $\Hei\setminus B'$, one could find $u'\in \partial B$ with $d(f(x),f(u'))>r'$, which contradicts the definition of $r'$.

Next, since $B(x,10 k r)\subset \Omega$, then by the discussion in Section~\ref{subs-2-3} we know that $f$ is $\eta$-quasisymmetric on $\overline{B(x,kr)}$ for a homeomorphism $\eta:[0,\infty) \to [0,\infty)$ that depends only on $K$. Let $y\in \partial B$ be such that
\begin{displaymath}
d(f(x),f(y))= r'
\end{displaymath}
and $z\in \partial B(x,kr)$ be such that
\begin{displaymath}
d(f(x),f(z))= \inf_{\{v:\; d(x,v)=kr\}} d(f(x),f(v)).
\end{displaymath}
Hence, by $\eta$-quasisymmetry,
\begin{equation}\label{eq:QS_applic}
\frac{d(f(x),f(y))}{d(f(x),f(z))}\leq \eta\left(\frac{d(x,y)}{d(x,z)}\right)= \eta\left(\frac{1}{k}\right).
\end{equation}
Since $\eta$ is an (increasing) homeomorphism, we can choose $k>1$ depending on $K$ and $c$ such that $\eta(1/k)\leq 1/c$. Moreover, as $\overline{k B} \subset \Omega$, we know by the choice of $z$ that $B(f(x),d(f(x),f(z))) \subset \Omega'$ and hence by \eqref{eq:QS_applic}
\begin{displaymath}
cr' =c d(f(x),f(y))\leq d(f(x),f(z)) \leq d(f(x),\partial \Omega'),
\end{displaymath}
so we have established \eqref{i:dist2}. Since the estimate \eqref{eq:QS_applic} for our choice of $k$ yields in particular that
\begin{displaymath}
r'= d(f(x),f(y))< d(f(x),f(z))
\end{displaymath}
and since $B(f(x),d(f(x),f(z)))\subset f(kB)$, as can be seen by the definition of $z$ and the path connectedness of $B(f(x),d(f(x),f(z)))$,
 we have also proven the second part of \eqref{i:dist1}, namely that $B'\subseteq f(kB)$. The proof of \eqref{i:dist1} and \eqref{i:dist2} is complete.

\medskip

The estimates in \eqref{eq:cor_ball_dist} are a straightforward consequence of (1) and (2). Indeed, given $B'=B(f(x),r')$ as in the first part of the proposition, we find
\begin{displaymath}
cr' \leq d(f(x),\partial \Omega') \leq r' + \mathrm{dist}(B',\partial \Omega').
\end{displaymath}
Hence,
\begin{displaymath}
\mathrm{diam}(f(B)) \leq \mathrm{diam} B'\leq \frac{2}{c-1} (c-1)r' \leq \frac{2}{c-1} \mathrm{dist}(B',\partial \Omega') \leq \frac{2}{c-1}d(f(x),\partial \Omega').
\end{displaymath}
Moreover,
\begin{displaymath}
d(f(x),\partial \Omega') \leq \mathrm{dist}(f(B),\partial \Omega') + r'\leq \mathrm{dist}(f(B),\partial \Omega') + \frac{1}{c} d(f(x),\partial \Omega'),
\end{displaymath}
which implies
\begin{displaymath}
d(f(x),\partial \Omega') \leq \frac{c}{c-1}  \mathrm{dist}(f(B),\partial \Omega').
\end{displaymath}
\end{proof}

\subsubsection{$A_p$-weights} With the preliminary ball distortion estimates at hand, we now prove the reverse H\"older property for the Jacobian of quasiconformal mappings on subdomains of the Heisenberg group endowed with the Kor\'{a}nyi distance.

\begin{proposition}\label{p:A_p_Jacobian}
Let $f:\Omega \to \Omega'$ be a $K$-quasiconformal map between domains in $(\mathbb{H}^1,d_{\mathbb{H}^1})$. Then there exist constants $C>0$ and $k>1$, depending on $K$ only, such that the weight condition
\begin{equation}\label{eq:weightJacobianHeis}
\Barint_B J_f \;\mathrm{d}m  \leq C \left(\Barint_B J_f^{-(p-4)/4}\;\mathrm{d}m\right)^{-\frac{4}{p-4}}
\end{equation}
holds for all $d_{\Hei}$-balls $B$ with $10kB\subset \Om$ and for all $p\in[4,4+\kappa)$.
\end{proposition}

 Here, $k$ denotes the constant from Proposition~\ref{p:ball_dist} if  $c$ is as in Theorem~\ref{t:GehringHeisenberg} applied to $f^{-1}$. Moreover, $\kappa$ is as in Theorem~\ref{t:GehringHeisenberg}.  In particular, the bound for $p$ depends on $K$ only.

\begin{proof} If $\Omega=\Omega'=\mathbb{H}^1$, the claim follows from Theorem~\ref{t:GehringHeisenberg} by estimates which are standard in the theory of Muckenhoupt weights, so we concentrate on the case where $\Om$ and $\Om'$ are strict subdomains in $\mathbb{H}^1$. While the proof still follows largely the same steps of reasoning as given in the proof of Lemma 5 in \cite{MR0361067} for a quasiconformal map from $\R^n$ to $\R^n$, we need to employ Proposition~\ref{p:ball_dist}
since our result is localized to sets $\Om$ and $\Om'$.

Let $c$ be the constant from Theorem~\ref{t:GehringHeisenberg} (applied to $\finv$) and $B'\subset \Om'$ be any ball such that $cB'\subset \Om'$. Since $J_{f^{-1}}(y)= J_f(f^{-1}(y))^{-1}$ for almost every $y\in \Om'$,
 Theorem~\ref{t:GehringHeisenberg} applied to $\finv$ shows that
\begin{equation}
\left(\Barint_{B'} J_{f}(\finv(y))^{-\frac{p}{4}}\,dy\right)^{\frac{4}{p}} \leq C \Barint_{B'} J_f(\finv(y))^{-1} \,dy, \label{ineq:invGehr}
\end{equation}
for $C= (\kappa/(4+\kappa-p))^{1/p}$.
 We are now in a position to apply Proposition~\ref{p:ball_dist} to $f$ with constant $c$ as defined in the beginning of the proof. Hence, we find a constant $k$ such that whenever the ball $B\subset \Om$ is such that $10kB\subset \Om$, then there exists a ball $B'\subset \Om'$ with the following properties:
 \begin{equation}
  cB'\subset \Om',\qquad  B\subset \finv(B'), \qquad \finv(B')\subset kB.\label{ball-Lemma4}
 \end{equation}
 In particular, \eqref{ineq:invGehr} applies to such $B'$. The inclusions \eqref{ball-Lemma4}
and the change of variable formula, see for instance Theorem 5.4(a) in~\cite{MR1778673}, result in the following estimate:
 \begin{equation*}
 \frac{1}{|B'|}\int_{B} J_{f}(x)^{-\frac{p}{4}}J_f(x)\,dx \leq
 \frac{1}{|B'|}\int_{\finv(B')} J_{f}(x)^{-\frac{p}{4}}J_f(x)\,dx =
 \Barint_{B'} J_{f}(\finv(y))^{-\frac{p}{4}}\,dy.
 \end{equation*}
 This and \eqref{ineq:invGehr} lead to the inequality
 \begin{equation}
 \left(\frac{1}{|B'|}\int_{B} J_{f}(x)^{-\frac{p}{4}}J_f(x)\,dx \right)^{\frac{4}{p}}\leq C \Barint_{B'} J_f(\finv(y))^{-1} \,dy = C\frac{|\finv(B')|}{|B'|}.\label{ineq2:invGehr}
 \end{equation}
 Since $|B'|=\int_{\finv(B')}J_f(x)\,dx$, we obtain from \eqref{ineq2:invGehr} that
 \[
  \left(\int_{B} J_{f}(x)^{\frac{4-p}{4}}\,dx \right)^{\frac{4}{p}}\leq C |\finv(B')|\,\left(\int_{\finv(B')}J_f(x)\,dx\right)^{\frac{4-p}{p}}.
 \]
By applying \eqref{ball-Lemma4} we get that $|\finv(B')|\leq |kB|=k^4|B|$. Thus, we arrive at the following inequality:
 \[
  \left(\int_{B} J_{f}(x)^{\frac{4-p}{4}}\,dx \right)^{\frac{4}{p}}\leq C k^4|B|^{\frac{4}{p}+\frac{p-4}{p}}\,\left(\int_{B}J_f(x)\,dx\right)^{\frac{4-p}{p}}.
 \]
 The above estimate immediately implies
 \[
  \left(\Barint_{B} J_{f}(x)^{\frac{4-p}{4}}\,dx \right)^{\frac{4}{p}}\leq Ck^4 \left(\Barint_{B}J_f(x)\,dx\right)^{\frac{4-p}{p}}
 \]
 and hence
 \[
  \Barint_{B}J_f(x)\,dx \leq \left(Ck^4\right)^{\frac{p}{p-4}} \left(\Barint_{B} J_{f}(x)^{\frac{4-p}{4}}\,dx \right)^{\frac{4}{4-p}}.
 \]
\end{proof}

Proposition \ref{p:A_p_Jacobian} essentially says that $w=J_f$ is a Muckenhoupt $A_q$-weight in the sense of \cite{MR3265363,MR0499948} for $q=1+4/(p-4)$.
This is not quite true, because the $A_q$-weight condition is verified only for those Kor\'{a}nyi balls $B$ which lie well inside the domain $\Omega$ in the sense that $10k B \subset \Omega$. It is for this reason that one cannot conclude that $J_f$ is an $A_{\infty}$-weight if $f$ is defined on an arbitrarily given domain $\Omega$ in $\mathbb{H}^1$. This also means that one cannot directly conclude that $\log J_f \in \mathrm{BMO}(\Omega)$, but as we will see in the next section, the latter statement still holds true.

\subsection{$\mathrm{BMO}(\Omega)$ property of $\log J_f$}\label{s:logJacobian-proof}
Lemma 3 in \cite{MR0361067} characterizes functions whose logarithm lies in $\mathrm{BMO}(\R^n)$ via an integral inequality.
We generalize part of this result to functions defined on a domain $\Om\subset \Hei$.
Since in our discussion we only need the implication in one direction, and only for functions which arise as quasiconformal Jacobians, we state the following result.

\begin{proposition}\label{p:Weight_log_BMO}
Let $\Omega$ be a domain in $\mathbb{H}^1$ and $f:\Om\to f(\Om)\subseteq\Hei$ be a $K$-quasiconformal mapping. Then
$\log J_f\in \mathrm{BMO}^{\Hei}_{loc}(\Omega)$ with $\|\log J_f\|_{\mathrm{BMO}_{10k,loc}^{\Hei}(\Omega)}$ bounded by a constant that depends only on $K$.
\end{proposition}

\begin{proof}
The proof is similar to the one for Lemma 3 in \cite{MR0361067} and, therefore, we omit the details. We start from the inequality
\begin{equation*}
\Barint_B J_f \;\mathrm{d}m  \leq C \left(\Barint_B J_f^{-(p-4)/4}\;\mathrm{d}m\right)^{-\frac{4}{p-4}},
\end{equation*}
which, by Proposition~\ref{p:A_p_Jacobian}, holds for all balls $B$ in $\Omega$ such that $10kB\subset \Om$, with $C$ and the bounds for $p$ determined by Theorem~\ref{t:GehringHeisenberg}. The crucial estimate, giving the $\mathrm{BMO}_{loc,10k}^{\mathbb{H}^1}(\Omega)$-norm bound, cf.~\cite[(2.7)]{MR0361067},  reads
\[
 \Barint_{B}\left|\log J_f-\left(\log J_f\right)_{B}\right|\;\mathrm{d}m\leq \frac{1}{s}\log(C^s+C^{-s})
\]
for $s=\min \{1,(p-4)/4\}$ and the constant $C$ depending on $K$, see the discussion of the constants in Theorem~\ref{t:GehringHeisenberg} and Proposition~\ref{p:A_p_Jacobian}.
\end{proof}

As a consequence of the above discussion, we deduce the main result of this section.
\begin{proof}[Proof of Theorem~\ref{c:logBMOdomain}]
 By Proposition~\ref{p:Weight_log_BMO}, we have that $\log J_f\in \mathrm{BMO}^{\mathbb{H}^1}_{loc}(\Omega)$ with a quantitative upper bound for its $\mathrm{BMO}_{10k,loc}^{\mathbb{H}^1}(\Omega)$-norm that depends on $K$ only. Theorem~\ref{t:BMO_BMO_loc} allows us to conclude that $\log J_f \in \mathrm{BMO}(\Omega)$ and to bound its $\mathrm{BMO}(\Omega)$-norm (both with respect to
 the Kor\'{a}nyi and the sub-Riemannian metric) from above in terms of $K$.
\end{proof}


\section{A Koebe theorem}\label{s:Koebe}

The main purpose of this section is to prove the key result of this work: the Koebe theorem for quasiconformal mappings between open sets in $\Hei$, see Theorem~\ref{t:KoebeHeis}. Before providing the proof of this result we need further auxiliary observations.

The following lemma is a counterpart of \cite[Lemma 5.10]{MR861687}.
For our purposes it suffices to consider balls centered at one point, but we consider arbitrary domains in $\mathbb{H}^1$ instead of disks in $\mathbb{R}^2$.
The proof goes the same way, and so we omit it.

\begin{lemma}\label{l:ball_ratio} The following statement holds both for the Kor\'{a}nyi distance $d_{\mathbb{H}^1}$ and the sub-Riemannian distance $d_s$:
Let $\Omega$ be a domain in $\mathbb{H}^1$ and let $u\in \mathrm{BMO}(\Omega)$. Then for all balls $B_2 \subset B_1 \subset \Omega$ centered at a point $z\in \Omega$, one has
\begin{equation}
|u_{B_1}-u_{B_2}|\leq \frac{e}{2} \left(\log \frac{|B_1|}{|B_2|}+1\right) \|u\|_{\ast}.  \label{BmoMeanL}
\end{equation}
\end{lemma}
Here and in the following, the logarithm is taken with respect to the basis $e$.

In the literature, the definition of the quantity $a_f(x)$ for a $K$-quasiconformal map $f:\Omega \to \Omega'$ between domains $\Omega,\Omega'  \subsetneq \mathbb{R}^n$ involves taking averages of $\log J_f$ over either $B(x,d(x,\partial \Omega))$ or over $B(x,d(x,\partial \Omega)/2)$. It turns out that the resulting quantities are comparable. In $\mathbb{H}^1$, it is for technical reasons sometimes more convenient to work with $B(x,d(x,\partial \Omega)/L)$ for a number $1<L<\infty$ which depends only on $K$ (for instance in the proof of Proposition \ref{c:dist_est} below). The following lemma shows that this is possible.

\begin{lemma}\label{l:a_f_comparison} Let $d\in \{d_s,d_{\mathbb{H}^1}\}$ and $K\geq 1$. Given $1\leq L <\infty$, there exists  a constant $1\leq C_{K,L}<\infty$, such that for every $K$-quasiconformal mapping $f:\Omega \to \Omega'$ between domains $\Omega,\Omega'  \subsetneq (\mathbb{H}^1,d)$, one has
\begin{displaymath}
C_{K,L}^{-1} \exp\left(\tfrac{1}{4}\left(\log J_f\right)_{B_1}\right) \leq
\exp\left(\tfrac{1}{4}\left(\log J_f\right)_{B_2}\right) \leq C_{K,L}
\exp\left(\tfrac{1}{4}\left(\log J_f\right)_{B_1}\right),
\end{displaymath}
where
\begin{displaymath}
B_1:=B(x,d(x,\partial \Omega))\quad\text{and}\quad B_2:= B(x,d(x,\partial \Omega)/L).
\end{displaymath}
\end{lemma}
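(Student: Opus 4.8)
The plan is to pass to logarithms and reduce the statement to a uniform bound on $|(\log J_f)_{B_1}-(\log J_f)_{B_2}|$. Writing $u:=\log J_f$ and noting that $B_2\subset B_1\subset\Omega$ are concentric balls centered at $x$, the hypotheses of Lemma~\ref{l:ball_ratio} are met, so applying it gives
\begin{displaymath}
|u_{B_1}-u_{B_2}|\leq \frac{e}{2}\left(\log\frac{|B_1|}{|B_2|}+1\right)\|u\|_{\ast}.
\end{displaymath}
Once the right-hand side is bounded by a constant depending only on $K$, say $4\log C$, dividing by $4$ and exponentiating yields both inequalities of the lemma at once, since then
\begin{displaymath}
\exp\left(\tfrac14 (u_{B_2}-u_{B_1})\right)\in [C^{-1},C].
\end{displaymath}

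Next I would compute the measure ratio exactly. For a left-invariant homogeneous distance $d\in\{d_s,d_{\mathbb{H}^1}\}$ one has $B(x,r)=x\cdot\delta_r(B(0,1))$, so by left-invariance of the Haar measure $m$ and the scaling $|\delta_r(A)|=r^4|A|$ one obtains $|B(x,r)|=r^4|B(0,1)|$. Since $B_1=B(x,d(x,\partial\Omega))$ and $B_2=B(x,d(x,\partial\Omega)/L)$, this gives $|B_1|/|B_2|=L^4$ and hence $\log(|B_1|/|B_2|)=4\log L$, a quantity depending only on $L$ and therefore only on $K$. I would then invoke Theorem~\ref{c:logBMOdomain}, which guarantees $\log J_f\in\mathrm{BMO}(\Omega)$ with $\|\log J_f\|_{\ast}\leq C_K$ for a constant $C_K$ depending only on $K$ and valid for both the Kor\'anyi and the sub-Riemannian distance, as required by the statement.

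Substituting the two bounds into the estimate from Lemma~\ref{l:ball_ratio} gives
\begin{displaymath}
|u_{B_1}-u_{B_2}|\leq \frac{e}{2}(4\log L+1)\,C_K,
\end{displaymath}
whose right-hand side depends only on $K$; setting $\log C:=\tfrac14\cdot\tfrac{e}{2}(4\log L+1)C_K$ then completes the argument. There is no substantial obstacle here, as the lemma is essentially a direct combination of Lemma~\ref{l:ball_ratio} and Theorem~\ref{c:logBMOdomain}. The only two points requiring care are confirming that the measure ratio is \emph{exactly} $L^4$, so that it contributes only a $K$-dependent constant and no hidden dependence on $x$ or on $d(x,\partial\Omega)$, and checking that the BMO bound from Theorem~\ref{c:logBMOdomain} applies uniformly for whichever of the two metrics is in force.
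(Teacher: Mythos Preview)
Your proof is correct and follows essentially the same approach as the paper: both combine the BMO estimate of Lemma~\ref{l:ball_ratio} with the bound on $\|\log J_f\|_{\ast}$ from Theorem~\ref{c:logBMOdomain}, then exponentiate. Your version is in fact slightly more explicit, computing the measure ratio as exactly $L^4$ via homogeneity rather than merely invoking Ahlfors regularity.
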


\begin{proof} If $u\in \mathrm{BMO}(\Omega)$, then for a given $1<L<\infty$, Ahlfors regularity of the measure $m$ and \eqref{BmoMeanL} show that
\begin{equation}\label{eq:BMO_estimate}
|u_{B_1}-u_{B_2}|\leq c_1(L) \|u\|_{\ast}, \quad \text{where}\quad  c(L)= \frac{e}{2} (4 \log(L)+1).
\end{equation}

{\color{red} }
By Theorem~\ref{c:logBMOdomain}, if $f$ is $K$-quasiconformal, then $\log J_f \in \mathrm{BMO}(\Omega)$ and $\|\log J_f\|_{\ast}$ is bounded by a constant $c_2(K)$ depending on $K$ only. Hence, for $u=\log J_f$,  \eqref{eq:BMO_estimate} gives
\begin{align*}
\exp\left(\tfrac{1}{4}\left(\log J_f\right)_{B_2}\right)& \leq \exp\left(\tfrac{1}{4}\left(\log J_f\right)_{B_1}+ c_{K,L} \right)=C_{K,L} \exp\left(\tfrac{1}{4}\left(\log J_f\right)_{B_1}\right)
\end{align*}
and
\begin{align*}
\exp\left(\tfrac{1}{4}\left(\log J_f\right)_{B_1}\right)& \leq \exp\left(\tfrac{1}{4}\left(\log J_f\right)_{B_2}+ c_{K,L}\right)=C_{K,L} \exp\left(\tfrac{1}{4}\left(\log J_f\right)_{B_2}\right)
\end{align*}
 where $c_{K,L}=\frac{1}{4}c_1(L)c_2(K)$ and $C_{K,L}:= \exp(c_{K,L})$. The statement of the lemma follows.
\end{proof}

%
%
%
%

The following proposition is due to Soultanis and Williams, \cite[Corollary 5.3]{MR3029176}, for domains with two-point boundary condition in geodesic metric measure spaces of globally $Q$-bounded geometry ($Q>1$). (See also \cite[p.627]{MR3029176} for a remark on these assumptions.)  Since $(\mathbb{H}^1,d_s,m)$ is unbounded, of globally $4$-bounded geometry and  geodesic, their result applies in our setting. The corresponding result for the metric $d_{\Hei}$ instead of $d_s$ then follows immediately from the comparability of the metrics, more precisely from \eqref{eq:Whitney_ball_inclusion}. The constant ``$10$'' in the statement of the proposition is not optimal, but convenient since it works simultaneously for $d_s$ and $d_{\Hei}$.

\begin{proposition}\label{p:egg_yolk_cor} Let $d\in \{d_s,d_{\Hei}\}$ and $K\geq 1$. Then there exists a homeomorphism $\eta: [0,+\infty) \to [0,+\infty)$ such that the following holds for all $K$-quasiconformal maps $f:\Omega \to \Omega'$ between domains in $\Hei$ and for all $x\in \Omega$. If $y\in \Omega$ satisfies $d(y,x) \leq d(x,\partial \Omega)/10$, then
\begin{displaymath}
\frac{d(f(x),f(y))}{d(f(x),\partial \Omega')}\leq \eta \left(\frac{d(x,y)}{d(x,\partial \Omega)}\right).
\end{displaymath}
\end{proposition}

The proposition exploits the connectedness and doubling property of balls in $(\Hei,d)$ and is a stronger statement than what one can derive merely based on local $\eta$-quasisymmetry in arbitrary metric spaces. The local quasisymmetry of quasiconformal maps coupled with the reverse H\"older property for their Jacobians also yields the following distance estimate.

\begin{proposition}\label{c:dist_est}
For every $1\leq K< \infty$, there exists a constant $\lambda\geq 1$ such that the following holds.
If $0<a<b$ and $f:\Omega \to \Omega'$ is a $K$-quasiconformal mapping between domains $\Omega,\Omega' \subsetneq \mathbb{H}^1$, and $\|\log J_f\|_{\ast} \leq (2/e) a$, then for all $z_1,z_2\in \Omega$ such that
\begin{equation*}
d_{\mathbb{H}^1}(z_1,z_2)\leq \tfrac{1}{2\lambda} d_{\mathbb{H}^1}(z_1,\partial \Omega),
\end{equation*}
one has
\begin{equation}\label{eq:distance_est}
d_{\mathbb{H}^1}(f(z_1),f(z_2))\leq c a_f(z_1) d_{\mathbb{H}^1}(z_1,\partial \Omega)^a d_{\mathbb{H}^1}(z_1,z_2)^{1-a},
\end{equation}
where the constant $c$ depends only on $K$ and the bound $b$ for $a$.
\end{proposition}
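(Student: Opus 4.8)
Write $R:=d_{\mathbb{H}^1}(z_1,\partial\Omega)$ and $\rho:=d_{\mathbb{H}^1}(z_1,z_2)$, so that the hypothesis reads $\rho\leq R/(2\lambda)$ and $B(z_1,\rho)\subset B(z_1,R)=B(z_1)\subset\Omega$. The plan is to control $d_{\mathbb{H}^1}(f(z_1),f(z_2))$ by the ``radius'' of the image $f(B(z_1,\rho))$, to convert that radius into the volume $|f(B(z_1,\rho))|=\int_{B(z_1,\rho)}J_f\,\mathrm{d}m$, to replace this volume by $\exp\big((\log J_f)_{B(z_1,\rho)}\big)$ through the $A_\infty$-property of $J_f$, and finally to compare the average of $\log J_f$ over $B(z_1,\rho)$ with its average over $B(z_1,R)$ --- the latter being $4\log a_f(z_1)$ --- by means of the telescoping estimate of Lemma~\ref{l:ball_ratio}. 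The smallness assumption $\|\log J_f\|_{\ast}\leq(2/e)a$ is precisely what turns this last comparison into the power $(R/\rho)^{a}$. I would fix $\lambda\geq 1$, depending only on $K$, large enough that the two localized tools below apply: first, that $B(z_1,\rho)$ lies inside the egg yolk of $z_1$, and second, that $3kB(z_1,\rho)\subset\Omega$, where $k=k(K)$ is the constant from Proposition~\ref{p:A_p_Jacobian}. Since $\rho\leq R/(2\lambda)$, both conditions require only that $2\lambda$ exceed fixed multiples of the egg-yolk constant and of $k$.

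The geometric core, and the step I expect to be the main obstacle, is the passage from the metric displacement $d_{\mathbb{H}^1}(f(z_1),f(z_2))$ to the volume $|f(B(z_1,\rho))|$. By the egg yolk principle (Proposition~\ref{p:egg_yolk}, transferred to $d_{\mathbb{H}^1}$ via Remark~\ref{r:egg_yolk_koranyi}), $f$ restricted to $B(z_1,\rho)$ is $\eta$-quasisymmetric with $\eta$ depending only on $K$. Consequently the quantities $d_{\mathbb{H}^1}(f(z_1),f(q))$ for $q$ with $d_{\mathbb{H}^1}(z_1,q)=\rho$ are all comparable: setting $L:=\sup_{q}d_{\mathbb{H}^1}(f(z_1),f(q))$ and $l:=\inf_{q}d_{\mathbb{H}^1}(f(z_1),f(q))$ over this sphere, quasisymmetry applied to the triple realizing $L$ and $l$ gives $L\leq\eta(1)\,l$. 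Because $f$ is a homeomorphism and balls in $\Hei$ are connected, the open connected set $f(B(z_1,\rho))$ contains $f(z_1)$ and has its boundary in $f(\partial B(z_1,\rho))$, whence $B(f(z_1),l)\subseteq f(B(z_1,\rho))$; Ahlfors $4$-regularity then yields $|f(B(z_1,\rho))|\geq c\,l^{4}$. Since $z_2\in\overline{B(z_1,\rho)}$ with $d_{\mathbb{H}^1}(z_1,z_2)=\rho$, we conclude $d_{\mathbb{H}^1}(f(z_1),f(z_2))\leq L\leq\eta(1)\,l\leq C(K)\,|f(B(z_1,\rho))|^{1/4}$. The delicate point is that one must bound the outer radius $L$ from above and the volume from below through the inner radius $l$ simultaneously and uniformly in $K$; this two-sided ``roundness'' is exactly what quasisymmetry provides.

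It remains to estimate $|f(B(z_1,\rho))|^{1/4}$. The change of variables formula for quasiconformal maps gives $|f(B(z_1,\rho))|=\int_{B(z_1,\rho)}J_f\,\mathrm{d}m$. Since $3kB(z_1,\rho)\subset\Omega$, Proposition~\ref{p:A_p_Jacobian} shows that $J_f$ satisfies an $A_q$-condition on $B(z_1,\rho)$, hence the $A_\infty$-estimate $\tfrac{1}{|B(z_1,\rho)|}\int_{B(z_1,\rho)}J_f\,\mathrm{d}m\leq C\exp\big((\log J_f)_{B(z_1,\rho)}\big)$ with $C=C(K)$. Combined with $|B(z_1,\rho)|\leq C\rho^{4}$ this gives $|f(B(z_1,\rho))|^{1/4}\leq C\rho\,\exp\big(\tfrac14(\log J_f)_{B(z_1,\rho)}\big)$. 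Finally I would apply Lemma~\ref{l:ball_ratio} to $u=\log J_f\in\mathrm{BMO}(\Omega)$ (legitimate by Theorem~\ref{c:logBMOdomain}) on the concentric balls $B(z_1,\rho)\subset B(z_1,R)$: Ahlfors regularity gives $|B(z_1,R)|/|B(z_1,\rho)|=(R/\rho)^{4}$, so that, inserting $\|\log J_f\|_{\ast}\leq(2/e)a$, one obtains $\tfrac14\big|(\log J_f)_{B(z_1,\rho)}-(\log J_f)_{B(z_1,R)}\big|\leq a\log(R/\rho)+a/4$. Recalling $a_f(z_1)=\exp\big(\tfrac14(\log J_f)_{B(z_1,R)}\big)$ from Definition~\ref{d:a_f} and using $a<b$, this reads $\exp\big(\tfrac14(\log J_f)_{B(z_1,\rho)}\big)\leq e^{b/4}(R/\rho)^{a}a_f(z_1)$. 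Chaining the three estimates yields $d_{\mathbb{H}^1}(f(z_1),f(z_2))\leq C\rho\,(R/\rho)^{a}a_f(z_1)=c\,a_f(z_1)R^{a}\rho^{1-a}$, which is \eqref{eq:distance_est} with a constant $c$ depending only on $K$ and $b$, as required.
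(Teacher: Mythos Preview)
Your proof is correct and follows essentially the same strategy as the paper's: choose $\lambda$ so that both the egg yolk principle and the localized $A_p$-condition of Proposition~\ref{p:A_p_Jacobian} apply, use quasisymmetry to bound $d_{\mathbb{H}^1}(f(z_1),f(z_2))$ by the inner radius $l$ and hence by $|f(B(z_1,\rho))|^{1/4}$, convert the volume average of $J_f$ into $\exp\big((\log J_f)_{B(z_1,\rho)}\big)$, and then telescope via Lemma~\ref{l:ball_ratio}. Two minor streamlinings relative to the paper: you apply the $A_p\Rightarrow A_\infty$ step (which is just Jensen for $t\mapsto e^{-bt}$) directly on $B(z_1,\rho)$, whereas the paper passes through the doubled ball $Q=2B_2$ and then undoes this with a splitting argument; and you compare against the full ball $B(z_1,R)$, obtaining $a_f(z_1)$ immediately, whereas the paper compares against $B(z_1,R/\lambda)$ and tacitly relies on Lemma~\ref{l:a_f_comparison}.
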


The proof of Proposition \ref{c:dist_est} will show that $c$ in the statement can be obtained as a monotone increasing function of $b$. Also, for large values of $a$, the conclusion in \eqref{eq:distance_est} is not very informative: assume for illustrative purposes  that $d_{\mathbb{H}^1}(z_1,z_2)= \tfrac{1}{2\lambda} d_{\mathbb{H}^1}(z_1,\partial \Omega)$. Then the right-hand side of \eqref{eq:distance_est} is comparable to $\lambda^{a-1} d_{\mathbb{H}^1}(z_1,\partial \Omega)$. If $a$ becomes large, then also the multiplicative constant in this upper bound tends to infinity.
For these reasons, in our application we will be interested in having a small upper bound $b$ for $a$.

Proposition \ref{c:dist_est} is a counterpart for Lemma 5.15 in \cite{MR861687}, for arbitrary subdomains of $\mathbb{H}^1$ instead of disks in $\mathbb{R}^2$.
A significant difference in the proof arises from the fact that (i) we do not know whether the map $f$ extends to a $K_1(K)$-quasiconformal map on the one point compactification of $\mathbb{H}^1$, and (ii) we do not have a Mori distortion theorem at our disposal. We compensate for this by resorting to the local quasisymmetry property of $f$. This, and the ball admissibility condition for \eqref{eq:weightJacobianHeis}, accounts for the presence of the constant $\lambda$ in the statement of Proposition \ref{c:dist_est}. Lemma 5.15 in \cite{MR861687} contains an analogous statement in $\mathbb{R}^2$ with $\lambda=1$. The weaker formulation of Proposition \ref{c:dist_est} influences the proof of Koebe's theorem, where now $r_2\leq r_1/(2\lambda)$ has to be used instead of $r_2=r_1/2$.
In light of Lemma  \ref{l:a_f_comparison}, this change is immaterial. Moreover, working with arbitrary subdomains, rather than just disks or balls,  has the advantage that we only have to define $a_f$ and $\|\cdot\|_{\ast}$ for one domain, namely $\Omega$. Also note that if $z\in B\subset \Omega$, then $d(z,\partial B)\leq d(z,\partial \Omega)$, and for our purpose an estimate in terms of $d(z,\partial \Omega)$ is sufficient.

\begin{proof}[Proof of Proposition~\ref{c:dist_est}] Throughout the proof we will work with the Kor\'{a}nyi distance $d=d_{\mathbb{H}^1}$.
The idea is to choose $\lambda$ large enough so that for $z_1$ and $z_2$ as in the assumptions, we have that:
\begin{enumerate}
\item $f$ is quasisymmetric on $\overline{B(z_1,d(z_1,z_2))}$,
\item $B(z_1,2d(z_1,z_2))$ is admissible for the reverse H\"older inequality for $J_f$ \eqref{eq:weightJacobianHeis}.
\end{enumerate}

The requirements are satisfied under the following assumptions:
\begin{enumerate}
\item\label{i:lambda1} $\lambda \geq 5$
 (egg yolk principle for $d_{\mathbb{H}^1}$),
\item\label{i:lambda2} $\lambda \geq 10k$, where $k$ is as in the admissibility condition for  \eqref{eq:weightJacobianHeis}.
\end{enumerate}

Let us choose $\lambda\geq 1$ as the smallest constant for which \eqref{i:lambda1} and \eqref{i:lambda2} hold. Such a $\lambda$ is finite and depends only on $K$. We set
\begin{displaymath}
r_1:= d(z_1,\partial \Omega)/\lambda\quad\text{and}\quad r_2:= d(z_1,z_2)
\end{displaymath}
and denote $B_i:=B(z_1,r_i)$ for $i\in \{1,2\}$. Moreover, we write
\begin{displaymath}
s_2:= \min_{d(z,z_1)=r_2}d(f(z),f(z_1)).
\end{displaymath}
Since $f|_{\overline{B(z_1,d(z_1,z_2))}}$ is $H$-quasisymmetric for a constant $H$ which depends only on $K$, we find
\begin{displaymath}
d(f(z_1),f(z_2))\leq H s_2.
\end{displaymath}
This implies
\begin{equation}\label{eq:quotient_ineq}
\left(\frac{d(f(z_1),f(z_2))}{d(z_1,z_2)}\right)^4 \leq \left(\frac{H s_2}{r_2}\right)^4
\leq H^4 \frac{|f(B_2)|}{|B_2|}.
\end{equation}
In order to further estimate this from above, an analog of Lemma 5.14 in \cite{MR861687} would be useful. We concentrate on a manageable special case, which is sufficient for our application. Namely, using the same notation as above, we will prove that
\begin{equation}\label{eq:5.14}
\frac{|f(B_2)|}{|B_2|}\leq C' \exp \left(\frac{1}{|B_2|}\int_{B_2}\log J_f\;\mathrm{d}m\right),
\end{equation}
where $C'$ is a constant depending only on $K$. To show this, we consider the enlarged ball
\begin{displaymath}
Q:= 2 B_2:= B(z_1,2d(z_1,z_2)).
\end{displaymath}
By our choice of $\lambda$, the ball $Q$ is admissible for  \eqref{eq:weightJacobianHeis}, and we deduce  for a suitable constant $c_1>0$ (depending on $K$), that
\begin{equation}\label{eq:est_meas_ratio}
\frac{|f(Q)|}{|Q|}\leq c_1 \left(\frac{1}{|Q|} \int_Q J_f^{-(p-4)/4}\;\mathrm{d}m\right)^{-4/(p-4)}\\
\leq c_1 \exp\left(\frac{1}{|Q|}\int_Q \log J_f \;\mathrm{d}m\right),
\end{equation}
where we have applied Jensen's inequality to the convex function $\varphi(x)=e^{-bx}$ for $b=(p-4)/4$ in the last step.
The remaining steps to deduce \eqref{eq:5.14} consist of a computations analogous to the proof of \cite[Lemma 5.14]{MR861687}. First, we observe that
\begin{displaymath}
\Barint_Q  \log J_f \;\mathrm{d}m = \frac{1}{2^4} \Barint_{B_2} \log J_f \;\mathrm{d}m +
\left(1-\frac{1}{2^4}\right) \Barint_{Q\setminus B_2} \log J_f\;\mathrm{d}m
\end{displaymath}
and
\begin{displaymath}
\Barint_{Q\setminus B_2} \log J_f\;\mathrm{d}m \leq \log \left(\Barint_{Q\setminus B_2} J_f\;\mathrm{d}m\right) =
\log \left(\frac{2^4}{2^4-1}\Barint_Q J_f\;\mathrm{d}m\right)\leq
\log \left(\frac{2^4}{2^4-1}\frac{|f(Q)|}{|Q|}\right).
\end{displaymath}
Inserting these estimates in \eqref{eq:est_meas_ratio}, we find that
\begin{displaymath}
\frac{1}{2^4} \log \frac{|f(Q)|}{|Q|}\leq \log c_1 +  \frac{1}{2^4}\Barint_{B_2}\log J_f \;\mathrm{d}m + \frac{2^4-1}{2^4}\log \frac{2^4}{2^4-1},
\end{displaymath}
which yields \eqref{eq:5.14} since $|f(B_2)|\leq |f(Q)|$ and $|Q|=2^4 |B_2|$.

Combining \eqref{eq:quotient_ineq} with \eqref{eq:5.14}, we deduce that
\begin{equation}\label{eq:dist_est}
\left(\frac{d(f(z_1),f(z_2))}{d(z_1,z_2)}\right)^4 \leq c_2 \exp \left((\log J_f)_{B_2}\right)
\end{equation}
for a constant $c_2$ depending only on $K$.
The rest of the argument is similar to the proof of \cite[Lemma 5.15]{MR861687}, with the factor $1/2$ replaced by $1/4$.
By Lemma \ref{l:ball_ratio} applied to $u=\log J_f$, we have
\begin{displaymath}
\left| (\log J_f)_{B_1}-(\log J_f)_{B_2} \right|\leq \left(\log \frac{|B_1|}{|B_2|}+1\right) a
= 4a \log \frac{r_1}{r_2} +a.
\end{displaymath}
Hence
\begin{displaymath}
\frac{1}{4} (\log J_f)_{B_2} \leq \frac{1}{4} (\log J_f)_{B_1}  +  a \log \frac{r_1}{r_2}+ \frac{b}{4}.
\end{displaymath}
Combined with \eqref{eq:dist_est}, this shows that
\begin{align*}
d(f(z_1),f(z_2))&\leq c_2^{1/4}\exp\left(\frac{1}{4}(\log J_f)_{B_2}\right)d(z_1,z_2) \\
& \leq c_2^{1/4} \exp(b/4) \exp\left(\frac{1}{4}(\log J_f)_{B_1}\right)d(z_1,\partial \Omega)^a d(z_1,z_2)^{1-a},
\end{align*}
which concludes the proof of the proposition.
\end{proof}

Finally we are in a position to prove the Koebe type theorem.

\begin{proof}[Proof of Theorem~\ref{t:KoebeHeis}]
As remarked in the introduction, it suffices to prove the theorem for the Kor\'{a}nyi distance $d=d_{\mathbb{H}^1}$, as then the corresponding statement for $d_s$ follows.

Let us first observe that the assumption $\Om\subsetneq \Hei$ implies that $\Om'=f(\Om)\subsetneq \Hei$.
Indeed, this is a consequence of the fact that $\Hei$ is not quasiconformally equivalent to any proper subdomain of $\Hei$ by Theorem 13.1 in~\cite{tys2001}. The result in \cite{tys2001} is formulated for  $Q$-Loewner spaces and locally quasisymmetric embeddings, hence it applies in particular in our setting.

We fix an arbitrary point $x_1\in \Omega$ and prove estimate \eqref{eq:KoebeEst} for $x=x_1$. To this end, we define
\begin{displaymath}
r_1:=d(x_1,\partial \Omega)\quad\text{and}\quad d_1:= d(f(x_1),\partial\Omega').
\end{displaymath}
Note that both $r_1, d_1\not=\infty$, as $\Om, \Om'\subsetneq \Hei$.

Set further
\begin{displaymath}
r_2:= r_1/m \quad\text{and}\quad d_2:=\max_{d(x_1,x)=r_2}d(f(x_1),f(x)),
\end{displaymath}
where $m= \max\{ 2\lambda, 10 k \}$ with $k$ as in Proposition \ref{p:ball_dist} applied with $c=10$, and $\lambda$ as in Proposition \ref{c:dist_est}.  Let further $x_2\in \Omega$ be a point which realizes the maximum in $d_2$, that is $d(x_1,x_2)=r_2$ and $d(f(x_1),f(x_2))=d_2$.

We denote
\begin{displaymath}
B_1:=B(x_1,r_1)\quad\text{and}\quad B_2:=B(x_1,r_2).
\end{displaymath}

We will show that there exists a positive and finite constant $c_1=c_1(K)$ such that
\begin{equation}\label{eq:Step1_main_proof}
\frac{1}{c_1} \leq \frac{d_1}{d_2} \leq c_1.
\end{equation}
The first inequality will be obtained by applying Proposition \ref{p:egg_yolk_cor} to $f$ and for this application it would suffice to know that $m\geq 10$. However, to derive the upper bound in \eqref{eq:Step1_main_proof}, we will have to apply  Proposition \ref{p:egg_yolk_cor} to the inverse map $f^{-1}$, and for this we need to know that the \emph{image} of $B_2$ is still contained in a ball that is admissible for Proposition \ref{p:egg_yolk_cor}. This is why we require that $m\geq 10 k$. The assumption $m\geq 2\lambda$ is used only in the second part of the theorem.

Proposition \ref{p:egg_yolk_cor}  applied to $f$, $x=x_1$ and $y=x_2$ yields
\begin{displaymath}
\frac{d(f(x_1),f(x_2))}{d(f(x_1), \partial \Omega')}\leq \eta\left(\frac{d(x_1,x_2)}{d(x_1,\partial \Omega)}\right)
\end{displaymath}
and hence, by the choice of $x_2$,
\begin{displaymath}
\frac{d_2}{d_1}\leq \eta\left(\frac{r_2}{r_1}\right)= \eta\left(\frac{1}{m}\right).
\end{displaymath}

To obtain the second bound in \eqref{eq:Step1_main_proof}, we apply  Proposition \ref{p:egg_yolk_cor}   to the ($K$-quasiconformal) map $f^{-1}$, $x=f(x_1)$ and $y=f(x_2)$. Here we note that $m\geq 10 k$ and $k$ has been chosen such that
\begin{displaymath}
f(x_2) \in f(B_2) \subset B'
\end{displaymath}
for some ball $B'$ centered at $f(x_1)$ with the property that $10 B'\subset \Omega'$. Hence Proposition \ref{p:egg_yolk_cor} is indeed applicable for the points $f(x_1)$ and $f(x_2)$ and we find
\begin{displaymath}
\frac{d(x_1,x_2)}{d(x_1,\partial \Omega)}\leq \eta\left(\frac{d(f(x_1),f(x_2))}{d(f(x_1),\partial \Omega')}\right).
\end{displaymath}
This yields the following bound:
\begin{displaymath}
\eta^{-1}\left(\frac{1}{m}\right)=\eta^{-1}\left(\frac{r_2}{r_1}\right)\leq \frac{d_2}{d_1}.
\end{displaymath}

%
%

We are now able to deduce an upper bound for $a_f(x_1)$, analogously to \cite[(2.7)]{MR777305}. Indeed, since $f(B_2)\subset B(f(x_1),d_2)$, we find by Jensen's inequality and \eqref{eq:Step1_main_proof} that
\begin{align*}
(\log J_f)_{B_2}\leq \log \left(\frac{|f(B_2)|}{|B_2|}\right)\leq 4 \log \frac{d_2}{r_2}\leq 4 \left(\log \frac{m c_1 d_1}{r_1}\right).
\end{align*}

Combining this with Theorem \ref{c:logBMOdomain} and Lemma \ref{l:ball_ratio} (applied to $u=J_f$), we find
\begin{displaymath}
(\log J_f)_{B_1}\leq 4 \left(\log\left(\frac{m c_1 d_1}{r_1}\right) + c_2\right),
\end{displaymath}
for a constant $c_2$ which depends on $K$ only. Thus,
\begin{equation}\label{eq:upperbound_af}
a_f(x_1) \leq \frac{m c_1 d_1}{r_1}\exp(c_2).
\end{equation}

The next step is to apply Proposition \ref{c:dist_est} for $z_1=x_1$ and $z_2=x_2$ in order to find a lower bound for $a_f(x_1)$. In this way we obtain, for constants $a$ and $c$ bounded in terms of $K$, that
\begin{align*}
d_2= d(f(x_1),f(x_2))\leq c a_f(x_1) d(x_1,\partial \Omega)^a d(x_1,x_2)^{1-a}\leq \left(\frac{1}{m}\right)^{1-a}c a_f(x_1)r_1.
\end{align*}
By \eqref{eq:Step1_main_proof} we can bound $d_2$ from below by $d_1/c_1$, which yields the desired lower bound for $a_f(x_1)$ and thus, together with the upper bound in \eqref{eq:upperbound_af}, concludes the proof.

\end{proof}

\section{Applications}\label{s:applications}

In this section we discuss applications of Theorem \ref{t:KoebeHeis}. Section \ref{subs:comp} contains analytic results regarding the horizontal derivative of a quasiconformal mapping. Results in the spirit of the ones in Sections \ref{ss:diam} and \ref{sec:metrics} have been obtained by H.\ Len Ruth Jr.\ in his PhD thesis, \cite[Section 3.7]{LRj}, for quasisymmetric mappings in a more abstract setting and for the quantity $d(f(\cdot),\partial \Omega')/d(\cdot,\partial \Omega)$. By our version of the Koebe theorem, $d(f(\cdot),\partial \Omega')/d(\cdot,\partial \Omega)$ is comparable to $a_f$ for quasiconformal mappings between domains in $\mathbb{H}^1$. In this sense, Lemma 3.7.4 and Proposition 3.7.5 in \cite{LRj} are quasisymmetric counterparts of our Propositions~\ref{t:diam} and~\ref{p:a_f_conf_density}, respectively. Since a quasiconformal map $f$ on a subdomain $\Omega \subset \mathbb{H}^1$ is in general only locally quasisymmetric, our results do not follow directly from the ones in \cite{LRj}.
We give direct proofs in the quasiconformal category, which do not rely on the results in \cite{LRj}, but on similar proof arguments. The specific setting of the sub-Riemannian Heisenberg group allows us to illustrate the sharpness of Proposition~\ref{t:diam}  with an example and to formulate our results with less additional assumptions than in~\cite[Section 3.7]{LRj}. In particular, Proposition~\ref{p:a_f_conf_density} holds for arbitrary, not necessarily Ahlfors regular, domains.
It shows that $a_f$ is a conformal density on $\Omega$, which is useful information, for instance in light of the results in \cite{MR2040578}.

\subsection{Diameter bounds for image curves}\label{ss:diam}


A quasiconformal map can wildly distort the length of an individual curve.
However, Koskela has shown in  \cite[Lemma 2.6]{Koskela1994}
for quasiconformal mappings defined on a domain $\Omega$ in  $\mathbb{R}^n$, that  it is possible to control the \emph{diameter} of $f\circ \gamma$ in terms of $\int_{\gamma} a_f \; \mathrm{d}s$ for all curves $\gamma$ which are long enough in terms of their distance to the boundary of $\Omega$. The goal of this section is to study similar estimates in the Heisenberg group. Koskela's proof makes use of the Besicovitch covering theorem, which does not hold for the Kor\'anyi or the sub-Riemannian distance on $\mathbb{H}^1$. A possible approach would be to use one of the comparable distances with the Besicovitch covering property that were constructed in \cite{LeDonRigot}. Instead, we will give below a direct proof using the basic $3r$-covering theorem, which can be found for instance in
\cite[Theorem 2.1]{MR3154530}.
Our statement is slightly more flexible than the original version since we allow for a quantitative control of the lengths of curves in terms of a parameter $\alpha$; this will prove useful later in applications.

\begin{proposition}\label{t:diam}

 Let $d$ denote either the Kor\'anyi or sub-Riemannian distance on
$\mathbb{H}^1$. Let $f:\Omega \to \Omega'$ be
a $K$-quasiconformal mapping between domains in $\mathbb{H}^1$ with $\Omega \neq \mathbb{H}^1$.
Then, for every $\alpha\in(0,1]$ and for every rectifiable curve
$\gamma$ contained in $\Omega$ with
    \begin{equation}\label{eq:curve_length_cond}
    \mathrm{length}(\gamma)\geq \alpha d(\gamma,\partial \Omega),
    \end{equation}
    one has
    \begin{displaymath}
    \mathrm{diam}(f\circ \gamma) \leq C\int_{\gamma} a_f\;\mathrm{d}s
    \end{displaymath}
    for a constant $C$ which depends only on $\alpha$, $d$ and $K$. Here $\int \mathrm{d}s$ denotes integration with respect to the $d$-length.
\end{proposition}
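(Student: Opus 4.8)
The plan is to adapt Koskela's argument \cite[Lemma 2.6]{Koskela1994}, replacing the Besicovitch covering theorem (unavailable in $\mathbb{H}^1$) by the $3r$-covering theorem, and replacing pointwise derivative bounds by the ball-distortion estimate of Corollary~\ref{CorKoskAnalog} together with the Koebe theorem (Theorem~\ref{t:KoebeHeis}). Throughout I fix $d\in\{d_s,d_{\mathbb{H}^1}\}$ and write $\rho(x):=d(x,\partial\Omega)$. Since $\gamma$ is the continuous image of a compact interval, $\rho$ attains a positive minimum $\delta:=d(\gamma,\partial\Omega)>0$ on $\gamma$ and is bounded there, so $a_f$ is bounded on $\gamma$ and the right-hand side is finite. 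I first record two auxiliary facts, valid for a constant $c_0=c_0(K)\in(0,1)$ chosen small enough: for every $y\in\Omega$ and every $s\le c_0\rho(y)$,
\[
\mathrm{diam}\,f(B(y,s))\le C_1\,d(f(y),\partial\Omega')\le C_2\,a_f(y)\,\rho(y),
\]
and $a_f(x)\asymp a_f(y)$, up to a factor depending only on $K$, for all $x\in B(y,s)$. The first follows from Corollary~\ref{CorKoskAnalog} (applied with a suitable $\beta$, so that $3k\,B(y,s)\subset\Omega$) combined with Theorem~\ref{t:KoebeHeis}; the second from the same estimate, which forces $\rho(\cdot)$ and $d(f(\cdot),\partial\Omega')$ to vary by a bounded factor on $B(y,s)$, together with Koebe's theorem in the form $a_f\asymp d(f(\cdot),\partial\Omega')/\rho(\cdot)$. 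I also fix $\tau=\tau(K)\in(0,c_0]$ small enough that $f$ is quasisymmetric on every ball $B(z,\tau\rho(z))$, by the egg yolk principle (Proposition~\ref{p:egg_yolk} and Remark~\ref{r:egg_yolk_koranyi}).

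First I would treat \emph{Case A}, in which $\gamma\subset B(z,\tau\rho(z))$ for some $z\in\gamma$. Here the whole curve sits inside one quasisymmetry ball, so $\rho\asymp\rho(z)$ and $a_f\asymp a_f(z)$ along $\gamma$, and
\[
\mathrm{diam}(f\circ\gamma)\le\mathrm{diam}\,f(B(z,\tau\rho(z)))\le C_1\,d(f(z),\partial\Omega')\asymp a_f(z)\,\rho(z).
\]
On the other hand the length hypothesis \eqref{eq:curve_length_cond} gives $\mathrm{length}(\gamma)\ge\alpha\delta\asymp\alpha\rho(z)$, whence $\int_\gamma a_f\,\mathrm{d}s\asymp a_f(z)\,\mathrm{length}(\gamma)\gtrsim\alpha\,a_f(z)\rho(z)$. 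Combining the two displays yields $\mathrm{diam}(f\circ\gamma)\le C(\alpha,K)\int_\gamma a_f\,\mathrm{d}s$.

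In \emph{Case B}, where $\gamma\not\subset B(z,\tau\rho(z))$ for every $z\in\gamma$, I would run the covering argument. For each $y\in\gamma$ set $s(y):=c_0\rho(y)$ and consider the balls $\{B(y,s(y)):y\in\gamma\}$, whose radii are bounded since $\gamma$ is compact. The $3r$-covering theorem \cite[Theorem 2.1]{MR3154530} yields a countable pairwise disjoint subfamily $B_i=B(y_i,s(y_i))$ with $\gamma\subset\bigcup_i 3B_i$, the enlargement factor being absorbed into $c_0$ so that the distortion estimates still apply to $3B_i$. Since $f\circ\gamma$ is connected and covered by the open sets $f(3B_i)$, a standard chaining argument bounds $\mathrm{diam}(f\circ\gamma)$ by $\sum_i\mathrm{diam}\,f(3B_i)\le C\sum_i a_f(y_i)\rho(y_i)\asymp\sum_i a_f(y_i)\,s(y_i)$. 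The Case B hypothesis guarantees $\gamma\not\subset B_i$, so the sub-arc of $\gamma$ issuing from the center $y_i$ until it first reaches $\partial B_i$ has length at least $s(y_i)$ and lies in $B_i$; together with $a_f\asymp a_f(y_i)$ on $B_i$ this gives $a_f(y_i)\,s(y_i)\lesssim\int_{\gamma\cap B_i}a_f\,\mathrm{d}s$. Summing over the disjoint $B_i$ yields $\sum_i a_f(y_i)s(y_i)\lesssim\int_\gamma a_f\,\mathrm{d}s$, and hence $\mathrm{diam}(f\circ\gamma)\le C(K)\int_\gamma a_f\,\mathrm{d}s$. Taking the larger of the two constants completes the proof.

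The hard part, and the place where the argument departs from the Euclidean one, is the covering step: I must choose the radius function $s(\cdot)$ proportional to $\rho(\cdot)$ so that (i) every enlarged ball $3B_i$ still lies deep enough in $\Omega$ for Corollary~\ref{CorKoskAnalog} and for quasisymmetry to apply, (ii) the selected balls are disjoint, so that the arc-length lower bounds sum to at most $\mathrm{length}(\gamma)$, and (iii) the connected set $f\circ\gamma$ can be chained through the images $f(3B_i)$ to control its diameter rather than merely its measure. The case distinction is essential precisely because the arc-length bound $\mathrm{length}(\gamma\cap B_i)\ge s(y_i)$ fails when $\gamma$ is entirely swallowed by a single ball; this degenerate situation is exactly what the length condition \eqref{eq:curve_length_cond} controls, and it is handled separately in Case A at the cost of a factor $1/\alpha$ in the constant.
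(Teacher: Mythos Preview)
Your approach is essentially the same as the paper's: a $3r$-covering of $\gamma$ by balls of radius proportional to $d(\cdot,\partial\Omega)$, with the diameter of each image ball controlled via Corollary~\ref{CorKoskAnalog} plus Koebe, and the integral bounded below by arc-length pieces on the disjoint selected balls. The paper organizes the argument slightly differently: rather than splitting into Cases A/B, it makes the radius parameter $\lambda$ depend on $\alpha$ (requiring $\lambda\le 3\alpha/(1+\alpha)$), so that even when $\gamma$ is entirely contained in a covering ball the bound $\mathrm{length}(\gamma\cap B)\ge \tfrac{\lambda}{3}d(p,\partial\Omega)$ still follows from \eqref{eq:curve_length_cond}. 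Your case split is an equally valid way to isolate the $\alpha$-dependence.

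There is one genuine (though easily repaired) slip in your constants. You fix $\tau\in(0,c_0]$, take Case B to mean $\gamma\not\subset B(z,\tau\rho(z))$ for all $z\in\gamma$, and then cover by balls $B_i=B(y_i,c_0\rho(y_i))$. Since $\tau\le c_0$, the Case~B hypothesis only says $\gamma$ exits the \emph{smaller} $\tau$-ball at $y_i$; it does not force $\gamma\not\subset B_i$, which is what you need for the arc-length lower bound $\mathrm{length}(\gamma\cap B_i)\ge s(y_i)$. The fix is immediate: either run the covering with $s(y)=\tau\rho(y)$ (the auxiliary facts hold a fortiori at the smaller scale), or drop $\tau$ altogether and make the A/B split at radius $c_0\rho$ --- Case~A never actually uses quasisymmetry, only the diameter bound and the $a_f$-comparability, both of which you already have for $c_0$. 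With either change the proof goes through.
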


Recall that curves have the same length with respect to $d_s$ and $d_{\mathbb{H}^1}$.
Since $\frac{1}{\sqrt{\pi}}d_s \leq d_{\mathbb{H}^1}\leq d_s$ and $a_f^s \simeq a_f^{\mathbb{H}^1}$,
it suffices therefore  to prove Proposition \ref{t:diam} for, say,  $d=d_{\mathbb{H}^1}$.

\begin{proof}

Let $d=d_{\mathbb{H}^1}$.
    The following abbreviating notation will be used in this proof. For $0<\chi<1$, we denote
    $$
    B_{\chi}(x)={B(x,\chi d(x,\partial \Omega))}
    $$
    for $x \in \Omega$.

Let $k=k(2,K)$ be the constant given by
Proposition \ref{p:ball_dist} applied to $c=2$. Then we fix $\lambda\in (0,1)$ to be the largest number satisfying
\begin{equation}\label{eq:condLambdaDiam}
\lambda \leq \frac{3\alpha}{1+\alpha}\quad\text{and}\quad 10k
\lambda  \leq \frac{1}{2}.
\end{equation}
By the second condition we have that $10 k
B_{\lambda }(x) \subset \Omega$ for all $x \in \Omega$, which will be used to apply Proposition \ref{p:ball_dist}.
Furthermore, since $ k >1$, we have $\lambda \leq
\frac{1}{20k} \leq \frac{1}{20}$. The use of the first condition in \eqref{eq:condLambdaDiam} will become clear later.

Consider now an arbitrary curve $\gamma$ satisfying the assumptions of the proposition. For simplicity we continue to denote the trace of $\gamma$ by the symbol $\gamma$.
    Let $\mathcal{B}^\lambda$ be
   the cover of $\gamma$ given by
   all balls of the form $B_{\lambda/3}(p)$ where $p \in \gamma$.
By compactness of $\gamma$, we may without loss of generality assume that the family $\mathcal{B}^\lambda$ contains only finitely many balls.

 This allows us to apply the $3r$-covering lemma and select a (finite) disjointed subfamily  $ \mathcal{F}^\lambda \subset \mathcal{B}^\lambda$ so that the $3$-times enlarged balls in  $ \mathcal{F}^\lambda$ cover $\gamma$. More precisely,
 if we denote by $I$ the set of centers of the balls in $\mathcal{F}^\lambda$, then we have
 \begin{displaymath}
 B_{\lambda/3}(p)\cap B_{\lambda/3}(q) = \emptyset\,\,\,\,\text{ for  }\,p,q\in I,\,p\neq q\quad\text{and}\quad \gamma \subset \bigcup_{p\in I} B_{\lambda}(p).
 \end{displaymath}
Since $10k B_{\lambda}(p) \subset \Omega$ for all $p \in I$, we can apply Proposition \ref{p:ball_dist} (with $c=2$), and we have that
    \begin{align}
    \mathrm{diam}(f\circ \gamma) \leq \sum_{p \in I} \mathrm{diam} (f( B_{\lambda}(p)))  \leq 2 \sum_{p \in I} d(f(p), \partial \Omega'). \label{firsteq}
    \end{align}

    Next we establish an estimate of $\int_{\gamma} a_f ds$ from below by a multiple of $\sum_{p \in I} d(f(p), \partial \Omega')$. In what follows we employ the family of balls
    \begin{align}
    \{ B_{\lambda/3}(p) : p \in I \}. \label{korballsS}
    \end{align}
    Note that the family~\eqref{korballsS} does not necessarily cover $\gamma$, however since we are looking for a lower estimate of $\int_{\gamma} a_f \;\mathrm{d}s$, and $a_f \geq 0$, this will not be a problem. Let $ l(p)=\mathrm{length} ( \gamma \cap  B_{\lambda/3}(p) )$, then we claim that
    \begin{align}
    l(p) \geq  \frac{\lambda}{3} d(p, \partial \Omega). \label{blahblah3}
    \end{align}
 The bound
 \eqref{blahblah3} is obvious if $\gamma$ exits $
B_{\frac{\lambda}{3}}(p)$, however the assumption
\eqref{eq:curve_length_cond} implies  that \eqref{blahblah3} is
valid even if the entire curve is contained in $ B_{\frac{\lambda}{3}}(p)$.  Indeed, if $\gamma \subset  B_{\frac{\lambda}{3}}(p)$, then
    \begin{equation}\label{blahblah1}
    \alpha d(\gamma,\partial \Omega) \geq \alpha\left(d(p,\partial \Omega)- \frac{\lambda}{3} d(p,\partial \Omega)\right)\geq \frac{\lambda}{3} d(p,\partial \Omega)
    \end{equation} where the second inequality is a consequence of the choice of $\lambda$ as in \eqref{eq:condLambdaDiam}.  The fact that $l(p)=\mathrm{length}(\gamma) \geq \alpha d(\gamma,\partial \Omega)$, together with \eqref{blahblah1} proves \eqref{blahblah3} in this case.

    Our desired estimate will result by approximating $$\int_{\gamma \cap B_{\lambda/3}(p)} a_f ds$$  by $a_f(p)l(p)$, thus we require a constant $\tau$, depending on $K$ and the chosen metric only, such that  $a_f(x) \geq \tau d(f(p),\partial \Omega')/d(p,\partial \Omega)$ for all $ x \in B_{\lambda /3}(p)$. To this end we observe that Proposition \ref{p:ball_dist}, applied to  $B:=B_{\lambda/3}(p)$ and $c=2$, implies
    \begin{align*}
    d(f(x), \partial \Omega') & \geq d (  f( B_{\lambda/3}(p)  ), \partial \Omega' ) \geq  \frac{1}{2}   d(f(p), \partial \Omega')
    \end{align*} for all $ x \in B_{\lambda /3}(p)$.
Moreover,
    \begin{align*}
    d(x, \partial \Omega) \leq d(x,p) + d(p, \partial \Omega)\leq  \left(\frac{\lambda}{3} +1\right) d(p, \partial \Omega) \leq  2 d(p, \partial \Omega)
    \end{align*}
    for all $ x \in B_{\lambda/3}(p)$.
    Using the two inequalities above, together with Theorem~\ref{t:KoebeHeis}, we have
    \begin{align}\label{eq:a_f_compare}
    a_f(x) & \geq \frac{1}{c_K}\frac{ d(f(x), \partial \Omega')}{d(x, \partial \Omega)}  \geq \frac{1}{4c_K} \, \frac{ d(f(p), \partial \Omega')}{d(p, \partial \Omega)}
    \end{align}
    for all $ x \in B_{\lambda /3}(p)$. Thus $\tau=\frac{1}{4c_K} $ is sufficient for our needs.

    To finish the proof we observe that
    \begin{align*}
    \int_{\gamma} a_f ds  \geq \sum_{p \in I} \int_{\gamma \cap B_{\lambda /3}(p)} a_f ds
    \geq {\tau} \sum_{p \in I}    \frac{ d(f(p), \partial \Omega')}{d(p, \partial \Omega)}  l(p)
    & \overset{\eqref{blahblah3}}{\geq} {\tau} \frac{\lambda}{3} \sum_{p \in I}  d(f(p), \partial \Omega').
    \end{align*}
    This estimate combined with \eqref{firsteq} proves the claim with
    $ C = 6/{\lambda \tau}= {24 c_K}/{\lambda}.$
\end{proof}

\begin{remark}\label{r:example}
Analogously as in Euclidean spaces, Proposition \ref{t:diam} does not hold without the assumption  \eqref{eq:curve_length_cond} on the length of $\gamma$. To see this,
  we consider the Heisenberg radial stretch map $f=f_k:\mathbb{H}^1 \to \mathbb{H}^1$, $0<k<1$, discussed in \cite{zbMATH06195912}. This is a quasiconformal mapping  which on the $(x,y)$-plane agrees with the usual planar radial stretch map, that is $f(z,0)=(z|z|^{k-1},0)$, and which sends points with Kor\'{a}nyi norm equal to $r\geq 0$ onto points of Kor\'{a}nyi norm $r^k$.
In light of Proposition \ref{t:diam}, let us now consider the map $f$ for $k=1/2$, restricted to the Kor\'{a}nyi unit ball, $\Omega= B(0,1)$ and let $\gamma$ be a line segment with $\mathrm{length}(\gamma)=r\in (0,1)$ on the $x$-axis emanating from $0$ (note that restricted to the $x$-axis, the Kor\'{a}nyi distance agrees with the Euclidean distance). Then $f(\gamma)$ is again a line segment on the $x$-axis starting from $0$, but with $\mathrm{diam}(f \circ \gamma)= \sqrt{r}$. For a fixed $\alpha$, we can choose $r>0$ small enough so that $\gamma$ violates the assumption \eqref{eq:curve_length_cond}, and by letting $r$ tend to $0$, we will see that indeed the conclusion of Proposition \ref{t:diam} does not hold in this case since $\sqrt{r}\gg r$ for small $r$, yet $\int_{\gamma} a_f\;\mathrm{d}s \leq cr$ for a positive and finite constant $c$ which does not depend on $r$. To establish the last claim it suffices to
observe that $a_f(0)<\infty$ and that there exists $r_0>0$ such that $a_f(x)\leq c' a_f(0)$ for all $x\in B(0,r_0)$ for a constant $c'$ depending on $K$, and in particular on $c_K$ from Theorem~\ref{t:KoebeHeis}. This can be seen by an argument as in \eqref{eq:a_f_compare}. Therefore,
$$
\int_{\gamma} a_f\;\mathrm{d}s \leq c' a_f(0) r \ll \sqrt{r}= \mathrm{diam}(f \circ \gamma),
$$
for $r<r_0$ small enough, which is impossible.
\end{remark}

\subsection{Comparison of the average derivative and the operator norm}\label{subs:comp}

As an application of the Koebe theorem for quasiconformal mappings in $\mathbb{R}^n$, Astala and Koskela have shown that for a $K$-quasiconformal map $f:\Omega  \to \Omega'$, for $\Omega,\Omega'\subseteq \mathbb{R}^n$, the integrals 
\begin{displaymath}
\int_{\Omega} \|Df(x)\|^p\;\mathrm{d}\mathcal{L}^n(x)\quad\text{and}\quad \int_{\Omega} a_f(x)^p\;\mathrm{d}\mathcal{L}^n(x).
\end{displaymath}
are quantitatively comparable for $p$ in an appropriate  parameter range, see Theorem 3.4 in \cite{MR1191747}.
The main goal of this section is to prove a counterpart of the aforementioned theorem, which is valid for both $d=d_s$ and $d=d_{\mathbb{H}^1}$.

\begin{thm}\label{t:comparable}
Let $f:\Omega \to \Omega'$ be a $K$-quasiconformal mapping between domains in $(\mathbb{H}^1,d)$ for some $K\geq 1$.
Moreover, denote by $p=p(K)>4$ a higher integrability exponent of the Jacobian of $f$ as in Theorem~\ref{t:GehringHeisenberg}. Then
\begin{displaymath}
 \frac{1}{c} \int_{\Omega} a_f(x)^q\;\mathrm{d}m \leq \int_{\Omega} \|D_H f(x)\|^q\;\mathrm{d}m \leq c \int_{\Omega} a_f(x)^q\;\mathrm{d}m
\end{displaymath}
for all $4-p < q < p$, where $c$ depends on $K$ and $q$.
\end{thm}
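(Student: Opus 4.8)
The plan is to pass from the operator norm to the Jacobian pointwise, and then to compare the pointwise weight $J_f^{q/4}$ with the averaged quantity $a_f^q$ by means of a Whitney decomposition and a reverse-Jensen estimate. First I would record the pointwise comparison between $\|D_Hf\|$ and $J_f^{1/4}$. Writing $\sigma_1\ge\sigma_2\ge0$ for the singular values of $D_Hf(x)$, one has $\|D_Hf\|=\sigma_1$ and $J_f=(\det D_Hf)^2=\sigma_1^2\sigma_2^2$, so that $J_f^{1/4}=(\sigma_1\sigma_2)^{1/2}\le\sigma_1=\|D_Hf\|$; combined with the distortion inequality \eqref{eq:analyt_dist_ineq} this yields
\[
J_f(x)^{1/4}\le\|D_Hf(x)\|\le K^{1/4}J_f(x)^{1/4}\qquad\text{for a.e. }x\in\Om.
\]
Raising to the power $q$ and integrating, the theorem reduces to proving $\int_\Om J_f^{q/4}\,\mathrm{d}m\simeq\int_\Om a_f^q\,\mathrm{d}m$, with implicit constants depending only on $K$ and $q$.

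Next I would localize. Using the Whitney decomposition of Lemma~\ref{l:whitney}, I cover $\Om$ by balls $B_i=B(x_i,r_i)$ of bounded overlap with $r_i\simeq d(x_i,\partial\Om)$ and $3kB_i\subset\Om$, where $k$ is the constant of Proposition~\ref{p:A_p_Jacobian}; thus the $A_{p/(p-4)}$-condition \eqref{eq:weightJacobianHeis} and the reverse Hölder inequality \eqref{eq:reverseHoelder} hold on every $B_i$ with constants depending only on $K$. By the Harnack estimate for $a_f$ (Lemma~\ref{lem:afHI}) one has $\int_{B_i}a_f^q\,\mathrm{d}m\simeq a_f(x_i)^q|B_i|$ for either sign of $q$, and by Lemma~\ref{l:a_f_comparison} the average over $B(x_i,d(x_i,\partial\Om))$ defining $a_f(x_i)$ may be replaced, up to a factor depending on $K$, by the average over $B_i$, giving $a_f(x_i)^q\simeq\exp\bigl(\tfrac q4(\log J_f)_{B_i}\bigr)$. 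Summing over $i$ and invoking bounded overlap (on the $J_f^{q/4}$ side for one inequality, on the $a_f^q$ side for the other), the whole statement reduces to the local comparability
\[
\exp\Bigl(\tfrac q4(\log J_f)_{B_i}\Bigr)\le\Barint_{B_i}J_f^{q/4}\,\mathrm{d}m\le C\exp\Bigl(\tfrac q4(\log J_f)_{B_i}\Bigr),
\]
valid on each $B_i$ with $C=C(K,q)$.

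The left inequality holds for every $q$ by Jensen's inequality for the convex function $t\mapsto e^{(q/4)t}$ and needs no integrability; it produces the bound $\int_\Om a_f^q\lesssim\int_\Om J_f^{q/4}$. The content is the right inequality, a reverse-Jensen ($A_\infty$) estimate, which I would prove by splitting in $s:=q/4\in(1-\tfrac p4,\tfrac p4)$. Both endpoints of the argument are the $s=1$ reverse-Jensen inequalities
\[
\Barint_{B_i}J_f\,\mathrm{d}m\le C\exp\bigl((\log J_f)_{B_i}\bigr),\qquad
\Barint_{B_i}J_f^{-\frac{p-4}4}\,\mathrm{d}m\le C\exp\Bigl(-\tfrac{p-4}4(\log J_f)_{B_i}\Bigr),
\]
both of which follow directly from Proposition~\ref{p:A_p_Jacobian} together with Jensen's inequality (the first exactly as in \eqref{eq:est_meas_ratio}, the second by rearranging \eqref{eq:weightJacobianHeis} and using $\exp((\log J_f)_{B_i})\le\Barint_{B_i}J_f$). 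For $1\le s<\tfrac p4$ the reverse Hölder inequality \eqref{eq:reverseHoelder} reduces $\Barint_{B_i}J_f^{s}$ to $\bigl(\Barint_{B_i}J_f\bigr)^{s}$ and hence to the first $s=1$ bound; for $0<s\le1$ concavity of $t\mapsto t^{s}$ does the same. For $1-\tfrac p4<s<0$ I would write $J_f^{s}=v^{\sigma}$ with $v:=J_f^{-(p-4)/4}$ and $\sigma=-4s/(p-4)\in(0,1)$, and use concavity of $r\mapsto r^{\sigma}$ together with the second $s=1$ bound for $v$.

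The genuinely delicate point is not any single inequality above but the requirement that the reverse-Jensen constant be uniform over all Whitney balls $B_i$ and remain finite on the full open range $4-p<q<p$; it blows up precisely as $q\to p^-$ (through the reverse Hölder constant in \eqref{eq:reverseHoelder}) and as $q\to(4-p)^+$, which is consistent with $c$ being allowed to depend on $q$. The main structural obstacle is to execute the passage from the local estimates to the global integrals while controlling all constants by $K$ and $q$: this is where the Harnack property of $a_f$ (Lemma~\ref{lem:afHI}) and the admissibility $3kB_i\subset\Om$ built into the Whitney decomposition (Lemma~\ref{l:whitney}) carry the argument, compensating for the absence of the Besicovitch covering theorem in $\Hei$ and replacing the Euclidean tools used in \cite{MR1191747}.
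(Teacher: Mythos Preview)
Your argument is correct and follows the same overall architecture as the paper's proof: choose a Whitney decomposition with $\lambda$ small enough that each ball $B_i$ is admissible for the reverse H\"older inequality (Theorem~\ref{t:GehringHeisenberg}), the weight condition (Proposition~\ref{p:A_p_Jacobian}), and the Harnack estimate (Lemma~\ref{lem:afHI}); establish local two-sided comparability on each $B_i$; then sum using bounded overlap. The reduction $J_f^{1/4}\le\|D_Hf\|\le K^{1/4}J_f^{1/4}$ and the local reverse-Jensen case analysis in $s=q/4$ are all fine.

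There is, however, one genuine difference from the paper worth recording. For the upper bound $\Barint_{B_i}\|D_Hf\|^4\lesssim a_f(x_i)^4$, the paper argues \emph{geometrically}: it uses $\Barint_{B_i}\|D_Hf\|^4\le K\,|f(B_i)|/|B_i|$, controls $\mathrm{diam}\,f(B_i)$ by $d(f(x_i),\partial\Omega')$ via Corollary~\ref{CorKoskAnalog}, controls $\mathrm{diam}\,B_i$ from below by $d(x_i,\partial\Omega)$ via the Whitney condition, and then invokes the Koebe theorem (Theorem~\ref{t:KoebeHeis}) to bound the distance ratio by $a_f(x_i)$. You instead obtain the same bound \emph{analytically}, directly from the local $A_\infty$ property $\Barint_{B_i}J_f\le C\exp\bigl((\log J_f)_{B_i}\bigr)$ derived from Proposition~\ref{p:A_p_Jacobian}, together with Lemma~\ref{l:a_f_comparison}. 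This is a legitimate shortcut: it avoids the explicit call to Koebe and to Corollary~\ref{CorKoskAnalog} at this step (Koebe is still used, but only through the Harnack Lemma~\ref{lem:afHI}). The trade-off is that the paper's route makes the geometric content of the inequality transparent, whereas yours stays entirely inside weight theory. One small correction: with $p<4+k$ fixed, your reverse-Jensen constant does \emph{not} in fact blow up as $q\to p^-$, since for $1\le s<p/4$ you can always interpolate through the fixed exponent $p/4$ and use the single reverse-H\"older constant at that exponent; the dependence on $q$ enters only through the exponents in the concavity/H\"older steps, and the statement's allowance ``$c$ depends on $K$ and $q$'' is more generous than your argument actually needs at that endpoint.
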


Theorem \ref{t:comparable} provides explicit bounds for the admissible exponents $q$ by using the exponent $p$ from Theorem~\ref{t:GehringHeisenberg} and Proposition~\ref{p:A_p_Jacobian}.
The $\mathbb{R}^n$-counterpart of this statement is \cite[Theorem 3.4]{MR1191747}, which gives the bounds
$$
-\min\{p_0-n, 1/(p_1-1)\}\leq q \leq n + \min\{p_0-n, 1/(p_1-1)\}.
$$
In this formula, $p_0$ denotes the exponent in Gehring's reverse H\"older's inequality for $\|Df\|$, which corresponds to our parameter $p$, and the parameter $p_1$ is obtained from an estimate which is quantitatively equivalent to the Muckenhoupt weight condition for $\|Df\|^n$ on a cube. Instead of an estimate in the spirit of this latter result, we apply Proposition~\ref{p:A_p_Jacobian}, which provides us with the bounds in terms of $p$.

Theorem \ref{t:comparable} is a consequence of our Koebe theorem (Theorem~\ref{t:KoebeHeis}), auxiliary results from Section \ref{s:JacobianQC} as well as the following observation, which is of independent interest.
Namely, it shows that $a_f$ as function of a point in the domain, satisfies a Harnack-type inequality. A similar estimate has already appeared in the proof of Proposition \ref{t:diam}, but in the following we need a more general statement which we formulate as follows. Let us also remark that this result can be deduced from our Koebe theorem, Proposition 3.7.3 in~\cite{LRj} and an appropriate localization argument. Instead, we give a short direct proof using just the Koebe theorem and Proposition \ref{p:ball_dist}.

\begin{lemma}\label{lem:afHI} The following holds with respect to $d\in \{d_s,d_{\Hei}\}$.
 Let $f:\Om \to \Om'$ be a $K$-quasiconformal map between domains $\Omega,\Omega' \subsetneq (\Hei,d)$.
 Suppose a ball $B\subset \Om$ satisfies the condition
 \begin{equation}\label{lem:whitney}
  \diam B\leq \lambda \dist(B, \partial \Om),
 \end{equation}
 where $0<\lambda \leq 2/(10k)$ for the constant $k$ from Proposition~\ref{p:ball_dist} applied to $c>1$ and the metric $d$. Then it holds
 \[
  a_f(x)\leq C a_f(y)
 \]
 for all $x,y\in B$ and a constant $C>1$ which depends only $c$ and $K$.
 \end{lemma}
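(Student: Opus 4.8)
The plan is to reduce the statement, via the Koebe theorem (Theorem~\ref{t:KoebeHeis}), to a comparison of the quantity $d_{\mathbb{H}^1}(f(\cdot),\partial\Om')/d_{\mathbb{H}^1}(\cdot,\partial\Om)$ at the two points. Writing $d=d_{\mathbb{H}^1}$ and applying the two-sided estimate \eqref{eq:KoebeEst} at $x$ (from above) and at $y$ (from below), I would obtain
\[
\frac{a_f(x)}{a_f(y)} \leq c_K^2\,\frac{d(f(x),\partial\Om')}{d(f(y),\partial\Om')}\cdot\frac{d(y,\partial\Om)}{d(x,\partial\Om)},
\]
so it suffices to bound the two ratios on the right by constants depending only on $\beta$, $K$ and the data of $\Hei$. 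Before doing so I would record the consequence of hypothesis \eqref{lem:whitney} that makes Corollary~\ref{CorKoskAnalog} applicable: writing $B=B(z,s)$ and using $\diam B=2s$ for the Kor\'anyi metric, the assumption $\diam B\leq\lambda\dist(B,\partial\Om)$ with $\lambda\leq 2/(3k+1)$ gives $d(z,\partial\Om)\geq\dist(B,\partial\Om)\geq 2s/\lambda\geq(3k+1)s>3ks$, so that $3kB\subset\Om$ and the corollary may be invoked.

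For the domain-side ratio the triangle inequality gives, for all $x,y\in B$, that $d(y,\partial\Om)\leq d(x,\partial\Om)+\diam B$ and $d(x,\partial\Om)\geq\dist(B,\partial\Om)\geq\diam B/\lambda$, whence $d(y,\partial\Om)/d(x,\partial\Om)\leq 1+\lambda$. The image-side ratio is the heart of the argument. Since $f(x),f(y)\in f(B)$, the distance of any point of $f(B)$ to $\partial\Om'$ is trapped between $\dist(f(B),\partial\Om')$ and $\dist(f(B),\partial\Om')+\diam f(B)$, so
\[
\frac{d(f(x),\partial\Om')}{d(f(y),\partial\Om')}\leq 1+\frac{\diam f(B)}{\dist(f(B),\partial\Om')}.
\]
Here I would apply both conclusions of Corollary~\ref{CorKoskAnalog} (to $g=f$, $U=\Om$, $U'=\Om'$, and the ball $B$ with its center $z$): the diameter bound $\diam f(B)\leq\tfrac{2}{\beta-1}d(f(z),\partial\Om')$ combined with $d(f(z),\partial\Om')\leq\tfrac{\beta}{\beta-1}\dist(f(B),\partial\Om')$ yields $\diam f(B)\leq\tfrac{2\beta}{(\beta-1)^2}\dist(f(B),\partial\Om')$, so the image-side ratio is at most $1+\tfrac{2\beta}{(\beta-1)^2}$.

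Combining the three displays gives $a_f(x)\leq C\,a_f(y)$ with $C=c_K^2(1+\lambda)\bigl(1+\tfrac{2\beta}{(\beta-1)^2}\bigr)>1$, which depends only on $\beta$, $K$ and the data; since the bound is symmetric in $x$ and $y$, the reverse inequality holds with the same constant. The only genuinely nontrivial step is the control of the image-side ratio, and the main point to get right there is the containment $3kB\subset\Om$ needed to apply Corollary~\ref{CorKoskAnalog}: this is exactly why the hypothesis is phrased with the threshold $\lambda\leq 2/(3k+1)$, and verifying this implication is the first thing I would pin down.
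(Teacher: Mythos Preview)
Your proof is correct and follows essentially the same approach as the paper: reduce via the Koebe theorem to the two ratios $d(f(x),\partial\Om')/d(f(y),\partial\Om')$ and $d(y,\partial\Om)/d(x,\partial\Om)$, bound the domain-side ratio by $1+\lambda$ using \eqref{lem:whitney}, and bound the image-side ratio by combining both conclusions of Corollary~\ref{CorKoskAnalog} after checking that $3kB\subset\Om$. The only cosmetic difference is that the paper routes the image-side estimate through the center $z$ via $d(f(x),\partial\Om')\leq d(f(x),f(z))+d(f(z),\partial\Om')$, whereas you use the equivalent $d(f(x),\partial\Om')\leq \dist(f(B),\partial\Om')+\diam f(B)$ directly; the resulting constants differ slightly but the argument is the same.
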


\begin{proof}
 Let $B\subset \Om$ be as above and let us fix $x,y\in B$. Then, the Koebe theorem, see Theorem~\ref{t:KoebeHeis}, implies
 \begin{align}\label{ineq:afHI1}
  a_f(x)\leq c_K\,\frac{d(f(x), \partial \Om')}{d(x,\partial \Om)}&= c_K\,\frac{d(f(y), \partial \Om')}{d(y,\partial \Om)}\,\frac{d(f(x), \partial \Om')}{d(f(y),\partial \Om')}\,\frac{d(y, \partial \Om)}{d(x, \partial \Om)}\\&\leq c_K^2 a_f(y) \,\frac{d(f(x), \partial \Om')}{d(f(y),\partial \Om')}\,\frac{d(y, \partial \Om)}{d(x, \partial \Om)},\notag
 \end{align}
so the task is reduced to bounding the quotient in the last expression.

Letting $z$  be the center of the ball $B$, we have that
  \begin{align*}
 d(f(x), \partial \Om')&\leq  d(f(x), f(z))+d(f(z), \partial \Om')\\
  & \leq \diam f(B)+d(f(z), \partial \Om') \\
  & \leq \left(\tfrac{2}{c-1}+1\right)d(f(z), \partial \Om')\leq C' \dist(f(B), \partial \Om')
 \end{align*}
 with $C'=\left(\tfrac{2}{c-1}+1\right) \frac{c}{c-1}$. This follows from the last assertion of Proposition~\ref{p:ball_dist} applied with $c$. To justify the application of this proposition, we have to verify that $10kB \subset \Omega$ for the constant $k$ associated to
$c$. This is indeed the case since the choice of $\lambda$ ensures that
\begin{displaymath}
10 k \mathrm{rad}(B) = (10k)\frac{\mathrm{diam}B}{2} \leq \mathrm{dist}(B,\partial \Omega) \leq d(z,\partial \Omega).
\end{displaymath}
Therefore, the relevant term in \eqref{ineq:afHI1} can be bounded as follows:
 \begin{align*}
  \frac{d(f(x), \partial \Om')}{d(f(y),\partial \Om')}\,\frac{d(y, \partial \Om)}{d(x, \partial \Om)} &\leq
  2C'\frac{\dist(f(B), \partial \Om')}{d(f(y),\partial \Om')}\,\frac{d(y, \partial \Om)}{d(x, \partial \Om)}  \nonumber \\
  & \leq 2C'\,\frac{d(y, \partial \Om)}{d(x, \partial \Om)}.
 \end{align*}
To continue, we observe that \eqref{lem:whitney} yields
 \begin{displaymath}
 \frac{d(y,\partial \Omega)}{d(x,\partial \Omega)}\leq \frac{\mathrm{diam} B + \mathrm{dist}(B,\partial \Omega)}{\mathrm{dist}(B,\partial \Omega)}\leq \lambda +1,
 \end{displaymath}
 where the upper bound depends on $c$ and $K$ via the choice of $\lambda$. Thus we can find a constant $1<C<\infty$ such that \eqref{ineq:afHI1} reduces to $a_f(x)\leq C a_f(y)$, as desired.
\end{proof}



Towards the proof of Theorem \ref{t:comparable}, we have to carefully choose a Whitney decomposition of our domain in order to ensure that the relevant balls are small enough so that all the necessary auxiliary results  from Section \ref{s:JacobianQC} and Lemma \ref{lem:afHI} are applicable. The results existing in the literature, cf.\ Proposition 4.1.15 in~\cite{MR3363168}, are not quite sufficient for our purpose since they state the existence of Whitney balls with a certain specific ratio between radii and distance to the complement of the domain. Adapting the proof in \cite{MR3363168}, we show the following result (both for the sub-Riemannian and the Kor\'{a}nyi distance) which leaves the flexibility to choose the parameter $\lambda$.


\begin{lemma}[Whitney decomposition]\label{l:whitney}
Let $\Omega\subsetneq \Hei$ be an open subset. For any $\lambda\in (0,1/2)$, there exists a countable collection
 $\mathcal{C}=\left\{B(x_i, r_i)\right\}$ of balls in $\Om$ such that
 \begin{align*}
  &(1)\quad \Om=\bigcup_{i} B(x_i,r_i)\\
  &(2)\quad \sum_{i} \chi_{B(x_i,2r_i)}\leq C,
 \end{align*}
 where the constant $C$ depends only on the choice of the metric $d\in \{d_s,d_{\Hei}\}$; and such that it holds
  \[
(3)\qquad  \frac{\lambda}{4}\dist(B, \partial \Om) \leq \diam B\leq \lambda \dist(B, \partial \Om),
 \]
 for any ball $B=B(x_i, r_i)$ in $\mathcal{C}$.
\end{lemma}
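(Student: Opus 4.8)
The plan is a Whitney-type construction calibrated so that condition (3) holds automatically by homogeneity, followed by a maximal disjoint selection to produce the covering (1), and a volume/packing estimate for the overlap bound (2). Throughout write $\delta(x):=\dist(x,\partial\Om)$.

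First I would exploit homogeneity. Since $d$ is left-invariant and scales by $s$ under the dilations $\delta_s$, isometry-invariance of the diameter and $B(0,r)=\delta_r B(0,1)$ give $\diam B(x,r)=D_0\,r$ for every ball, where $D_0:=\diam B(0,1)$ is a positive finite constant depending only on the distance; one checks $D_0\geq 1$ for both $d_s$ and $d_{\mathbb{H}^1}$. The only other metric input needed is the pair of elementary bounds $\delta(x)-r\leq \dist(B(x,r),\partial\Om)\leq \delta(x)$ (valid for $r<\delta(x)$: the right one by testing $x$, the left by the triangle inequality) together with the $1$-Lipschitz property $|\delta(x)-\delta(y)|\leq d(x,y)$. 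I then set $\sigma:=\lambda/(4D_0)$, so that $\sigma<1/8$ because $\lambda<1/2$ and $D_0\geq 1$, and declare the Whitney balls to have radius $r_i:=\sigma\,\delta(x_i)$. For such a ball $\diam B(x_i,r_i)=D_0\sigma\,\delta(x_i)=\tfrac{\lambda}{4}\delta(x_i)$, while $\tfrac{7}{8}\delta(x_i)\leq(1-\sigma)\delta(x_i)\leq\dist(B(x_i,r_i),\partial\Om)\leq\delta(x_i)$; from these two facts one reads off $\tfrac{\lambda}{4}\dist(B,\partial\Om)\leq\diam B\leq\lambda\,\dist(B,\partial\Om)$, which is exactly (3).

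To obtain the covering I would select centers by a Vitali-type argument at all scales simultaneously, avoiding any bounded-radius hypothesis. Using Zorn's lemma, choose $\mathcal N\subset\Om$ maximal with respect to the property that the balls $B(x,\tfrac{\sigma}{5}\delta(x))$, $x\in\mathcal N$, are pairwise disjoint, and put $\mathcal C=\{B(x,\sigma\delta(x)):x\in\mathcal N\}$. Each $B(x_i,r_i)\subset\Om$ since $r_i<\delta(x_i)$, and $\mathcal N$ is countable because the disjoint balls $B(x,\tfrac{\sigma}{5}\delta(x))$ have positive measure and $\Hei$ is $\sigma$-finite. For (1): if $y\in\Om\setminus\mathcal N$, maximality prevents adding $y$, so $B(y,\tfrac{\sigma}{5}\delta(y))$ meets some $B(x,\tfrac{\sigma}{5}\delta(x))$; then $d(x,y)<\tfrac{\sigma}{5}(\delta(x)+\delta(y))$, and the Lipschitz bound forces $\delta(y)\leq\tfrac{1+\sigma/5}{1-\sigma/5}\delta(x)$, whence $d(x,y)<\sigma\delta(x)=r_x$, i.e. $y\in B(x,r_x)$.

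The remaining and most delicate point is the bounded overlap (2) with a constant independent of $\lambda$. Fix $z\in\Om$ and let $J=\{i:z\in B(x_i,2r_i)\}$. For $i\in J$ the Lipschitz bound yields $\tfrac{\delta(z)}{1+2\sigma}<\delta(x_i)<\tfrac{\delta(z)}{1-2\sigma}$, so every $r_i$ is comparable to $\sigma\delta(z)$ up to the factor $\tfrac{1+2\sigma}{1-2\sigma}\leq\tfrac{5}{3}$, which is universal since $\sigma\leq 1/8$. The disjoint selection balls $B(x_i,\tfrac{\sigma}{5}\delta(x_i))$, $i\in J$, all lie in $B(z,6\sigma\delta(z))$, and by Ahlfors $4$-regularity each has measure $\gtrsim(\sigma\delta(z))^4$ while $m(B(z,6\sigma\delta(z)))\lesssim(\sigma\delta(z))^4$; disjointness then bounds $\card J$ by the ratio of these quantities, in which the common factor $(\sigma\delta(z))^4$ cancels. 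Hence $\card J\leq C$ with $C$ depending only on the Ahlfors-regularity constants and $D_0$, which is (2). I expect this cancellation — that the overlap constant ends up depending only on the data of $\Hei$ and not on $\lambda$ — to be the main thing to get right; it hinges on running the packing estimate with the shrunken selection balls $B(x_i,\tfrac{\sigma}{5}\delta(x_i))$ (genuinely disjoint, radius $\asymp\sigma\delta(z)$) rather than with the Whitney balls $B(x_i,r_i)$ themselves.
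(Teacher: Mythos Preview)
Your proof is correct and complete; the verification of (3), the covering argument for (1) via maximality, and the packing estimate for (2) all go through as you describe, with the crucial cancellation of $(\sigma\delta(z))^4$ giving $\lambda$-independence of the overlap constant.

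Your route differs from the paper's. The paper follows the stratified construction of \cite[Proposition~4.1.15]{MR3363168}: it slices $\Omega$ into dyadic layers $\{x:2^{k-1}\leq\delta(x)\leq 2^k\}$, applies the $5r$-covering lemma within each layer, and then proves bounded overlap by showing that the centers of the balls $2B_i$ containing a given point can lie in at most two consecutive layers, bounding the count per layer by doubling. Your argument instead selects a single maximal disjoint family over all of $\Omega$ at once (via Zorn), and handles the overlap directly by a packing estimate with Ahlfors regularity on the shrunken selection balls. Your approach is shorter and avoids the layer bookkeeping; the paper's approach stays closer to the standard Vitali lemma (which in its usual form needs a bounded-radius hypothesis that the layering provides). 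Both arguments achieve the $\lambda$-independent overlap bound by the same mechanism: the ratio between the radius of the containing ball and the radii of the disjoint packed balls is controlled by $\tfrac{1+c\sigma}{1-c\sigma}$-type factors that stay bounded uniformly in $\sigma\in(0,1/8)$.
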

\begin{proof}
We fix a metric $d\in \{d_s,d_{\mathbb{H}^1}\}$.
Our goal is to find a collection of balls satisfying (1), (2), and
\[
(3')\qquad c_1(\lambda) d(x_i,\partial \Om)\leq  r_i\leq
c_2(\lambda) d(x_i,\partial \Om).
 \]
for
$
c_1(\lambda):= \frac{\lambda}{8}$ and $c_2(\lambda):= \frac{\lambda}{\lambda +2}$.
These constants have been chosen so that (3') implies (3). Indeed, assuming (3'), we find
\begin{align*}
\frac{\lambda}{4}\mathrm{dist}(B,\partial \Omega)\leq \frac{\lambda}{4} d(x_i,\partial \Omega) \leq 2 r_i = \mathrm{diam} B
\end{align*}
and
\begin{align*}
\mathrm{diam} B = 2 r_i \leq \frac{2\lambda}{\lambda +2} d(x_i,\partial \Omega)&\leq \frac{2\lambda}{\lambda +2} (\mathrm{dist}(B,\partial \Omega)+r_i)= \frac{2\lambda}{\lambda +2} (\mathrm{dist}(B,\partial \Omega)+ \tfrac{1}{2}\mathrm{diam}B),
\end{align*}
which implies the right-hand side of (3).
Thus it suffices to verify (1), (2) and (3'). We adapt the proof of Proposition 4.1.15 in \cite{MR3363168} and for any $k\in \mathbb{Z}$ define
 \[
 \mathcal{F}_k:=\left\{ B\left(x, \frac15 \frac{c_1(\lambda)+c_2(\lambda)}{2}d(x,\partial \Om)\right): x\in \Om\,\hbox{ and }\, 2^{k-1}\leq d(x,\partial \Om) \leq 2^k \right\}.
 \]
 We
apply the $5r$-covering lemma to find a countable pairwise
disjoint family of balls $\mathcal{G}_k\subset \mathcal{F}_k$ so that a family of balls
 \[
  \mathcal{C}:=\bigcup_{k\in \mathbb{Z}}\{5B:B\in \mathcal{G}_k\}
 \]
 satisfies assertion (1) of the lemma. By the definition of the radii $r_i$ as the arithmetic averages of $c_1(\lambda)d(x_i,\partial \Omega)$ and $c_2(\lambda)d(x_i,\partial \Omega)$, we get assertion (3'). In order to show (2) we proceed as in \cite{MR3363168}, exploiting the doubling property of the metric $d$. Suppose that there is $x\in \Om$ belonging to $M$ balls of the form $2B$, for $B\in \mathcal{C}$.
 We relabel the centers of these balls as $x_1,\ldots,x_M$ in such a way that $d(x_i,\partial \Omega)\leq d(x_1,\partial \Omega)$ for $i=1,\ldots,M$.
 The radii of the balls $2B_i$ are given by
 \[
 a(\lambda)d(x_i,\partial \Om):= (c_1(\lambda)+c_2(\lambda))d(x_i,\partial \Om).
 \]
 Note that the function $a(\cdot)$ is increasing on $[0,1/2]$. As $x$ lies in the intersection of the balls $2B_i$ centered at $x_i$, we find for all $i=1,\ldots,M$ that
\begin{displaymath}
d(x_1,x_i) \leq a(\lambda)(d(x_1,\partial \Om)+d(x_i,\partial \Om))
\end{displaymath}
and hence
\begin{displaymath}
d(x_i,\partial \Om)\geq d(x_1,\partial \Om) - d(x_1,x_i)\geq (1-a(\lambda)) d(x_1,\partial \Om)- a(\lambda)d(x_i,\partial \Om).
\end{displaymath}
This implies
\begin{displaymath}
d(x_i,\partial \Om) \geq \frac{1-a(\lambda)}{1+a(\lambda)}d(x_1,\partial \Om).
\end{displaymath}
 Moreover, for all $i$ we have $2B_i\subset B(x_1, 3R_1)$, with $3R_1=3a(\lambda)d(x_1, \partial \Om)$. If $x_i$ and $x_j$ are distinct centers of balls in the same family $\mathcal{G}_k$ we have, by disjointedness of the balls in $\mathcal{G}_k$, that
 \begin{displaymath}
 d(x_i,x_j)\geq \frac{1}{5}\frac{a(\lambda)}{2}\min\{d(x_i,\partial\Om),d(x_j,\partial\Om)\}\geq
  \frac{1}{5}\frac{a(\lambda)}{2}  \frac{1-a(\lambda)}{1+a(\lambda)}d(x_1,\partial \Om).
 \end{displaymath}
 That is, such points form a $\delta(\lambda)$-separated set for $$\delta(\lambda):= \frac{1}{5}\frac{a(\lambda)}{2}  \frac{1-a(\lambda)}{1+a(\lambda)}d(x_1, \partial \Om)$$ and they are all included in
  a ball of radius $R(\lambda):=3a(\lambda)d(x_1, \partial \Om)$. It is important for us to observe
 that $\delta(\lambda)/R(\lambda)$ can be bounded from below by a strictly positive number which does not depend on $\lambda$. This is the case since
 \begin{displaymath}
 \frac{1-a(\lambda)}{1+a(\lambda)}\geq \frac{1-a(1/2)}{1+a(1/2)}>0
 \end{displaymath}
 for all $\lambda\in (0,1/2)$. The doubling property (see Lemma 4.1.12 in \cite{MR3363168}) gives us that at most $N'$ of the balls $2B_i$ can have their centers in $\mathcal{F}_k$ for a fixed $k$, where $N'$ is a constant which depends only on the doubling constant associated to the metric $d$ and the universal lower bound for $\delta(\lambda)/R(\lambda)$. Next we show that the centers of the balls in our family $2B_1,\ldots,2B_M$ can lie in at most two different `layers' $\mathcal{F}_k$, which will provide the desired universal upper bound for $M$. Indeed, assume that $x_1\in\mathcal{F}_{k_1}$. Then, for every $i\in\{1,\ldots,M\}$, we find
 \begin{displaymath}
 d(x_1,\partial \Omega) \geq d(x_i,\partial \Omega) \geq \frac{1-a(\lambda)}{1+a(\lambda)} d(x_1,\partial \Omega)\geq \frac{1-a(1/2)}{1+a(1/2)} d(x_1,\partial \Omega)> \frac{1}{2}d(x_1,\partial \Omega).
 \end{displaymath}
The last estimate finally explains our choice of the bound $\lambda< 1/2$. This estimate shows
that all centers are contained in $\mathcal{F}_{k_1-1}\cup \mathcal{F}_{k_1}$ for some $k_1\in\mathbb{Z}$.
\end{proof}

\begin{proof}[Proof of Theorem~\ref{t:comparable}] Throughout the proof we work with the metric $d=d_{\mathbb{H}^1}$; the corresponding result for $d_s$ can be deduced from the final statement for $d_{\mathbb{H}^1}$.
Let $\lambda\in (0,\frac{1}{2})$ be the largest number for which the following conditions are satisfied:
\begin{enumerate}
\item  $\lambda \leq 2/(10k)$ where $k$ is\footnote{The exact value of $c$ is not essential, any constant larger than $1$ which depends at most on $K$ would work.}  as in Proposition~\ref{p:ball_dist} applied to $f$ and $c=2$,
\item $\lambda \leq 2/c$, where $c$ is as in Theorem \ref{t:GehringHeisenberg} applied to $f$,
\item $\lambda \leq 2/(10k)$, where $k$ is as in Proposition \ref{p:A_p_Jacobian} applied to $f$.
\end{enumerate}
These conditions are such that $\lambda$ is a positive constant depending on $K$, and every ball $B:=B(x_0,r)\subset \Omega$ with $\mathrm{diam} B \leq \lambda \mathrm{dist}(B,\partial \Omega)$ satisfies the assumptions of the following results (applied to the map $f$):
\begin{enumerate}
\item Proposition~\ref{p:ball_dist} (ball distortion) and Lemma~\ref{lem:afHI} (Harnack-type inequality for $a_f$) for $c=2$,
\item  Theorem~\ref{t:GehringHeisenberg} (higher integrability),
\item Proposition~\ref{p:A_p_Jacobian} (weight property of the Jacobian).
\end{enumerate}
We will prove a statement for such balls which in addition satisfy a lower bound on the diameter:
 \begin{equation}\label{whitney-decomp}
\frac{\lambda}{4}\dist(B, \partial \Om) \leq \diam B\leq \lambda \dist(B, \partial \Om).
 \end{equation}
 Following the approach in the proof of \cite[Theorem 3.4]{MR1191747},
 our first step is to obtain a double inequality comparing $a_f(x_0)$ to a mean value of the appropriate power of $\|D_Hf\|$ over the ball $B$.

 By quasiconformality of $f$, condition \eqref{whitney-decomp}, Proposition~\ref{p:ball_dist} applied to $B$, $f$ and $c=2$, and Theorem~\ref{t:KoebeHeis}, we obtain that
 \begin{align*}
  \Barint_{B} \|D_Hf\|^4 \;\mathrm{d}m\leq K \frac{|f(B)|}{|B|} = K \frac{\mathrm{diam}(f(B))^4}{\mathrm{diam}(B)^4}&\leq K \frac{(2 d(f(x_0),\partial \Omega'))^4}{(\frac{\lambda}{4}\mathrm{dist}(B,\partial \Omega))^4}\\
  &\leq K \frac{(2 d(f(x_0),\partial \Omega'))^4}{(\frac{\lambda}{2(\lambda+2)}d(x_0,\partial \Omega))^4}\\
  &\leq \frac{K  4^4 (\lambda +2)^4 c_K^4}{\lambda^4} a_f^4(x_0).
 \end{align*}
 Thus, denoting $C'(K):= \frac{K^{1/4} 4 (\lambda +2) c_K}{\lambda} $, we find that
  \begin{equation}
  \Barint_{B} \|D_Hf\|^4 \;\mathrm{d}m\leq C'(K)^4 a_f^4(x_0). \label{est:t-compa2}
 \end{equation}
 We emphasize that $C'(K)$ depends on $K$ only (as $\lambda$ was chosen depending only on $K$, see the discussion in the beginning of the proof).

 In order to obtain a similar estimate for $a_f(x_0)$ from above, we appeal to a reasoning similar to the proof of Lemma~\ref{l:a_f_comparison}. Namely, let $B_1:= B(x_0,d(x_0,\partial \Omega))$. Recall that by the discussion in Section~\ref{s:JacobianQC} we have that $\log J_f \in \mathrm{BMO}(\Omega)$ and $\|\log J_f\|_{\ast}$ can be bounded in terms of $K$. Consequently
\begin{equation}
|(\log J_f)_{B_1}-(\log J_f)_{B}|\leq C \|\log J_f\|_{\ast},
\end{equation}
for a constant $C$ depending on $K$ via the ratio
$$
\frac{|B_1|}{|B|}=\frac{d(x_0,\partial \Omega)^4}{r^4}\leq\frac{(r+\dist(B,\partial \Om))^4}{r^4}\leq \left(1+\frac{8}{\lambda}\right)^4,
$$
see \eqref{whitney-decomp}. Therefore, since $\|\log J_f\|_{\ast}$ can be bounded by a constant in terms of $K$, we have
\begin{align}
a_f(x_0)^4&=\exp\left(\left(\log J_f\right)_{B_1}\right) \leq \exp\left(\left(\log J_f\right)_{B}+C \|\log J_f\|_{\ast}\right) \nonumber \\
&\leq C(K) \Barint_{B} J_f \;\mathrm{d}m\leq C(K) \Barint_{B} \|D_Hf\|^4\;\mathrm{d}m \label{est:comp}
\end{align}
for a constant $C(K)$ which depends only on $K$.
Here, we have used the Jensen inequality for the convex function $e^t$ and the Hadamard inequality in order to estimate $J_f=(\det D_H f)^2$ in terms of $\|D_Hf\|^4$.

So far we have shown that $a_f(x_0)$ is comparable to the average of $\|D_Hf \|^4$ over $B=B(x_0,r)$. The next goal is to replace ``$4$'' by a different power.
Starting from \eqref{est:comp}, we apply the H\"older inequality, the Gehring-type estimate in Theorem~\ref{t:GehringHeisenberg} (with exponent $p>4$) together with Proposition~\ref{p:A_p_Jacobian} to arrive at the following estimates:
\begin{align}
 a_f(x_0)&\leq C(K) \left(\Barint_{B} \|D_Hf\|^4\;\mathrm{d}m\right)^{\frac{1}{4}}\leq C(K) \left(\Barint_{B} \|D_Hf\|^p\;\mathrm{d}m\right)^{\frac{1}{p}} \nonumber \\
 &\leq C(K) K^{1/4} \left(\Barint_{B} J_f^{\frac{p}{4}}\;\mathrm{d}m\right)^{\frac{1}{p}} \leq c(K)  \left(\Barint_{B} J_f\;\mathrm{d}m\right)^{\frac{1}{4}} \nonumber \\
 &\leq c(K) \left(\Barint_{B} J_f^{1-\frac{p}{4}}\;\mathrm{d}m\right)^{\frac{1}{4-p}}\leq  c(K) \left(\Barint_{B} \|D_Hf\|^{4-p} \;\mathrm{d}m \right)^{\frac{1}{4-p}}. \label{est:t-compa1}
\end{align}

 As in the proof of \cite[Theorem 3.4]{MR1191747} we recall the following inequality for $g\in L^1(B)$ and $\epsilon>0$, whose proof is a direct consequence of the H\"older and the Jensen inequalities:
 \begin{equation}
\left(\Barint_{B} \frac{1}{|g|^\epsilon}\;\mathrm{d}m\right)^{-\frac{1}{\epsilon}}\leq \Barint_{B} |g|\;\mathrm{d}m. \label{ineq:neg-holder}
 \end{equation}
 This estimate applied for $g:=\|D_Hf\|$ and $\epsilon:=p-4$, together with the H\"older inequality, gives the following:
 \begin{equation}
   \left(\Barint_{B} \|D_Hf\|^{4-p}\;\mathrm{d}m \right)^{\frac{1}{4-p}}\leq \Barint_{B} \|D_Hf\| \;\mathrm{d}m  \leq \left(\Barint_{B} \|D_Hf\|^4 \;\mathrm{d}m \right)^{\frac{1}{4}}.
 \end{equation}
 This combined with \eqref{est:t-compa2}  results in a lower integral estimate for $a_f(x_0)$ in terms of $\|D_Hf\|^{4-p}$:
 \begin{equation}
   \left(\Barint_{B} \|D_Hf\|^{4-p} \;\mathrm{d}m  \right)^{\frac{1}{4-p}}\leq C'(K)\, a_f(x_0).\label{est:t-compa3}
 \end{equation}
 At this stage we apply Lemma~\ref{lem:afHI} (a Harnack-type inequality) together with estimate~\eqref{est:t-compa1} (for $0<q<p$) or
  estimate~\eqref{est:t-compa3} (for $4-p<q<0$) and obtain that
 \[
  \Barint_{B} a_f^q\;\mathrm{d}m\leq C^q \Barint_{B} a_f(x_0)^q\;\mathrm{d}m = C^q a_f(x_0)^q \leq C(K) \left(\Barint_{B} \|D_Hf\|^{4-p} \;\mathrm{d}m\right)^{\frac{q}{4-p}},
 \]
 where the constant $C(K)$ arises as a product of $q$-th powers of the constants $C$ in Lemma~\ref{lem:afHI} (for $c=2$) and $c(K)$ in \eqref{est:t-compa1} (or $C'(K)$ \eqref{est:t-compa3}, depending on the sign of $q$). We wish to estimate the above integral further from above.

 We consider three cases: (1) $4-p<q<0$, (2) $0<q<1$, (3) $1\leq q <p$. In the first case, the H\"older inequality gives us that
 \[
  \left(\Barint_{B} \frac{\mathrm{d}m}{\|D_Hf\|^{-q}}\right)^{-\frac{1}{q}} \leq  \left(\Barint_{B} \frac{\mathrm{d}m}{\|D_Hf\|^{(-q)\frac{p-4}{-q}}}\right)^{\frac{1}{p-4}}.
 \]
 In the second case a direct application of \eqref{ineq:neg-holder} for $g:=\|D_Hf\|^q$ and $\epsilon:=(p-4)/q$ results in the following estimate:
 \begin{equation}
 \left(\Barint_{B} \|D_Hf\|^{4-p} \mathrm{d}m \right)^{\frac{q}{4-p}} \leq \Barint_{B} \|D_Hf\|^q \mathrm{d}m. \label{est:t-compa4}
 \end{equation}
 Finally, in the third case we apply \eqref{ineq:neg-holder} and the H\"older inequality to obtain the estimate~\eqref{est:t-compa4}. Therefore, as a consequence of the above case analysis, we get
 \[
  \Barint_{B} a_f^q \;\mathrm{d}m \leq c(K) \Barint_{B} \|D_Hf\|^q\;\mathrm{d}m.
 \]
 By the analogous estimates we obtain the lower bound for the mean value of $a_f^q$ over $B$. Thus, it holds that
 \[
  \frac{1}{c(K)}\Barint_{B} a_f^q \;\mathrm{d}m\leq \Barint_{B} \|D_Hf\|^q \;\mathrm{d}m \leq c(K) \Barint_{B} a_f^q \;\mathrm{d}m.
 \]
 In the last step we apply the Whitney decomposition argument and show that $\Om$ can be expressed as a union of balls with controlled overlap satisfying \eqref{whitney-decomp}. That this is indeed the case, follows from Lemma~\ref{l:whitney}.
\end{proof}

As in the Euclidean case we have the following consequence of Theorem~\ref{t:comparable}, cf. Corollary 3.5 in \cite{MR1191747}.

\begin{cor}
Let $f:\Omega \to \Omega'$ be a $K$-quasiconformal mapping between domains in $\mathbb{H}^1$ for some $K\geq 1$.
Then for all $4-p < q < p$ with $p=p(K)>4$ and $c$ depending on $K$ and $q$, it holds that
\begin{displaymath}
\frac{1}{c} \int_{\Omega'} a_{\finv}(x)^{4-q}\;\mathrm{d}\mu \leq \int_{\Omega} a_f(x)^q\;\mathrm{d}\mu \leq c \int_{\Omega'} a_{\finv}(x)^{4-q}\;\mathrm{d}\mu.
\end{displaymath}
\end{cor}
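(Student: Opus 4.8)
The plan is to reduce the statement, via Theorem~\ref{t:comparable}, to a comparison between $\int_{\Om} \|D_H f\|^q \,\mathrm{d}m$ and $\int_{\Om'} \|D_H \finv\|^{4-q}\,\mathrm{d}m$, and then to establish the latter by a change of variables. First I would apply Theorem~\ref{t:comparable} to $f$, obtaining $\int_{\Om} a_f^q\,\mathrm{d}m \simeq \int_{\Om} \|D_H f\|^q\,\mathrm{d}m$ for every $q \in (4-p,p)$. Since $\finv$ is again $K$-quasiconformal with the same Gehring exponent $p=p(K)$, I would apply the same theorem to $\finv$ with exponent $4-q$; the admissibility condition $4-q\in(4-p,p)$ is equivalent to $q\in(4-p,p)$, so it holds precisely in the stated range and gives $\int_{\Om'} a_{\finv}^{4-q}\,\mathrm{d}m \simeq \int_{\Om'}\|D_H\finv\|^{4-q}\,\mathrm{d}m$, with constants depending on $K$ and $q$.

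It then remains to compare the two integrals of operator norms. The key pointwise identity is that, at almost every $x\in\Om$, the Pansu chain rule applied to $\finv\circ f=\mathrm{id}$ gives $D_H\finv(f(x)) = (D_H f(x))^{-1}$, and for a $2\times 2$ matrix $A$ with singular values $\sigma_1\ge\sigma_2>0$ one has $\|A^{-1}\| = 1/\sigma_2 = \|A\|/|\det A|$. Since $J_f = (\det D_H f)^2$, this reads
\[
\|D_H\finv(f(x))\| = \frac{\|D_H f(x)\|}{J_f(x)^{1/2}} \quad\text{for a.e.\ } x.
\]
Performing the change of variables $y=f(x)$, $\mathrm{d}m(y)=J_f(x)\,\mathrm{d}m(x)$, justified exactly as in the proof of Proposition~\ref{p:A_p_Jacobian} via the transformation formula and the absolute continuity in measure of quasiconformal maps, I obtain
\[
\int_{\Om'}\|D_H\finv\|^{4-q}\,\mathrm{d}m = \int_{\Om}\|D_H f\|^{4-q}\,J_f^{(q-2)/2}\,\mathrm{d}m.
\]

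Finally I would invoke the two-sided quasiconformal comparison $J_f \le \|D_H f\|^4 \le K J_f$, which follows from the Hadamard inequality $J_f=(\det D_Hf)^2\le\|D_Hf\|^4$ and the distortion inequality \eqref{eq:analyt_dist_ineq}, so that $J_f^{1/2}\simeq \|D_H f\|^2$ with constant depending only on $K$. Substituting $J_f^{(q-2)/2}\simeq\|D_H f\|^{2q-4}$ into the integrand collapses the exponent, $4-q+(2q-4)=q$, and yields $\int_{\Om'}\|D_H\finv\|^{4-q}\,\mathrm{d}m \simeq \int_{\Om}\|D_H f\|^q\,\mathrm{d}m$ with constant depending on $K$ and $q$. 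Chaining this comparison with the two applications of Theorem~\ref{t:comparable} gives the claim. I expect the only delicate point to be the rigorous justification of the change of variables together with the a.e.\ identity $D_H\finv\circ f=(D_H f)^{-1}$, that is, that $f$ and $\finv$ are simultaneously Pansu differentiable with invertible horizontal differential off a common null set; this rests on the absolute continuity in measure of quasiconformal maps and the Pansu chain rule, while the remaining algebra is routine.
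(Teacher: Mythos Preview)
Your proposal is correct and follows essentially the same approach as the paper, which simply refers to the change of variable formula and the analogous Euclidean argument in Astala--Koskela. You have supplied the details the paper omits: applying Theorem~\ref{t:comparable} to both $f$ and $\finv$, the pointwise identity $\|D_H\finv(f(x))\|=\|D_Hf(x)\|/J_f(x)^{1/2}$, and the substitution $J_f^{(q-2)/2}\simeq\|D_Hf\|^{2q-4}$ via the distortion inequality.
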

\begin{proof}
 The proof follows the same lines as the proof of the corresponding result, Corollary 3.5 in \cite{MR1191747}, and is based on the change of variable formula, see e.g. Theorem 5.4(a) in \cite{MR1778673} and the proof of Proposition~\ref{p:A_p_Jacobian}.
\end{proof}

\subsection{Quasiconformal metrics on domains in $\Hei$}\label{sec:metrics}

 In \cite{bkr98}, the authors study quasiconformal metrics (more precisely, densities) defined on the unit ball in $\R^n$.
The terminology is motivated by the fact that a  conformal map from the planar unit disc into $\C$ is, up to post-compositions with isometries, uniquely determined by the absolute value of its derivative, and hence the latter can be thought of as a `density' on the unit disk.

 Let $\mathbb{B}=B(0,1)\subset \R^n$ be the unit ball in the Euclidean space $\mathbb{R}^n$. Let further $\varrho:\mathbb{B}\to (0,\infty)$ be a strictly positive continuous function (called a \emph{density}), satisfying the following conditions, cf. Section 1 in~\cite{bkr98}:
 \begin{itemize}
 \item[(1)] (Harnack-type inequality.)\, There exist constants  $\lambda \in (0,1)$ and $c\geq 1$ such that
 \[
  \frac{1}{c}\leq \frac{\varrho(x)}{\varrho(y)}\leq c,\quad \hbox{for all }x,y\in B(z,\lambda d(z,\partial \mathbb{B})) \quad \hbox{for }z\in \mathbb{B}.
 \]
 \item[(2)] (Upper Ahlfors regularity with respect to $d_{\varrho}$.)\, There exists a constant $A>0$ such that
 \begin{equation}
  \mu_{\varrho}(B_{\varrho}(x,r)):=\int_{B_{\varrho}(x,r)}\varrho^n(y)\,d\mathcal{L}^n(y) \leq Ar^n,\quad \hbox{for all }x\in\mathbb{B}, r>0. \label{p:densities-Ahlf}
 \end{equation}
 Here $B_{\varrho}(x,r)$ stands for an open ball with respect to the length metric
 $d_{\varrho}(a,b):=\inf_{\gamma\subset \mathbb{B}} l_{\varrho}(\gamma)$ with  weighted length $l_{\varrho}(\gamma)=\int_{\gamma}\varrho\,ds$ and locally rectifiable curves $\gamma$ joining  $a,b\in \mathbb{B}$.
 \end{itemize}

It turns out, see \cite{bkr98}, that these simple conditions imposed on a density function are enough to infer several interesting geometric properties of distances defined via such densities.
 Among the examples of such densities studied in \cite[Section 2.4]{bkr98}, is
 \[
 \varrho:=\left(\Barint_{B(x,\dist(x,\partial \mathbb{B}))} \,J_f\,d\mathcal{L}^n\right)^{\frac{1}{n}},
 \]
 where $f:\mathbb{B}^n\to \Om$ is a $K$-quasiconformal mapping from the unit ball in $\R^n$ into a domain $\Om\subset \R^n$. The purpose of this section is to show a counterpart of this observation for  quasiconformal mappings between domains in $\Hei$.
The results of this paper allow us to move beyond the setting of mappings from a unit ball and study more general domains in $\Hei$.
The following holds both for $d=d_s$ and $d=d_{\mathbb{H}^1}$, and the length element $ds$ in the definition of $l_{\varrho}$ taken with respect to the distance $d$:
 \begin{proposition}\label{p:a_f_conf_density}
  Let $f:\Om \to \Om'$ be a $K$-quasiconformal map between domains $\Om,\Om' \subsetneq\Hei$. Then the function $a_f$ possesses the following properties:
 \begin{itemize}
 \item[(1)] There exists a constant $\lambda \in (0,1)$ such that for all balls $B\subset \Om$ satisfying $\mathrm{diam} B \leq \lambda \mathrm{dist}(B,\partial \Om)$, it  holds
 \[
   \frac{1}{M} a_f(x)\leq a_f(y) \leq M a_f(x)\quad \hbox{for all } x,y\in B,
  \]
  with the equivalence constant $M$ depending on $K$ and the choice of $d\in \{d_s,d_{\Hei}\}$.
 \item[(2)] The upper Ahlfors regularity holds for the measure $\mu_{\varrho}$ as in \eqref{p:densities-Ahlf} with
 $\varrho=a_f$, $n=4$ and constants depending on $K$ and the choice of $d\in \{d_s,d_{\Hei}\}$.
  \end{itemize}
 \end{proposition}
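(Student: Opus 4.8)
The plan is to treat the two assertions separately: part (1) is essentially a reformulation of an already established fact, while part (2) requires the bulk of the work. For part (1) I would simply invoke the Harnack-type inequality of Lemma~\ref{lem:afHI}. Choosing $\beta=2$ there produces a constant $k$, and taking $\lambda\leq 2/(3k+1)$, any ball $B$ with $\diam B\leq\lambda\dist(B,\partial\Om)$ satisfies the hypotheses of that lemma. Applying it twice, once to the pair $(x,y)$ and once to $(y,x)$, yields $C^{-1}a_f(y)\leq a_f(x)\leq C a_f(y)$ for all $x,y\in B$, which is precisely the claimed equivalence $a_f(x)\approx a_f(y)$, with constants depending only on $K$ and the data of $\Hei$.

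For part (2) the goal is the bound $\mu_{a_f}(B_{a_f}(x,r))=\int_{B_{a_f}(x,r)}a_f^4\,dm\leq A r^4$, and I would obtain it by comparing both the $d_{a_f}$-balls and the density measure to their images under $f$. The first, metric, step is to show that $f$ does not increase the inner distance of $\Om'$ by more than a factor controlled by $d_{a_f}$, namely $d^{\mathrm{inner}}_{\Om'}(f(a),f(b))\leq C\,d_{a_f}(a,b)$ for all $a,b\in\Om$. To prove this I would fix a curve $\gamma$ from $a$ to $b$ and subdivide it at Whitney scale into consecutive arcs $\gamma_j$, each of $d$-length comparable to $\epsilon\,\dist(\gamma_j,\partial\Om)$ for a small $\epsilon=\epsilon(K)$. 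Each such arc satisfies the length condition \eqref{eq:curve_length_cond}, so Proposition~\ref{t:diam} gives $\diam_d(f\circ\gamma_j)\leq C\int_{\gamma_j}a_f\,ds$; moreover, by Corollary~\ref{CorKoskAnalog} the images $f(\gamma_j)$ are small relative to their distance to $\partial\Om'$, so consecutive image endpoints can be joined inside $\Om'$ by curves of length $\lesssim\diam_d(f\circ\gamma_j)$. Summing over $j$ and taking the infimum over $\gamma$ yields the inner-distance bound. Since $d^{\mathrm{inner}}_{\Om'}\geq d$, it follows that $f(B_{a_f}(x,r))\subseteq B(f(x),Cr)$, whence $|f(B_{a_f}(x,r))|\leq |B(f(x),Cr)|\leq c\,r^4$ by the Ahlfors $4$-regularity of $m$.

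The second, measure-theoretic, step is to show $\mu_{a_f}(B_{a_f}(x,r))\leq C\,|f(B_{a_f}(x,r))|$, after which the two steps combine to give the claim. Here I would decompose $\Om$ into a Whitney family $\{B_i\}$ as in Lemma~\ref{l:whitney}, with $\lambda$ small enough that each $B_i$ satisfies the hypotheses of Lemma~\ref{lem:afHI}, Theorem~\ref{t:KoebeHeis} and Proposition~\ref{p:A_p_Jacobian}. On each $B_i$ the Harnack inequality of part (1) makes $a_f$ comparable to the constant $a_f(x_i)$, while Koebe's theorem together with the change of variables formula and the $A_p$-estimate give $\int_{B_i}a_f^4\,dm\approx a_f(x_i)^4|B_i|\approx|f(B_i)|$; the same comparison applies to the intersection $B_{a_f}(x,r)\cap B_i$, which is either all of $B_i$ or a small ball lying deep inside $\Om$, so that $\mu_{a_f}(B_{a_f}(x,r)\cap B_i)\approx |f(B_{a_f}(x,r)\cap B_i)|$ in either case. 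Summing over $i$, and using that the bounded overlap of the $B_i$ passes to bounded overlap of the injective images $f(B_i)$, produces $\mu_{a_f}(B_{a_f}(x,r))\approx|f(B_{a_f}(x,r))|\lesssim r^4$.

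I expect the main obstacle to be the metric comparison in the first step, and more broadly the fact that $a_f$ is an \emph{averaged} rather than a pointwise derivative: Proposition~\ref{t:diam} only controls diameters of image curves, so passing to a genuine inner-length (and hence containment) statement forces the Whitney-scale subdivision, and one must separately check that short, deep arcs---where the length condition \eqref{eq:curve_length_cond} may fail---are still controlled, for instance via Proposition~\ref{c:dist_est}. Once the comparison $d^{\mathrm{inner}}_{\Om'}(f(\cdot),f(\cdot))\lesssim d_{a_f}$ and the density-to-image equivalence $\mu_{a_f}\approx|f(\cdot)|$ on $d_{a_f}$-balls are in place, the upper Ahlfors bound follows immediately.
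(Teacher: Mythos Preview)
Your treatment of part (1) is correct and matches the paper's: both simply invoke Lemma~\ref{lem:afHI} for a fixed value of $\beta$.

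For part (2), however, your first step contains a genuine error. The inner-distance comparison $d^{\mathrm{inner}}_{\Om'}(f(a),f(b))\leq C\,d_{a_f}(a,b)$, and with it the containment $f(B_{a_f}(x,r))\subseteq B(f(x),Cr)$, is false in general. The radial stretch of Remark~\ref{r:example} already exhibits this: for $\Om=B(0,1)$, $k=1/2$, and $x=0$, the Harnack inequality gives $a_f\approx a_f(0)$ near the origin, so $B_{a_f}(0,r)$ contains a sub-Riemannian ball of radius $\approx r/a_f(0)$, whose image under $f$ has diameter $\approx (r/a_f(0))^{1/2}\gg r$ as $r\to 0$. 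This is exactly the ``short, deep arc'' obstruction you flag; Proposition~\ref{c:dist_est} does not rescue it, since that estimate is of H\"older type $d(f(z_1),f(z_2))\lesssim a_f(z_1)d(z_1,\partial\Om)^a d(z_1,z_2)^{1-a}$ and for such arcs $d(z_1,\partial\Om)\gg d(z_1,z_2)$, so the right-hand side is much larger than $a_f(z_1)d(z_1,z_2)\approx\int_\gamma a_f\,ds$.

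The paper circumvents this by splitting on the scale of $r$. If $r\leq c(K)\,a_f(x)d(x,\partial\Om)$, one never maps by $f$ at all: Harnack forces $B_{a_f}(x,r)\subseteq B_s(x,Cr/a_f(x))$, and then $\mu_{a_f}(B_{a_f}(x,r))\leq C^4 a_f(x)^4 m(B_s(x,Cr/a_f(x)))\lesssim r^4$ directly. Only in the complementary regime $r\gtrsim a_f(x)d(x,\partial\Om)\approx d(f(x),\partial\Om')$ does one argue via $f$, and there the containment $f\bigl(\bigcup_{B\in\mathcal{C}_x}B\bigr)\subseteq B(f(x),cr)$ does hold: for $y$ far from $x$ Proposition~\ref{t:diam} applies to a connecting curve, while for $y$ close to $x$ the egg yolk principle gives $d(f(x),f(y))\lesssim d(f(x),\partial\Om')\lesssim r$, the last inequality being precisely what the case assumption supplies. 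Your second step (Whitney decomposition, $\int_B a_f^4\lesssim\int_B J_f$, bounded overlap) is then essentially what the paper uses to conclude in this second regime.
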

 \begin{proof}
  Assertion (1) follows from Lemma~\ref{lem:afHI} applied to a fixed universal constant $c$.

  In order to prove the second assertion for $d=d_s$ (and a posteriori for $d=d_{\mathbb{H}^1}$) we follow the steps of the proof of the corresponding property for quasiconformal mappings from a unit ball in $\R^n$ into $\R^n$, see \cite[Section 2.4]{bkr98}. Let $x\in \Om$ and $r>0$. We consider two cases.

  \medskip

 {\sc Case 1}: Suppose that $r\leq c(K) a_f(x)d_s(x,\partial \Om)$ for a constant $0<c(K)<1$ depending only on $K$ and to be determined later. Let $\lambda:=2/(10k)>0$ be the constant from Lemma \ref{lem:afHI} associated to, say, $c=2$. We have the following inclusion of sub-Riemannian balls
  \begin{displaymath}
  B_s \left(x,\frac{\lambda r}{(\lambda+2) c(K) a_f(x)}\right) \subseteq B_s \left(x,\frac{\lambda}{\lambda+2}d_s(x,\partial \Omega)\right).
  \end{displaymath}
  Here the radius of the smaller ball has been chosen so that it is included in a ball which satisfies the assumption of Lemma \ref{lem:afHI}, so that a Harnack-type inequality for $a_f$ is valid on that ball. (Note that the constants given by Lemma \ref{lem:afHI} depend on the choice of the metric $d=d_s$).

  Consider now $z\in B_{\varrho}(x,r)$. By definition,
  \begin{displaymath}
  d_{\varrho}(x,z)=\inf_{\gamma_{xz}} \int_{\gamma_{xz}} a_f ds = a_f(x) \inf_{\gamma_{xz}} \int_{\gamma_{xz}} \frac{a_f}{a_f(x)} ds<r
  \end{displaymath}
  where $\gamma_{xz}$ is a an arbitrary (locally rectifiable) curve  joining $x$ and $z$ within $\Omega$. The plan is to apply Lemma \ref{lem:afHI} in order to bound this quantity from below by $\frac{1}{C} a_f(x) d_s(x,z)$ for a positive and finite constant
   $C>1$, which depends only on $K$. To justify the application of  Lemma \ref{lem:afHI}, it suffices to ensure that we can consider curves $\gamma_{xz}$ which stay inside the sub-Riemannian ball $B_s(x,\frac{\lambda r}{(\lambda+2) c(K) a_f(x)})$. Let us explain why this is the case. First, since $z\in B_{\varrho}(x,r)$, there exists a rectifiable curve $\gamma_{xz}$ which connects $x$ to $z$ and satisfies
  \begin{equation}\label{eq:curve_less_r}
  \int_{\gamma_{xz}}a_f \;\mathrm{d}s<r.
  \end{equation}
  In the definition of $d_{\varrho}(x,z)$ we can restrict the infimum to curves satisfying \eqref{eq:curve_less_r}.
Assume that such a curve $\gamma_{xz}$ exits $B_s(x,\frac{\lambda r}{(\lambda+2) c(K) a_f(x)})$. Then, by connectedness, there must exist a (first) point $w$ on the trace of $\gamma_{xz}$ with
$$
w\in \partial B_s\left(x,\frac{\lambda r}{(\lambda+2) c(K) a_f(x)}\right).
$$
We denote by $\gamma_{xw}$ the subcurve of $\gamma_{xz}$ which connects $x$ and $w$ inside $B_s(x,\frac{\lambda r}{(\lambda+2) c(K) a_f(x)})$. Since $a_f$ is a positive function, we find
\begin{align*}
\int_{\gamma_{xz}}a_f\;\mathrm{d}s&\geq \int_{\gamma_{xw}}a_f\;\mathrm{d}s\geq \frac{1}{C} a_f(x) \int_{\gamma_{xw}}\;\mathrm{d}s\geq \frac{1}{C} a_f(x) d_s(x,w) = \frac{\lambda r}{(\lambda+2) \cdot  C \cdot c(K)}.
\end{align*}
We may choose $0<c(K)<1$ such that
\begin{equation}\label{eq:c(K)}
c(K)< \frac{\lambda}{(\lambda+2)C},
\end{equation}
which leads to a contradiction to the assumption $\int_{\gamma_{xz}}a_f\;\mathrm{d}s<r$. With this choice of $c(K)$, we may restrict the curves in the definition of $d_{\varrho}(x,z)$ to those curves $\gamma_{xz}$ along which the Harnack inequality for $a_f$ is valid, and we find
\begin{displaymath}
\frac{1}{C} a_f(x) d_s(x,z) \leq d_{\varrho}(x,z) < r.
\end{displaymath}
In particular we have for our choice of $c(K)$ that
\begin{displaymath}
B_{\varrho}(x,r) \subseteq B_s\left(x,C \frac{r}{a_f(x)}\right) \subseteq B_s \left(x,\frac{\lambda}{\lambda+2}d_s(x,\partial \Omega)\right),
\end{displaymath}
for $0<r\leq c_K a_f(x) d_s(x,\partial \Omega)$.
Since the Harnack inequality from Lemma \ref{lem:afHI} is valid on $B_{\varrho}(x,r)$, we find
  \[
   \mu_{\varrho}(B_{\varrho}(x,r))=\int_{B_{\varrho}(x,r)} a_f(y)^4\,\mathrm{d}y\leq C^4 a_f(x)^4 m\left(B_{\varrho}(x,r)\right).
  \]
Thus, we obtain:
  \begin{align*}
   \mu_{\varrho}(B_{\varrho}(x,r))
   \leq C^4 a_f(x)^4 m\left(B_s\left(x, C \frac{r}{a_f(x)}\right)\right) \leq C^8 r^4
  \end{align*}
  and the proposition is proven in this case.

  \medskip

{\sc Case 2}: Let us now consider the case $r\geq c(K) a_f(x)d_s(x,\partial \Om)$. Then, by Theorem~\ref{t:KoebeHeis} we have $r\geq (c(K)/c_K) d_s(f(x),\partial \Om')$. We will use this estimate below.

\medskip

 {\sc Step 1: the Whitney-type decomposition of $\Om$.} By Lemma~\ref{l:whitney} let us decompose $\Om$ as a union of balls satisfying the Whitney condition~\eqref{whitney-decomp} for $d_s$ and $\lambda>0$ the largest number, possibly different from the first part of the proof, for which the following conditions are satisfied
\begin{equation}\label{eq:condW}
\lambda < \frac{\lambda}{1-\lambda}\leq \frac15
\end{equation}
and
\begin{equation}\label{eq:condW2}
\lambda  \leq \frac{2}{10k}.
\end{equation}
The first condition is related to Proposition~\ref{p:egg_yolk} (egg yolk principle for $d_s$) and $k$ is as in Proposition~\ref{p:ball_dist} applied to $f$, $c=2$, and $d=d_s$. The value of $\lambda$ thus depends only on $K$ (and the metric $d_s$). The first condition, \eqref{eq:condW}, is to ensure quasisymmetry of $f$ on all the relevant balls which will appear later in the proof. The second condition, \eqref{eq:condW2}, is to guarantee that every ball in the constructed Whitney decomposition satisfies the assumptions of Lemma~\ref{lem:afHI} applied to the map $f$.

\medskip

 {\sc Step 2.} Let $\mathcal{C}_x$ be the collection of those sub-Riemannian balls $B$ in the chosen Whitney  decomposition for which
  $B\cap B_{\varrho}(x,r)\not=\emptyset$. Then, we claim that
  \begin{equation}
   f\left(\bigcup_{B\in \mathcal{C}_x}B\right)\subseteq B_{s}(f(x),
   cr),\label{p:condBKR}
  \end{equation}
  for some constant $c>0$, which can be bounded from above in terms of $K$.

  In order to show \eqref{p:condBKR}, let us consider $y\in B$ for $B\in \mathcal{C}_x$ and discuss separately the two
  cases: (i) $y\in B_{\varrho}(x,r)$ and (ii) $y\in B\setminus B_{\varrho}(x,r)$.
  In the first case, by the definition of $d_{\varrho}$ there exists a rectifiable curve
  $\gamma$ joining $x$ and $y$ with
  $l_{\varrho}(\gamma)=\int_{\gamma}a_f(s)ds<r$. Motivated by the egg yolk
  principle, Proposition~\ref{p:egg_yolk}, let
 $\alpha:=\frac{1}{5}$. If $d(x,y)\geq \alpha d(x, \partial \Om)$, then $\mathrm{length}(\gamma)\geq \alpha d(\gamma, \partial \Om)$ and so Proposition~\ref{t:diam} allows us to conclude the following estimate:
  \[
   d(f(x), f(y))\leq \diam f(\gamma)\leq C \int_{\gamma}a_f(s) ds
   <Cr.
  \]
 From this
 \begin{displaymath}
 f\bigg(B\cap B_{\varrho}(x,r)\cap \{y: d(x,y)\geq \alpha d(x,\partial \Omega)\}\bigg) \subseteq B_s(f(x),cr)
 \end{displaymath}
  with $c\geq C$ follows, which is a first step towards the proof of
 \eqref{p:condBKR}.

 If $d(x,y)< \alpha d(x, \partial \Om)$, we will invoke Proposition~\ref{p:egg_yolk}, which we may by our choice of $\alpha$.
 Applied to $f$ and $\Omega$, this shows that there is a constant $H$, depending only on $K$, such that $f$ is $H$-quasisymmetric when restricted to $B(x, \alpha d(x, \partial \Om))={B(x, \frac{d(x, \partial \Om)}{5})}$.

 For $t>0$ and $x_0  \in B(x, \frac{d(x, \partial \Om)}{5})$, set\footnote{In the notation from the beginning of Section~\ref{subs-2-3}, the above expressions correspond to $L_g(x_0,t)$ and $l_g(x_0,t)$ for $g=f|_{B(x,d(x,\partial \Om)/5)}$.}
 \begin{align*}
  L_f(x_0,t)&:=\sup_{\footnotesize{\{z\in B(x, \frac{d(x, \partial \Om)}{5}):\, d(x_0,z)\leq t\}}} d(f(x_0),
  f(z)),\\
  l_f(x_0,t)&:=\inf_{\{z\in B(x, \frac{d(x, \partial \Om)}{5}):\, d(x_0,z)\geq t\}} d(f(x_0),
  f(z)).
 \end{align*}
 With this notation, it holds that
  \begin{align*}
  d(f(x), f(y))
  \leq L_f(x, \alpha d(x, \partial \Om)) \leq H l_f(x, \alpha d(x, \partial \Om))
  \leq H d(f(x), \partial \Om') \leq H \frac{c_K}{c(K)}r.
 \end{align*}
 In the last step we use the assumption that $r\geq c(K)/c_K d(f(x),\partial
 \Om')$. Altogether we have shown that
 \begin{displaymath}
 f(B\cap B_{\varrho}(x,r)) \subseteq B_s(f(x),cr)
 \end{displaymath}
 holds with $c\geq \max\{C, H c_K/c(K)\}$. This concludes the discussion of
\eqref{p:condBKR} for case (i).

 For (ii), suppose that $y\in B \setminus B_{\varrho}(x,r)$ for some ball $B\in\mathcal{C}_x$. Then, by the definition of $\mathcal{C}_x$, there is $z\in B\cap  B_{\varrho}(x,r)$ and it holds that
 \[
  d(f(x), f(y))\leq  d(f(x), f(z))+  d(f(z), f(y)).
 \]
 The first term on the right-hand side above can be estimated by the reasoning of the previous case, since in particular $z\in B_{\varrho}(x,r)$. In order to estimate the second term, we proceed as follows. Let $x_B$ be the
 center of $B$. Then, by Whitney condition~\eqref{whitney-decomp} for $\lambda$ we observe that
 \[
  d(x_B,z)\leq \diam B\leq \lambda \dist(B, \partial \Om)\leq \lambda
  d(z, \partial \Om).
 \]
 Thus, $x_B\in B(z, \lambda d(z, \partial \Om))$. Using this observation together with the definition of Whitney-type decomposition~\eqref{whitney-decomp} with balls satisfying condition \eqref{eq:condW}, we see that the conclusion of the egg yolk principle holds on
 \begin{displaymath}
 B(x_B,\lambda d(x_B,\partial \Omega)) \supseteq B(x_B,\mathrm{diam}B) \supseteq B
 \end{displaymath}
 and on
 \begin{displaymath}
 B(x_B, d(x_B,\partial \Omega)/5))\supseteq B(z,\lambda d(z,\partial \Omega)).
 \end{displaymath}
 Thus, exploiting the quasisymmetry property of $f$ on the respective balls, we get by similar estimates as in the proof of \cite[Proposition 3.7.5]{LRj} that
 \begin{align*}
  d(f(z), f(y))&
  \leq 2H(H+1) \left(d(f(z), f(x))+d(f(x), \partial \Om')\right)\\
  &\leq  2H(H+1)\left(\max\{C,H \tfrac{c_K}{c(K)}\}+c(K)/c_K\right)r.
 \end{align*}
 In the last step we appeal to the previously discussed case (as $z \in B_{\varrho}(x,r)$) and use the assumption that $r\geq c(K)/c_K d(f(x),\partial \Om')$. This completes the proof of this case and the whole claim \eqref{p:condBKR}, as well.

 \medskip

 {\sc Step 3: the upper Ahlfors regularity.} In order complete the proof of the proposition we observe that Lemma~\ref{lem:afHI} together with the Jensen inequality for the exponential function and Lemma~\ref{l:a_f_comparison}, applied to a suitable $L$ depending on $\lambda$, allow us to infer that for $B\in \mathcal{C}_x$ it holds that
 \begin{equation}
  \int_{B} a_f(y)^4 \;\mathrm{d}m(y) \leq C' \int_{B} J_f(y)\;\mathrm{d}m(y) \label{eq:last}
 \end{equation}
 for a suitable constant $C'\geq 1$ which depends only on $K$,
 analogously as in the proof of Theorem~\ref{t:comparable}.
  Therefore,
  \begin{align*}
   \mu_{\varrho}(B_{\varrho}(x,r))
   \leq C' \sum_{B\in\mathcal{C}_x}\int_B J_f(y)\;\mathrm{d}m(y)
   \leq C m\left(B_{s}(f(x), cr)\right)
   \leq C'' r^4,
  \end{align*}
 by \eqref{p:condBKR} and the controlled overlap in the Whitney decomposition. Here the constants $C$ and $C''$ depend only on $K$. This completes the proof of the second assertion.
 \end{proof}

 Proposition \ref{p:a_f_conf_density} shows the upper Ahlfors regularity of $\mu_{\varrho}$. More can be said
 if $\Om\subset \Hei$ equipped with the sub-Riemannian distance $d_s$ is $L$-\emph{quasiconvex}, that is,
 if any two points $x,y\in \Om$ can be joined by a curve $\gamma$ such that its trace $|\gamma|$ is in $\Om$ and $\mathrm{length}(\gamma)\leq L d_s(x,y)$.
For examples of quasiconvex domains in $\Hei$, see \cite{HLT} and references therein.

   \begin{proposition}\label{p:stronger}
   Let $f:\Omega \to \Omega'$ be a $K$-quasiconformal map from a quasiconvex domain $\Omega \neq \mathbb{H}^1$ onto a domain $\Omega' \neq \mathbb{H}^1$. Then, there exist constants $0<c_1<c_2<\infty$ and $0<c(K)<1$ such that for all $x\in \Omega$ and all $0<r<c(K) a_f(x) d_s(x,\partial \Omega)$ one has
   \begin{displaymath}
  c_1 r^4 \leq  \mu_{\varrho}(B(x,r))\leq c_2 r^4.
   \end{displaymath}
   \end{proposition}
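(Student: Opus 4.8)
The plan is to read $B(x,r)$ in the statement as the ball $B_{\varrho}(x,r)$ with respect to the density metric $d_{\varrho}$, so that the assertion is two-sided Ahlfors $4$-regularity of $\mu_{\varrho}$ in the regime $0<r<c(K)a_f(x)d_s(x,\partial \Omega)$. The upper bound $\mu_{\varrho}(B_{\varrho}(x,r))\leq c_2 r^4$ is already contained in Proposition~\ref{p:a_f_conf_density}(2): the first case in that proof establishes exactly this estimate, and precisely for radii in the present range. Hence the only new content is the matching lower bound, and this is where quasiconvexity of $\Omega$ is used. Throughout I would work with $d=d_s$.

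For the lower bound I would first record a one-sided comparison between $d_{\varrho}$ and $d_s$ near $x$, coming from the Harnack inequality together with quasiconvexity. Let $\lambda$ be the sub-Riemannian Harnack parameter from Lemma~\ref{lem:afHI} (adjusted by the factor $\sqrt{\pi}$ when passing from $d_{\mathbb{H}^1}$ to $d_s$, exactly as in the proof of Proposition~\ref{p:a_f_conf_density}), and set $\rho_0:=\tfrac{\lambda}{2+\lambda}d_s(x,\partial \Omega)$, the largest radius for which $B_s(x,\rho_0)$ satisfies $\diam B\leq \lambda\dist(B,\partial \Omega)$. For $z$ with $d_s(x,z)\leq \rho_0/L$, quasiconvexity yields a curve $\gamma$ from $x$ to $z$ with $\mathrm{length}(\gamma)\leq L\,d_s(x,z)$; since $\gamma$ starts at $x$, its trace lies in $\overline{B_s(x,L\,d_s(x,z))}\subseteq \overline{B_s(x,\rho_0)}$, so Lemma~\ref{lem:afHI} applies along all of $\gamma$ and gives $a_f(w)\leq C\,a_f(x)$ for every $w\in |\gamma|$. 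Integrating,
\[
d_{\varrho}(x,z)\leq \int_{\gamma}a_f\,\mathrm{d}s\leq C\,a_f(x)\,\mathrm{length}(\gamma)\leq CL\,a_f(x)\,d_s(x,z).
\]

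With this comparison in hand I would show that $B_{\varrho}(x,r)$ contains a sub-Riemannian ball of radius comparable to $r/a_f(x)$. Choosing $c(K)$ so small that $\tfrac{r}{C\,a_f(x)}\leq \rho_0$ whenever $r<c(K)\,a_f(x)\,d_s(x,\partial\Omega)$ (for which $c(K)\leq \tfrac{\lambda}{2+\lambda}$ suffices, as $C\geq 1$), the ball $B_s\bigl(x,\tfrac{r}{CL\,a_f(x)}\bigr)$ has radius at most $\rho_0/L$, so the displayed inequality gives $d_{\varrho}(x,z)<r$ for every $z$ in it, whence
\[
B_s\Bigl(x,\tfrac{r}{CL\,a_f(x)}\Bigr)\subseteq B_{\varrho}(x,r).
\]
This sub-Riemannian ball is itself admissible for Lemma~\ref{lem:afHI} (its radius is $\leq \rho_0$), so $a_f(y)\geq a_f(x)/C$ on it. Using the exact homogeneity of the Haar measure, $m(B_s(x,t))=c_0 t^4$, I would then estimate
\[
\mu_{\varrho}(B_{\varrho}(x,r))\geq \int_{B_s(x,\frac{r}{CL\,a_f(x)})}a_f^4\,\mathrm{d}m\geq \frac{a_f(x)^4}{C^4}\,m\Bigl(B_s\bigl(x,\tfrac{r}{CL\,a_f(x)}\bigr)\Bigr)=\frac{c_0}{C^4(CL)^4}\,r^4=:c_1 r^4,
\]
and combine with the upper bound from Proposition~\ref{p:a_f_conf_density} to finish.

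The main obstacle, and the point where quasiconvexity is indispensable, is guaranteeing that a near-optimal curve for $d_{\varrho}(x,z)$ can be taken inside a single Harnack ball, so that $a_f$ is comparable to $a_f(x)$ along its \emph{entire} length; without a length bound one cannot prevent a connecting curve from wandering into regions where $a_f$ is large, which would break the upper bound on $d_{\varrho}(x,z)$ and hence the inclusion $B_s(x,\cdot)\subseteq B_{\varrho}(x,r)$. The rest is bookkeeping of constants — the $\sqrt{\pi}$ from the $d_s$–$d_{\mathbb{H}^1}$ comparison, the Harnack constant $C$, the quasiconvexity constant $L$, and the Koebe constant $c_K$ (used to pass between the two admissible descriptions of the range of $r$) — all depending only on $K$ and the data of $\mathbb{H}^1$.
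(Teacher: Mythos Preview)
Your proposal is correct and follows essentially the same route as the paper's proof: both reference Proposition~\ref{p:a_f_conf_density} for the upper bound, and for the lower bound both use quasiconvexity together with the Harnack inequality (Lemma~\ref{lem:afHI}) to obtain $d_{\varrho}(x,z)\leq CL\,a_f(x)\,d_s(x,z)$, hence the inclusion $B_s\bigl(x,\tfrac{r}{CL\,a_f(x)}\bigr)\subseteq B_{\varrho}(x,r)$, and then Harnack again plus $4$-homogeneity of $m$ for the measure estimate. If anything, you are more explicit than the paper in checking that the quasiconvex curve stays inside a single Harnack ball before applying Lemma~\ref{lem:afHI} along it.
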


   \begin{proof} Let us assume that $\Omega$ is $L$-quasiconvex for some constant $L\geq 1$.
   By Proposition \ref{p:a_f_conf_density}, it suffices to prove the lower bound for $\mu_{\varrho}(B(x,r))$.
   If $c(K)$ is chosen as in the proof of Proposition \ref{p:a_f_conf_density}, that is, as in \eqref{eq:c(K)}, then we know already that the Harnack inequality for $a_f$ holds on $B_s(x,r/(Ca_f(x)L ))$. Thus, for all points $z$ in this ball, we find
   \begin{align*}
   d_{\varrho}(x,z) &= \inf_{\gamma_{xz}\subset \Omega}\int_{\gamma_{xz}}a_f\;\mathrm{d}s\leq C a_f(x) \inf_{\gamma_{xz}\subset \Omega}\int_{\gamma_{xz}}\;\mathrm{d}s\leq C a_f(x) L d_s(x,z),
   \end{align*}
   where we have used in the last step the assumption that $\Omega$ is $L$-quasiconvex. The above estimate shows that
   \begin{displaymath}
   B_s \left(x,\frac{r}{C a_f(x) L }\right) \subseteq B_{\varrho}(x,r)
   \end{displaymath}
   and hence
   \begin{displaymath}
 \frac{m(B(0,1))r^4}{C^8 L^4}   = \frac{a_f(x)^4}{C^4 }m\left(B_s\left(x,\frac{r}{C a_f(x) L}\right)\right) \leq \int_{B_s(x,r/(C a_f(x) L))} a_f^4 \;\mathrm{d}m\leq \mu_{\varrho}(B_{\varrho}(x,r)),
   \end{displaymath}
   which concludes the proof.
   \end{proof}

\bibliographystyle{plain}
\bibliography{references}

\end{document}